	\renewcommand{\geq}{\geqslant}
	\renewcommand{\leq}{\leqslant}
	\renewcommand{\phi}{\varphi}
	\providecommand{\corollaryname}{Corollary}
	\providecommand{\definitionname}{Definition}
	\providecommand{\examplename}{Example}
	\providecommand{\lemmaname}{Lemma}
	\providecommand{\propositionname}{Proposition}
	\providecommand{\remarkname}{Remark}
	\providecommand{\theoremname}{Theorem}
	\providecommand{\setupname}{Setup}
	\providecommand{\conjecturename}{Conjecture}
	\providecommand{\questionname}{Question}
	\providecommand{\claimname}{Claim}
	\theoremstyle{plain}
		\newtheorem{thm}{\protect\theoremname}[section] 
		\newtheorem{prop}[thm]{\protect\propositionname}
		\newtheorem{lem}[thm]{\protect\lemmaname}
		\newtheorem{cor}[thm]{\protect\corollaryname}
	\theoremstyle{definition}
		\newtheorem{defn}[thm]{\protect\definitionname}
		\newtheorem{example}[thm]{\protect\examplename}
		\newtheorem{setup}[thm]{\setupname}
	\theoremstyle{remark}
		\newtheorem{rem}[thm]{\protect\remarkname}
	\numberwithin{figure}{section}
	\numberwithin{equation}{section}
		\newcommand\ackname{Acknowledgements}
		\newenvironment{acknowledgements}{%
			\medskip
			\bgroup
			\list{}{\labelwidth\z@
			\leftmargin3pc \rightmargin\leftmargin
			\listparindent\normalparindent \itemindent\z@
			\parsep\z@ \@plus\p@
			
				}%
				\Small
				\item[\hskip\labelsep\scshape\ackname.]%
			}{%
			\endlist\egroup
			}
	\tikzset{commutative diagrams/.cd, 
		mysymbol/.style = {start anchor=center, end anchor = center, draw = none}}
	\newcommand{\commutes}[2][\circlearrowleft]{\arrow[mysymbol]{#2}[description]{#1}}
	\newcommand{\BE}{\mathbb{E}}
	\newcommand{\BZ}{\mathbb{Z}}
	\newcommand{\CA}{\mathcal{A}}
	\newcommand{\CB}{\mathcal{B}}
	\newcommand{\CC}{\mathcal{C}}
	\newcommand{\CT}{\mathcal{T}}
	\newcommand{\CX}{\mathcal{X}}
	\newcommand{\SF}{\mathscr{F}}
	\newcommand{\SG}{\mathscr{G}}
		\newcommand{\Ab}{\operatorname{\mathsf{Ab}}\nolimits}
		\newcommand{\skel}{\mathrm{skel}}
		\newcommand{\op}{\mathrm{op}}
		\newcommand{\iso}{\cong}
		\newcommand{\Hom}{\operatorname{Hom}\nolimits}
		\newcommand{\into}{\hookrightarrow}
		\newcommand{\onto}{\rightarrow\mathrel{\mkern-14mu}\rightarrow}
		\newcommand{\idfunc}[1]{\mathbbm{1}_{#1}}
		\newcommand{\fs}{\mathfrak{s}}
		\newcommand{\sus}{\Sigma} 
		\newcommand{\lmod}[1]{{#1}\operatorname{--\,\mathsf{mod}}\nolimits}
		\newcommand{\der}{\mathsf{D}}
		\newcommand{\kom}{\mathsf{K}}
		\newcommand{\com}{\mathsf{C}}
		\newcommand{\cone}{\mathrm{MC}}
	\newcommand{\deff}{\coloneqq}
	\newcommand{\eps}{\varepsilon}
	\newcommand{\lan}{\langle}
	\newcommand{\ran}{\rangle}
	\newcommand{\wh}[1]{\widehat{#1}}
	\newcommand\restr[2]{{\left.\kern-\nulldelimiterspace#1
						\right|_{#2}}}
\begin{document}

\title{Transport of structure in higher homological algebra}
    \author[Bennett-Tennenhaus]{Raphael Bennett-Tennenhaus}
        \address{Faculty of Mathematics\\
        Bielefeld University\\
        Universit{\"{a}}tsstra{\upshape\fontsize{6pt}{6pt}\selectfont ß}e 25\\
        33615 Bielefeld\\
        Germany}
        \email{raphaelbennetttennenhaus@gmail.com}

    \author[Shah]{Amit Shah}
        \address{School of Mathematics\\ 
        University of Leeds\\ 
        Leeds, LS2 9JT\\ 
        United Kingdom}
        \email{a.shah1@leeds.ac.uk}

\date{\today}

\keywords{Transport of structure, higher homological algebra, skeleton, $n$-exangulated category, $(n+2)$-angulated category, $n$-exact category, $n$-abelian category, $n$-exangulated functor, extriangulated functor}

\subjclass[2020]{Primary 18E05; Secondary 18E10, 18G80}


\begin{abstract}

We fill a gap in the literature regarding `transport of structure' for $(n+2)$-angulated, $n$-exact, $n$-abelian and $n$-exangulated categories appearing in (classical and higher) homological algebra. 
As an application of our main results, we show that a skeleton of one of these kinds of categories inherits the same structure in a canonical way, up to equivalence. 
In particular, it follows that a skeleton of a weak $(n+2)$-angulated category is in fact what we call a strong $(n+2)$-angulated category. When $n=1$ this clarifies a technical concern with the definition of a cluster category. We also introduce the notion of an $n$-exangulated functor between $n$-exangulated categories. 
This recovers the definition of an $(n+2)$-angulated functor when the categories concerned are $(n+2)$-angulated, and the higher analogue of an exact functor when the categories concerned are $n$-exact.

\end{abstract}

\maketitle


\section{Introduction}
`Transport of structure' refers to the situation in which an object gains structure by being isomorphic to an object with some pre-existing structure (see \cite[\S{}IV.1]{Bourbaki-elements-of-mathematics-theory-of-sets}). 
Instances of this well-known idea appear, for example, in: 
Lie theory 
\cite{Arnal-analytic-vectors-and-irreducible-representations-of-nilpotent-lie-groups-and-algebras}, 
\cite{Lusztig-cuspidal-local-systems-and-graded-hecke-algebras-I}, 
\cite{Rentschler-orbites-dans-le-spectre-primitif-de-algebre-enveloppante-une-algebre-de-Lie};
operational calculus  \cite{Buchmann-examples-of-convolution-products}; 
algebraic geometry and number theory \cite{BruynReichstein-smoothness-in-algebraic-geography}, \cite{HambletonLemmermeyer-arithmetic-of-pell-surfaces}, \cite{ManasaShankar-pell-surfaces-and-elliptic-curves}; 
the theory of concrete geometrical categories \cite{Diers-classification-of-concrete-geometrical-categories}; 
modular representation theory \cite{Yaraneri-a-filtration-of-the-modular-representation-functor}; and the theory of algebraic structures \cite{Holm-a-note-on-transport-of-algebraic-structures}. These examples involve transporting some algebraic structure across a function between sets.

Analogously, one can ask which categorical structures can be transported across equivalences. 
For example, it is well-known that any category equivalent to an additive category is also additive (see Theorem \ref{thm:transport-of-additive-structure}). Other examples found in the literature include the transport of monoidal \cite{DayStreet-Quantum-categories-star-autonomy-and-quantum-groupoids}, model \cite{Fresse-modules-over-operads-and-functors} and triangulated \cite{Chen-the-singularity-category-of-an-algebra-with-radical-square-zero} structures. 
See also 
\cite{Gelinas-contributions-to-the-stable-derived-categories-of-gorenstein-rings}, 
\cite{KellyLack-monoidal-functors-generated-by-adjunctions-with-applications-to-transport-of-structure}, 
\cite{Kunzer-thesis}.

The motivation for writing this note arose in the context of triangulated categories. 
Suppose $(\CC,\sus, \CT)$ is a triangulated category, where the suspension $\sus$ is assumed to be an automorphism of $\CC$; see Example \ref{exa:triangulated-category-as-special-case}. 
Let $\CC'$ be any category and assume $\SF\colon \CC\to \CC'$ is an equivalence. 
Suppose one wants to find an automorphism $\Sigma'$ of $\CC'$ (and a collection $\CT'$ of triangles in $\CC'$), such that $(\CC,\Sigma',\CT')$ is a triangulated
category and that $\SF$ is a triangle equivalence. 
A canonical choice for $\sus'$ is $\SF\sus\SG$, where $\SG\colon \CC'\to\CC$ is a quasi-inverse for $\SF$. Unfortunately, when one tries to prove this choice is an automorphism of $\CC'$, one encounters problems that seem difficult (or, perhaps, impossible) to resolve. Bernhard Keller pointed out that it is more natural to ask only that $\sus$ is an autoequivalence in the definition of a triangulated category, and we call these `weak' triangulated categories here in order to make a distinction; see Definition \ref{def:weak-pre-and-n-angulated-category}. 
After this modification, by which nothing is lost (see \cite[\S{}2]{KellerVossieck-sous-les-categories-derivees}), we can obtain a clean `transport of triangulated structure' statement. 
Although well-known, we could not find the details of this claim. 
Theorem \ref{thm:transport-of-n-angulated-structure} fills this gap in the literature.

Recently, homological algebra has seen the introduction of \emph{higher (dimensional)} analogues of categories that have been a core part of the classical theory for many years. 
Let $n\geq 1$ be a positive integer. 
The notion of an \emph{$(n+2)$-angulated} category (see Definition \ref{def:pre-and-n-angulated-category}) was introduced in \cite{GeissKellerOppermann-n-angulated-categories}, in a way so that one recovers the definition of a triangulated category by setting $n=1$. 
Similarly, higher versions of exact and abelian categories were introduced in \cite{Jasso-n-abelian-and-n-exact-categories}; see Definitions \ref{def:n-exact-category} and \ref{def:n-abelian-category}. 
Parallel to this, Nakaoka and Palu defined an \emph{extriangulated} category, which is a simultaneous generalisation of a triangulated category and an exact category; see \cite{NakaokaPalu-extriangulated-categories-hovey-twin-cotorsion-pairs-and-model-structures}. 
And, naturally, a higher analogue, called an \emph{$n$-exangulated} category, of an extriangulated category has since been introduced in \cite{HerschendLiuNakaoka-n-exangulated-categories-I-definitions-and-fundamental-properties}; see Definition \ref{def:n-exangulated-category}.

As a consequence of these developments, we include transport of structure results for $(n+2)$-angulated, $n$-exact, $n$-abelian and $n$-exangulated categories. That is, we show that one can transport these categorical structures across equivalences; see Theorems \ref{thm:transport-of-n-angulated-structure}, \ref{thm:transport-of-n-exact-structure}, \ref{thm:transport-of-n-abelian-structure} and \ref{thm:transport-of-n-exangulated-structure}. 
Moreover, in order to state the latter three results in the same fashion as Theorem \ref{thm:transport-of-n-angulated-structure}, we introduce the notion of an \emph{$n$-exact} functor (see Definition \ref{def:n-exact-functor}) and an \emph{$n$-exangulated} functor (see Definition \ref{def:n-exangulated-functor}). 
We show in Theorem \ref{thm:n+2-angulated-functor-iff-n-exangulated} that a (covariant) functor between $(n+2)$-angulated categories is an $n$-exangulated functor if and only if it is an \emph{$(n+2)$-angulated} functor, in the sense of Definition \ref{def:n-angulated-functor}. 
Similarly, in Theorem \ref{thm:n-exact-functor-iff-n-exangulated} we show that a functor between $n$-exact categories is an $n$-exangulated functor if and only if it is an $n$-exact functor.

An application of Theorem \ref{thm:transport-of-n-exact-structure} 
(respectively, Theorem \ref{thm:transport-of-n-abelian-structure}, Theorem \ref{thm:transport-of-n-exangulated-structure}) shows that any skeleton of an $n$-exact (respectively, $n$-abelian, $n$-exangulated) category is again $n$-exact (respectively, $n$-abelian, $n$-exangulated). 
Furthermore, in the $(n+2)$-angulated case we deduce a stronger conclusion: we show that each skeleton of a \emph{weak} $(n+2)$-angulated category (see Definition \ref{def:weak-pre-and-n-angulated-category}) is what we call a \emph{strong} $(n+2)$-angulated category (i.e. with an $(n+2)$-suspension functor that is an automorphism); see Corollary \ref{cor:transport-of-n-angulated-structure-to-skeleton}. 
In this case, the natural choice for the $(n+2)$-suspension functor of the skeleton is an automorphism; see Proposition \ref{prop:equivalence-between-skeletal-categories-is-an-isomorphism}.

This paper is organised as follows. In \S\ref{sec:higher-structures} we recall the definitions of the categories from higher homological algebra that we work with in the sequel. 
In particular, we propose a definition for an $n$-exact functor in \S\ref{sec:n-exact-n-abelian-categories} and an $n$-exangulated functor in \S\ref{sec:n-exangulated-categories}.
Moreover, we show that an $n$-exangulated functor simultaneously generalises the notions of an $(n+2)$-angulated functor and an $n$-exact functor. 
In \S\ref{sec:transport-of-structure} we prove our main results on transport of structure. In \S\ref{sec:special-case-skeletal-categories} we provide an improvement of transport of $(n+2)$-angulated structure when transporting the structure to a skeletal category. 
We also explain how our results from \S\ref{sec:special-case-skeletal-categories} may be used to show how the definition of a cluster category is made rigorous.


\section{Higher structures}
\label{sec:higher-structures}

Throughout \S\ref{sec:higher-structures}, let $n\geq 1$ be a positive integer and let $\CC$ be an additive category.
In \S\S\ref{sec:n-angulated-categories}--\ref{sec:n-exact-n-abelian-categories} we recall the higher analogues of the classical (i.e. $n=1$) theory, and in \S\ref{sec:n-exangulated-categories} we recall the more recent theory of $n$-exangulated categories.


\subsection{\texorpdfstring{$(n+2)$}{n+2}-angulated categories}\label{sec:n-angulated-categories}

We assume throughout \S\ref{sec:n-angulated-categories} that $\CC$ is equipped with an autoequivalence $\sus\colon\CC \to \CC$. 
We recall the definition of an $(n+2)$-angulated category (see Definition \ref{def:pre-and-n-angulated-category}), which was introduced in \cite{GeissKellerOppermann-n-angulated-categories} and we follow the exposition therein.

\begin{rem}

Definitions \ref{def:n+2-sus-sequences}--\ref{def:weak-pre-and-n-angulated-category} below are essentially \cite[Def.\ 2.1]{GeissKellerOppermann-n-angulated-categories}. However, there is one key difference. In \cite{GeissKellerOppermann-n-angulated-categories} the endofunctor $\sus\colon\CC\to\CC$ is assumed to be an automorphism, whereas here we only assume that $\sus$ is an autoequivalence. 

\end{rem}

\begin{defn}
\label{def:n+2-sus-sequences}

A sequence of morphisms of the form
$\begin{tikzcd}[column sep=0.7cm]
X^{0}\arrow{r}{d_{X}^{0}}&X^{1}\arrow{r}{d_{X}^{1}}&\cdots\arrow{r}{d_{X}^{n}}&X^{n+1}\arrow{r}{d_{X}^{n+1}}&\sus X^{0}\end{tikzcd}$ in $\CC$ is called an \emph{$(n+2)$-$\sus$-sequence}.

\end{defn}

\begin{defn}
\label{def:n+2-sus-morphisms}

A \emph{morphism of $(n+2)$-$\sus$-sequences} from 
\[
\begin{tikzcd}[column sep=0.7cm]X^{0}\arrow{r}{d_{X}^{0}}&X^{1}\arrow{r}{d_{X}^{1}}&\cdots\arrow{r}{d_{X}^{n}}&X^{n+1}\arrow{r}{d_{X}^{n+1}}&\sus X^{0}\end{tikzcd}
\]
to 
\[
\begin{tikzcd}[column sep=0.7cm]Y^{0}\arrow{r}{d_{Y}^{0}}&Y^{1}\arrow{r}{d_{Y}^{1}}&\cdots\arrow{r}{d_{Y}^{n}}&Y^{n+1}\arrow{r}{d_{Y}^{n+1}}&\sus Y^{0}\end{tikzcd}
\]
is a tuple $(f^{0},\ldots, f^{n+1})$ of morphisms $f^{i}\colon X^{i} \to Y^{i}$ in $\CC$ such that 
\[
\begin{tikzcd}[column sep=0.7cm]
X^{0}\arrow{r}{d_{X}^{0}}\arrow{d}{f^{0}}&X^{1}\arrow{r}{d_{X}^{1}}\arrow{d}{f^{1}}&\cdots\arrow{r}{d_{X}^{n}}&X^{n+1}\arrow{r}{d_{X}^{n+1}}\arrow{d}{f^{n+1}}&\sus X^{0}\arrow{d}{\sus f^{0}}\\
Y^{0}\arrow{r}{d_{Y}^{0}}&Y^{1}\arrow{r}{d_{Y}^{1}}&\cdots\arrow{r}{d_{Y}^{n}}&Y^{n+1}\arrow{r}{d_{Y}^{n+1}}&\sus Y^{0}
\end{tikzcd}
\]
commutes in $\CC$.

\end{defn}

\begin{defn}
\label{def:weak-pre-and-n-angulated-category}

Recall that $n\geq 1$ is a positive integer and $\CC$ is an additive category equipped with an autoequivalence $\sus$. 
Let $\CT$ be a class of $(n+2)$-$\sus$-sequences in $\CC$. Then we call $(\CC,\sus,\CT)$ a \emph{weak pre-$(n+2)$-angulated category} if the following axioms are satisfied.
\begin{enumerate}[(F1)]

    \item \label{F1}
        \begin{enumerate}[(a)]
        
            \item The class $\CT$ is closed under finite taking direct sums and direct summands.
            \item For each $X\in\CC$, the $(n+2)$-$\sus$-sequence
            $\begin{tikzcd}[column sep=0.6cm]
            X \arrow{r}{1_{X}}& X \arrow{r}{}& 0\arrow{r}{} & \cdots \arrow{r}{}& 0\arrow{r}{} & \sus X
            \end{tikzcd}$ is in $\CT$.
            \item For each morphism $d_{X}^{0}$  there is an $(n+2)$-$\sus$-sequence in $\CT$ whose first morphism is $d_{X}^{0}$.
            
        \end{enumerate}
    \item \label{F2}An $(n+2)$-$\sus$-sequence $\begin{tikzcd}[column sep=0.7cm]X^{0}\arrow{r}{d_{X}^{0}}&X^{1}\arrow{r}{d_{X}^{1}}&\cdots\arrow{r}{d_{X}^{n}}&X^{n+1}\arrow{r}{d_{X}^{n+1}}&\sus X^{0}\end{tikzcd}$ lies in $\CT$ if and only if its \emph{left rotation} 
    \begin{center}
    $
    \begin{tikzcd}[column sep=1.6cm] X^{1}\arrow{r}{d_{X}^{1}}&X^{2}\arrow{r}{d_{X}^{2}}&\cdots\arrow{r}{d_{X}^{n}}&X^{n+1}\arrow{r}{d_{X}^{n+1}}&\sus X^{0}\arrow{r}{(-1)^{n}\sus d_{X}^{0}}&\sus X^{1}\end{tikzcd}
    $
    \end{center}
    lies in $\CT$.
    \item \label{F3}Each commutative diagram 
    \begin{equation}\label{eqn:F3-diagram-1}
    \begin{tikzcd}[column sep=0.7cm]
    X^{0}\arrow{r}{d_{X}^{0}}\arrow{d}{f^{0}}&X^{1}\arrow{r}{d_{X}^{1}}\arrow{d}{f^{1}}&X^{2}\arrow{r}{d_{X}^{2}}&\cdots\arrow{r}{d_{X}^{n}}&X^{n+1}\arrow{r}{d_{X}^{n+1}}&\sus X^{0}\arrow{d}{\sus f^{0}}\\
    Y^{0}\arrow{r}{d_{Y}^{0}}&Y^{1}\arrow{r}{d_{Y}^{1}}&Y^{2}\arrow{r}{d_{Y}^{2}}&\cdots\arrow{r}{d_{Y}^{n}}&Y^{n+1}\arrow{r}{d_{Y}^{n+1}}&\sus Y^{0}
    \end{tikzcd}
    \end{equation}
    with rows in $\CT$ can be completed to a morphism
    \begin{equation}\label{eqn:F3-diagram-2}
    \begin{tikzcd}[column sep=0.7cm]
    X^{0}\arrow{r}{d_{X}^{0}}\arrow{d}{f^{0}}&X^{1}\arrow{r}{d_{X}^{1}}\arrow{d}{f^{1}}&X^{2}\arrow{r}{d_{X}^{2}}\arrow[dotted]{d}{f^{2}}&\cdots\arrow{r}{d_{X}^{n}}&X^{n+1}\arrow{r}{d_{X}^{n+1}}\arrow[dotted]{d}{f^{n+1}}&\sus X^{0}\arrow{d}{\sus f^{0}}\\
    Y^{0}\arrow{r}{d_{Y}^{0}}&Y^{1}\arrow{r}{d_{Y}^{1}}&Y^{2}\arrow{r}{d_{Y}^{2}}&\cdots\arrow{r}{d_{Y}^{n}}&Y^{n+1}\arrow{r}{d_{Y}^{n+1}}&\sus Y^{0}
    \end{tikzcd}
    \end{equation}
    of $(n+2)$-$\sus$-sequences.
    
\end{enumerate}

In this case, we say: $\CT$ is a \emph{pre-$(n+2)$-angulation}; the members of $\CT$ are \emph{$(n+2)$-angles}; and $\Sigma$ is an \emph{$(n+2)$-suspension}. 

Moreover, if $(\CC,\sus,\CT)$ satisfies \ref{F1}--\ref{F3} and \ref{F4} below, then we call $(\CC,\sus,\CT)$ a \emph{weak $(n+2)$-angulated category}. In this case, we refer to $\CT$ as an \emph{$(n+2)$-angulation}.
\begin{enumerate}[(F1)]

\setcounter{enumi}{3}
    \item\label{F4} In the situation of \ref{F3} above, \eqref{eqn:F3-diagram-1} can be completed to 
    \eqref{eqn:F3-diagram-2} using morphisms $f^{2},\ldots,f^{n+1}$ such that the \emph{cone} $C^{\raisebox{0.5pt}{\scalebox{0.6}{$\bullet$}}}=C(f)^{\raisebox{0.5pt}{\scalebox{0.6}{$\bullet$}}}$ lies in $\CT$, where $C^{\raisebox{0.5pt}{\scalebox{0.6}{$\bullet$}}}$ is the $(n+2)$-$\sus$-sequence
	\begin{center}
    	$
	\begin{tikzcd}
X^{1}\oplus Y^{0} \arrow{r}{d_{C}^{0}}&X^{2}\oplus Y^{1}\arrow{r}{d_{C}^{1}}& \cdots\arrow{r}{d_{C}^{n}}&\sus X^{0}\oplus Y^{n+1} \arrow{r}{d_{C}^{n+1}}&\sus X^{1}\oplus \sus Y^{0},
	\end{tikzcd}
	$
	\end{center}
in which 
	\begin{center}
    	$
d_{C}^{i}\deff
\begin{pmatrix}-d_{X}^{i+1} & 0\\ f^{i+1} & d_{Y}^{i} \end{pmatrix},
    	$
	\end{center}
for $i\in\{0,\ldots,n+1\}$, where  $d_{X}^{n+2}\deff \sus d_{X}^{0}$ and $f^{n+2}\deff \sus f^{0}$.

\end{enumerate}

\end{defn}

The definition of a (pre-)$(n+2)$-angulated category, as originally given in \cite{GeissKellerOppermann-n-angulated-categories}, is the following.

\begin{defn}
\label{def:pre-and-n-angulated-category}

\cite[Def.\ 2.1]{GeissKellerOppermann-n-angulated-categories}
Let $(\CC,\sus,\CT)$ be a weak (pre-)$(n+2)$-angulated category. We call $(\CC,\sus,\CT)$ a \emph{(pre-)$(n+2)$-angulated category} if $\sus$ is an automorphism of $\CC$.
\end{defn}

Sometimes will use the adjective `strong' for a (pre-)$(n+2)$-angulated category (in the sense of Definition \ref{def:pre-and-n-angulated-category} above) in order to highlight that the $(n+2)$-suspension functor is an automorphism. 
As is normal practice, we will usually just talk of a (weak) (pre-)$(n+2)$-angulated category $\CC$ and assume the existence of $\CT$ and $\sus$ is understood.

\begin{example}
\label{exa:triangulated-category-as-special-case}

With $n=1$, an $(n+2)$-angulated category (in the sense of Definition \ref{def:pre-and-n-angulated-category}) is the same as a triangulated category in the sense of \cite[Def.\ 3.1]{HolmJorgensen-tri-cats-intro}. See \cite[Def.\ 3.1, Def.\ 3.2, pp.\ 246--248]{Neeman-Some-new-axioms-for-triangulated-categories}.

\end{example}

\begin{defn}\label{def:n-angulated-functor}
\cite[\S{}4]{BerghThaule-the-grothendieck-group-of-an-n-angulated-caegory}
Let $(\CC,\sus,\mathcal{T})$ and $(\CC',\sus',\mathcal{T}')$
be weak (pre-)$(n+2)$-angulated categories. An additive covariant functor $\SF\colon\CC\to\CC'$ is called an \emph{$(n+2)$-angulated} functor if there exists a natural isomorphism 
$\Theta = \{\Theta_{X}\}_{X\in\CC} \colon \SF \circ \sus \overset{\iso}{\Longrightarrow} \sus'\circ \SF$, such that 
if 
\[
\begin{tikzcd}[column sep=0.7cm]X^{0}\arrow{r}{d_{X}^{0}}&X^{1}\arrow{r}{d_{X}^{1}}&\cdots\arrow{r}{d_{X}^{n}}&X^{n+1}\arrow{r}{d_{X}^{n+1}}&\sus X^{0}\end{tikzcd}
\]
is an $(n+2)$-angle in $\CC$, then 
\[
\begin{tikzcd}[column sep=1.7cm] \SF X^{0}\arrow{r}{\SF d_{X}^{0}}&\SF X^{1}\arrow{r}{\SF d_{X}^{1}}&\cdots\arrow{r}{\SF d_{X}^{n}}&\SF X^{n+1}\arrow{r}{\Theta_{X^{0}}\circ\SF d_{X}^{n+1}}&\sus' \SF X^{0}\end{tikzcd}
\]
is an $(n+2)$-angle in $\CC'$.

\end{defn}

We remark that $(n+2)$-angulated functors as we refer to them here were called `exact' functors in \cite{Jasso-n-abelian-and-n-exact-categories}. The last piece of terminology we need for this setting is the following.

\begin{defn} 

An $(n+2)$-angulated functor $\SF\colon\CC\to\CC'$ is called an \emph{$(n+2)$-angle equivalence} if it is an equivalence of categories. 

\end{defn}


\medskip
\subsection{\texorpdfstring{$n$}{n}-exact and \texorpdfstring{$n$}{n}-abelian categories}
\label{sec:n-exact-n-abelian-categories}

In \S\ref{sec:n-exact-n-abelian-categories} we follow \cite{Jasso-n-abelian-and-n-exact-categories} in recalling some prerequisite concepts from higher homological algebra and the definitions of an $n$-exact category and an $n$-abelian category (see Definitions \ref{def:n-exact-category} and \ref{def:n-abelian-category}, respectively). 
At the end of \S\ref{sec:n-exact-n-abelian-categories} we define an $n$-exact functor (see Definition \ref{def:n-exact-functor}). 
We denote by $\Ab$ the category of all abelian groups.

\begin{defn}\cite[Def.\ 2.2]{Jasso-n-abelian-and-n-exact-categories}
Let $d_{X}^{n}\colon X^{n}\to X^{n+1}$ be a morphism in $\CC$. An \emph{$n$-kernel} of $d_{X}^{n}$ is a sequence
\[
\begin{tikzcd}
(d_{X}^{0},\ldots,d_{X}^{n-1})\colon\phantom{ll}X^{0}\arrow{r}{d_{X}^{0}}& X^{1} \arrow{r}{d_{X}^{1}}& \cdots \arrow{r}{d_{X}^{n-1}}& X^{n}
\end{tikzcd}
\]
of composable morphisms in $\CC$, such that for all $Z\in\CC$ the functor $\CC(Z,-)$ yields an exact sequence
\[
\begin{tikzcd}[column sep=1.3cm]
0 \arrow{r}& \CC(Z,X^{0}) \arrow{r}{d_{X}^{0}\circ -}& \CC(Z,X^{1})\arrow{r}{d_{X}^{1}\circ -}& \cdots \arrow{r}{d_{X}^{n-1}\circ -}& \CC(Z,X^{n})\arrow{r}{d_{X}^{n}\circ -}& \CC(Z,X^{n+1})
\end{tikzcd}
\]
in $\Ab$. 
An \emph{$n$-cokernel} is defined dually.
\end{defn}

Let $\com_{\CC}$ denote the \emph{category of (cochain) complexes} in $\CC$. 
We denote by $\com^{n}_{\CC}$ the full subcategory of $\com_{\CC}$ consisting of complexes which are concentrated in degrees $0,\ldots,n+1$. We denote a complex $X^{\raisebox{0.5pt}{\scalebox{0.6}{$\bullet$}}}\in\com^{n}_{\CC}$ by 
\[
\begin{tikzcd}
X^{0}\arrow{r}{d_{X}^{0}}& X^{1} \arrow{r}{d_{X}^{1}}& \cdots\arrow{r}{d_{X}^{n-1}}&X^{n}\arrow{r}{d_{X}^{n}}& X^{n+1}.
\end{tikzcd}
\]
\begin{defn}\cite[Def.\ 2.4]{Jasso-n-abelian-and-n-exact-categories} 
An \emph{$n$-exact sequence} in $\CC$ is a complex $X^{\raisebox{0.5pt}{\scalebox{0.6}{$\bullet$}}}\in\com^{n}_{\CC}$, 
such that $(d_{X}^{0},\ldots,d_{X}^{n-1})$ is an $n$-kernel of $d_{X}^{n}$ and $(d_{X}^{1},\ldots,d_{X}^{n})$ is an $n$-cokernel of $d_{X}^{0}$.
\end{defn}

\begin{defn}

\cite[Def.\ 2.9]{Jasso-n-abelian-and-n-exact-categories} 
Let $X^{\raisebox{0.5pt}{\scalebox{0.6}{$\bullet$}}},Y^{\raisebox{0.5pt}{\scalebox{0.6}{$\bullet$}}}\in\com^{n}_{\CC}$ be $n$-exact sequences. A \emph{morphism $f^{\raisebox{0.5pt}{\scalebox{0.6}{$\bullet$}}}\colon X^{\raisebox{0.5pt}{\scalebox{0.6}{$\bullet$}}}\to Y^{\raisebox{0.5pt}{\scalebox{0.6}{$\bullet$}}}$ of $n$-exact sequences} is just a morphism $f^{\raisebox{0.5pt}{\scalebox{0.6}{$\bullet$}}}\in\com^{n}_{\CC}(X^{\raisebox{0.5pt}{\scalebox{0.6}{$\bullet$}}},Y^{\raisebox{0.5pt}{\scalebox{0.6}{$\bullet$}}})$ of complexes.

\end{defn}

\begin{defn}
\label{def:mapping-cone-as-in-Jasso}

\cite[Def.\ 2.11]{Jasso-n-abelian-and-n-exact-categories}
Let $f^{\raisebox{0.5pt}{\scalebox{0.6}{$\bullet$}}}\colon X^{\raisebox{0.5pt}{\scalebox{0.6}{$\bullet$}}}\to Y^{\raisebox{0.5pt}{\scalebox{0.6}{$\bullet$}}}$ be a morphism of complexes in $\com^{n-1}_{\CC}$. The \emph{mapping cone} $C^{\raisebox{0.5pt}{\scalebox{0.6}{$\bullet$}}}\deff \cone(f)^{\raisebox{0.5pt}{\scalebox{0.6}{$\bullet$}}}$ is the complex
\[
\begin{tikzcd}
X^{0}\arrow{r}{d_{C}^{-1}}& X^{1}\oplus Y^{0} \arrow{r}{d_{C}^{0}}&X^{2}\oplus Y^{1}\arrow{r}{d_{C}^{1}}& \cdots\arrow{r}{d_{C}^{n-2}}&X^{n}\oplus Y^{n-1} \arrow{r}{d_{C}^{n-1}}& Y^{n}
\end{tikzcd}
\]
in $\com^{n}_{\CC}$, where $d_{C}^{-1}\deff \begin{psmallmatrix}-d_{X}^{0}\\f^{0} \end{psmallmatrix}$, 
$d_{C}^{n-1}\deff(\,f^{n}\;\;\, d_{Y}^{n-1})$, 
and  
\[
d_{C}^{i}\deff
\begin{pmatrix}-d_{X}^{i+1} & 0\\f^{i+1} & d_{Y}^{i} \end{pmatrix}
\]
for $i\in\{0,\ldots,n-2\}$.

\end{defn}

\begin{defn}

\cite[Def.\ 2.11]{Jasso-n-abelian-and-n-exact-categories}
Let $X^{\raisebox{0.5pt}{\scalebox{0.6}{$\bullet$}}}\in\com^{n-1}_{\CC}$ be a complex. Suppose $g^{n}\colon Z^{n}\to X^{n}$ is a morphism in $\CC$. An \emph{$n$-pullback of $(d_{X}^{0},\ldots,d_{X}^{n-1})$ along $g^{n}$} is a morphism of complexes
\[
\begin{tikzcd}
Z^{0}\arrow{r}{d^{0}_{Z}}\arrow{d}{g^{0}}& Z^{1} \arrow{r}{d^{1}_{Z}}\arrow{d}{g^{1}}& \cdots\arrow{r}{d^{n-2}_{Z}}&Z^{n-1}\arrow{r}{d^{n-1}_{Z}}\arrow{d}{g^{n-1}}& Z^{n}\arrow{d}{g^{n}}\\
X^{0}\arrow{r}[swap]{d^{0}_{X}}& X^{1} \arrow{r}[swap]{d^{1}_{X}}& \cdots\arrow{r}[swap]{d^{n-2}_{X}}&X^{n-1}\arrow{r}[swap]{d^{n-1}_{X}}& X^{n},
\end{tikzcd}
\]
such that in the mapping cone
\[
\begin{tikzcd}[column sep=0.6cm]
C^{\raisebox{0.5pt}{\scalebox{0.6}{$\bullet$}}}\deff \cone(g)^{\raisebox{0.5pt}{\scalebox{0.6}{$\bullet$}}}\colon&Z^{0}\arrow{r}{d_{C}^{-1}}& Z^{1}\oplus X^{0} \arrow{r}{d_{C}^{0}}&Z^{2}\oplus X^{1}\arrow{r}{d_{C}^{1}}& \cdots\arrow{r}{d_{C}^{n-2}}&Z^{n}\oplus X^{n-1} \arrow{r}{d_{C}^{n-1}}& X^{n}
\end{tikzcd}
\]
the sequence $(d_{C}^{-1},\ldots,d_{C}^{n-2})$ is an $n$-kernel of $d_{C}^{n-1}$. 
An \emph{$n$-pushout} is defined dually.

\end{defn}

\begin{defn}

\cite[\S4.1]{Jasso-n-abelian-and-n-exact-categories}
Fix a class $\CX$ of $n$-exact sequences in $\CC$. The members of $\CX$ are called \emph{$\CX$-admissible $n$-exact sequences}. If $X^{\raisebox{0.5pt}{\scalebox{0.6}{$\bullet$}}}$
is an $\CX$-admissible $n$-exact sequence, then $d_{X}^{0}\colon X^{0}\rightarrowtail X^{1}$ is called an \emph{$\CX$-admissible monomorphism} and 
$d_{X}^{n}\colon X^{n}\onto X^{n+1}$ is called an \emph{$\CX$-admissible epimorphism}.

\end{defn}

\begin{defn}

\cite[Def.\ 4.1]{Jasso-n-abelian-and-n-exact-categories}
Let $f^{\raisebox{0.5pt}{\scalebox{0.6}{$\bullet$}}}\colon X^{\raisebox{0.5pt}{\scalebox{0.6}{$\bullet$}}}\to Y^{\raisebox{0.5pt}{\scalebox{0.6}{$\bullet$}}}$ be a morphism of $n$-exact sequences in $\com^{n}_{\CC}$. If there exists $i\in\{0,1,\ldots,n+1\}$ such that $f^{i}$ and $f^{i+1}$ (where $f^{n+2}\deff f^{0}$) are isomorphisms, then we call $f^{\raisebox{0.5pt}{\scalebox{0.6}{$\bullet$}}}$ a \emph{weak isomorphism}, and we say $X^{\raisebox{0.5pt}{\scalebox{0.6}{$\bullet$}}}$ and $Y^{\raisebox{0.5pt}{\scalebox{0.6}{$\bullet$}}}$ are \emph{weakly isomorphic}.

\end{defn}

We are now able to recall the definitions of an $n$-exact category and an $n$-abelian category.

\begin{defn}
\label{def:n-exact-category}

\cite[Def.\ 4.2]{Jasso-n-abelian-and-n-exact-categories}
Let $n\geq 1$ be a positive integer and let $\CC$ be an additive category. Suppose $\CX$ is a class of $n$-exact sequences in $\CC$ that is closed under weak isomorphisms. The pair $(\CC,\CX)$ forms an \emph{$n$-exact category} if the following axioms are satisfied.
\begin{enumerate}[label=($n$-E\arabic*),wide=0pt, leftmargin=43pt, labelwidth=43pt, labelsep=0pt, align=left]
\setcounter{enumi}{-1}

    \item\label{nE0}
    The sequence $0\rightarrowtail 0 \to \cdots \to 0\onto 0$ is an $\CX$-admissible $n$-exact sequence.
    \item\label{nE1}
    The class of $\CX$-admissible monomorphisms is closed under composition.
    \item[($n$-E$1^{\op}$)]\label{nE1op}
    The class of $\CX$-admissible epimorphisms is closed under composition.
    \item\label{nE2}
    For each $\CX$-admissible $n$-exact sequence $X^{\raisebox{0.5pt}{\scalebox{0.6}{$\bullet$}}}$ and each morphism $f^{0}\colon X^{0}\to Y^{0}$, there exists an $n$-pushout
    \begin{center}
    	$
    \begin{tikzcd}
    X^{0}\arrow[tail]{r}{d^{0}_{X}}\arrow{d}{f^{0}}& X^{1} \arrow{r}{d^{1}_{X}}\arrow[dotted]{d}{f^{1}}& \cdots\arrow{r}{d_{X}^{n-2}}&X^{n-1}\arrow{r}{d^{n-1}_{X}}\arrow[dotted]{d}{f^{n-1}}&X^{n}\arrow[two heads]{r}{d^{n}_{X}}\arrow[dotted]{d}{f^{n}}& X^{n+1}\\
    Y^{0}\arrow[dotted]{r}[swap]{d^{0}_{Y}}& Y^{1} \arrow[dotted]{r}[swap]{d^{1}_{Y}}& \cdots\arrow[dotted]{r}[swap]{d_{Y}^{n-2}}&Y^{n-1}\arrow[dotted]{r}[swap]{d^{n-1}_{Y}}&Y^{n}
    \end{tikzcd}
    $
    \end{center}
    of $(d^{0}_{X},\ldots,d^{n-1}_{X})$ along $f^{0}$ such that $d^{0}_{Y}$ is an $\CX$-admissible monomorphism.
    \item[($n$-E$2^{\op}$)]\label{nE2op}
    Dual of \ref{nE2}.

\end{enumerate}

\end{defn}


\begin{defn}
\label{def:n-abelian-category}

\cite[Def.\ 3.1]{Jasso-n-abelian-and-n-exact-categories}
Let $n\geq 1$ be a positive integer. An additive category $\CC$ is called \emph{$n$-abelian} if the following axioms are satisfied.
\begin{enumerate}[label=($n$-A\arabic*),wide=0pt, leftmargin=43pt, labelwidth=43pt, labelsep=0pt, align=left]
\setcounter{enumi}{-1}

    \item\label{nA0}
    The category $\CC$ has split idempotents.
    \item\label{nA1}
    Every morphism in $\CC$ has an $n$-kernel and an $n$-cokernel.
    \item\label{nA2}
    For each monomorphism $d_{X}^{0}\colon X^{0} \into X^{1}$ in $\CC$ and every $n$-cokernel $(d_{X}^{1},\ldots,d_{X}^{n})$ of $d_{X}^{0}$, the sequence
    \begin{center}
    	$
    \begin{tikzcd}X^{0}\arrow{r}{d_{X}^{0}}& X^{1} \arrow{r}{d_{X}^{1}}& \cdots\arrow{r}{d_{X}^{n-1}}&X^{n}\arrow{r}{d_{X}^{n}}& X^{n+1}\end{tikzcd}
    	$
     \end{center}
    is $n$-exact.
    \item[($n$-A$2^{\op}$)]\label{nA2op}
    Dual of \ref{nA2}.

\end{enumerate}

\end{defn}

It was shown in \cite{Jasso-n-abelian-and-n-exact-categories} that every $n$-abelian category is $n$-exact; see \cite[Thm.\ 4.4]{Jasso-n-abelian-and-n-exact-categories}. 

For an additive covariant functor $\SF\colon \CA\to \CB$ between additive categories, we denote by $\SF_{\com}\colon \com_{\CA}\to \com_{\CB}$ the induced additive covariant functor between the categories of complexes. 
More precisely, for complexes $X^{\raisebox{0.5pt}{\scalebox{0.6}{$\bullet$}}},Y^{\raisebox{0.5pt}{\scalebox{0.6}{$\bullet$}}}\in\com_{\CA}$ and any morphism $f^{\raisebox{0.5pt}{\scalebox{0.6}{$\bullet$}}}\in\com_{\CA}(X^{\raisebox{0.5pt}{\scalebox{0.6}{$\bullet$}}},Y^{\raisebox{0.5pt}{\scalebox{0.6}{$\bullet$}}})$, the complex 
$\SF_{\com}(X^{\raisebox{0.5pt}{\scalebox{0.6}{$\bullet$}}})\in\com_{\CB}$ is 
\[
\begin{tikzcd}
\cdots \arrow{r} & \SF X^{-1} \arrow{r}{\SF d_{X}^{-1}} & \SF X^{0} \arrow{r}{\SF d_{X}^{0}} &\SF X^{1} \arrow{r}{\SF d_{X}^{1}} &\cdots
\end{tikzcd}
\] 
and the morphism $\SF_{\com}(f^{\raisebox{0.5pt}{\scalebox{0.6}{$\bullet$}}})\in\com_{\CB}(\SF_{\com}X^{\raisebox{0.5pt}{\scalebox{0.6}{$\bullet$}}},\SF_{\com}Y^{\raisebox{0.5pt}{\scalebox{0.6}{$\bullet$}}})$ is given by the commutative diagram 
\[
\begin{tikzcd}[column sep=1.3cm]
\cdots \arrow{r} & \SF X^{-1} \arrow{r}{\SF d_{X}^{-1}} \arrow{d}{\SF f^{-1}}& \SF X^{0} \arrow{r}{\SF d_{X}^{0}} \arrow{d}{\SF f^{0}}&\SF X^{1} \arrow{r}{\SF d_{X}^{1}} \arrow{d}{\SF f^{1}}&\cdots\\
 \cdots \arrow{r} & \SF Y^{-1} \arrow{r}{\SF d_{Y}^{-1}} & \SF Y^{0} \arrow{r}{\SF d_{Y}^{0}} &\SF Y^{1} \arrow{r}{\SF d_{Y}^{1}} &\cdots .
\end{tikzcd}
\]

Now we introduce the higher version of an exact functor between exact categories; see \cite[Def.\ 5.1]{Buhler-exact-categories}. 

\begin{defn}
\label{def:n-exact-functor}

Let $(\CC, \CX)$ and $(\CC', \CX')$ be $n$-exact categories. An additive covariant functor $\SF\colon \CC\to\CC'$ is called an \emph{$n$-exact} functor if it sends $\CX$-admissible $n$-exact sequences to $\CX'$-admissible $n$-exact sequences, i.e. $\SF_{\com}(\CX)\subseteq \CX'$. If $\SF$ is $n$-exact and an equivalence, then we call $\SF$ an \emph{$n$-exact equivalence}.

\end{defn}

\begin{rem}

We note here that the terminology `$n$-exact functor' has appeared in \cite[\S{}4.1]{Luo-homological-algebra-in-n-abelian-categories}. 
However, in \cite{Luo-homological-algebra-in-n-abelian-categories} an `$n$-exact functor' is an additive covariant functor from an $n$-abelian category to an \emph{abelian} category that maps $n$-exact sequences to exact sequences. On the other hand, what we call an $n$-exact functor is a direct generalisation of an exact functor (between exact categories).

\end{rem}


\medskip
\subsection{\texorpdfstring{$n$}{n}-exangulated categories}
\label{sec:n-exangulated-categories}

In \S\ref{sec:n-exangulated-categories} we follow \cite[\S2]{HerschendLiuNakaoka-n-exangulated-categories-I-definitions-and-fundamental-properties} in order to recall the definition of an $n$-exangulated category (see Definition \ref{def:n-exangulated-category}). 
We also recall how one can view $(n+2)$-angulated and $n$-exact categories as $n$-exangulated categories (see Examples \ref{exa:n+2-angulated-category-is-n-exangulated} and \ref{exa:n-exact-category-is-n-exangulated}, respectively). 
Lastly, we define an $n$-exangulated functor (see Definition \ref{def:n-exangulated-functor}), and we relate this notion to $(n+2)$-angulated functors and $n$-exact functors (see Theorems \ref{thm:n+2-angulated-functor-iff-n-exangulated} and \ref{thm:n-exact-functor-iff-n-exangulated}, respectively).

\begin{setup}
\label{ass1}

We assume $\CC$ comes equipped with a biadditive functor $\BE\colon\CC^{\op}\times\CC\to \Ab$ (which is the same as an `additive bifunctor' in the sense of \cite[\S{}1.2, p.\ 649]{DraxlerReitenSmaloSolberg-exact-categories-and-vector-space-categories}). 
Thus, for fixed objects $A,C$ in $\CC$, the functors $\BE(C,-)\colon \CC \to \Ab$ and $\BE(-,A)\colon \CC^{\op}\to\Ab$ are additive. 
Furthermore, each morphism $f\colon X\to Y$ in $\CC$ gives rise to abelian group homomorphisms  $\BE(C,f)\colon \BE(C,X)\to \BE(C,Y)$ and $\BE(f^{\op},A)\colon \BE(Y,A)\to\BE(X,A)$. 
To simplify notation, we will omit the superscript ``$^{\op}$'', writing $\BE(f,A)$ instead of $\BE(f^{\op},A)$. 

\end{setup}

\begin{defn}

\cite[Def.\ 2.1, Rem. 2.2, Def.\ 2.3]{HerschendLiuNakaoka-n-exangulated-categories-I-definitions-and-fundamental-properties} 
By an \emph{$\BE$-extension} (or simply an \emph{extension}) we mean an element $\delta$ of $\BE(C,A)$ for some objects $A,C$ in $\CC$. 

For morphisms $x\colon A\to X$, $z\colon Z\to C$ in $\CC$ and for an extension $\delta\in\BE(C,A)$, we obtain new extensions as follows: 
\[
x_{\BE}\delta:=\BE(C,x)(\delta)\in\BE(C,X)
\hspace{1cm}
\text{and} 
\hspace{1cm}
z^{\BE}\delta:=\BE(z,A)(\delta)\in\BE(Z,A).
\] 

Let $\delta\in \BE(C,A)$ and $\eps\in\BE(D,B)$ be extensions. A \emph{morphism of $\BE$-extensions} $\delta\to\eps$ is a pair $(a,c)$ of morphisms $a\colon A\to B$ and $c\colon C\to D$ in $\CC$, such that $ a_{\BE}\delta  = c^{\BE}\eps$.

\end{defn}

By the Yoneda Lemma, each $\BE$-extension gives rise to two natural transformations as follows.

\begin{defn}

\cite[Def.\ 2.11]{HerschendLiuNakaoka-n-exangulated-categories-I-definitions-and-fundamental-properties} 
For any $A,C$ in $\CC$ and any $\BE$-extension $\delta\in\BE(C,A)$, there is an induced natural transformation 
$\delta^{\sharp}_{\BE}\colon \CC(A,-) \Rightarrow \BE(C,-)$, 
which is given by 
$(\delta^{\sharp}_{\BE})_{B}(a)=a_{\BE}\delta$ 
for each object $B\in\CC$ and each morphism $a\colon A\to B$. 
Dually, there is also a natural transformation 
$\delta_{\sharp}^{\BE}\colon \CC(-,C)\Rightarrow \BE(-,A)$, 
which is given by 
$(\delta_{\sharp}^{\BE})_{D}(d)=d^{\BE}\delta$ 
for each object $D\in\CC$ and each morphism $d\colon D\to C$.

\end{defn}

We will be particularly interested in pairs $\lan X^{\raisebox{0.5pt}{\scalebox{0.6}{$\bullet$}}}, \delta\ran$ of complexes and extensions for which the natural transformations ${}\delta^{\sharp}_{\BE}$ and $\delta_{\sharp}^{\BE}$ fit into exact sequences induced by $\Hom$-functors.

\begin{defn}
\label{def:n-exangles}

\cite[Def.\ 2.13]{HerschendLiuNakaoka-n-exangulated-categories-I-definitions-and-fundamental-properties} 
A pair $\langle X^{\raisebox{0.5pt}{\scalebox{0.6}{$\bullet$}}},\delta\rangle$, where $X^{\raisebox{0.5pt}{\scalebox{0.6}{$\bullet$}}}\in\com^{n}_{\CC}$ and $\delta\in\BE(X^{n+1},X^{0})$, is called an $n$-\emph{exangle} if, for every object $Z$ in $\CC$, the sequences 
\[
\begin{tikzcd}[column sep=1cm]\CC(Z,X^{0})\arrow{r}{d_{X}^{0}\circ -}&\CC(Z,X^{1})\arrow{r}{d_{X}^{1}\circ -}&\cdots\arrow{r}{d_{X}^{n}\circ -}&\CC(Z,X^{n+1})\arrow{r}{(\delta_{\sharp}^{\BE})_{Z}}& \BE(Z,X^{0})\end{tikzcd}
\] 
and 
\[
\begin{tikzcd}[column sep=1cm]\CC(X^{n+1},Z)\arrow{r}{-\circ d_{X}^{n}}&\CC(X^{n},Z)\arrow{r}{-\circ d_{X}^{n-1}}&\cdots\arrow{r}{-\circ d_{X}^{0}}&\CC(X^{0},Z)\arrow{r}{(\delta^{\sharp}_{\BE})_{Z}}& \BE(X^{n+1},Z)\end{tikzcd}
\] 
in $\Ab$ are exact. 

\end{defn}

In \cite{HerschendLiuNakaoka-n-exangulated-categories-I-definitions-and-fundamental-properties} and \cite{NakaokaPalu-extriangulated-categories-hovey-twin-cotorsion-pairs-and-model-structures} the notation $x_{*}\delta$, respectively $z^{*}\delta$, is used to label the extensions $x_{\BE}\delta$, respectively $z^{\BE}\delta$. The reason for our change in notation become apparent later; see Remark \ref{rem:E-notation-changes}.

\begin{defn}

\cite[Def.\ 2.17]{HerschendLiuNakaoka-n-exangulated-categories-I-definitions-and-fundamental-properties} 
Let $A,C$ be objects in $\CC$. 
We define the subcategory $\com^{n}_{(A,C)}$ of $\com^{n}_{\CC}$ in the following way. 
The objects of $\com^{n}_{(A,C)}$ consist of those complexes $X^{\raisebox{0.5pt}{\scalebox{0.6}{$\bullet$}}}\in\com_{\CC}^{n}$ for which $X^{0}=A$ and $X^{n+1}=C$. 
For $X^{\raisebox{0.5pt}{\scalebox{0.6}{$\bullet$}}},Y^{\raisebox{0.5pt}{\scalebox{0.6}{$\bullet$}}}\in\com^{n}_{(A,C)}$, we set
\[
\com^{n}_{(A,C)}(X^{\scalebox{0.6}{$\bullet$}},Y^{\scalebox{0.6}{$\bullet$}})\deff
\{
f^{\scalebox{0.6}{$\bullet$}}\in \com^{n}_{\CC}(X^{\scalebox{0.6}{$\bullet$}},Y^{\scalebox{0.6}{$\bullet$}})\mid f^{0}=1_{A}\text{ and }f^{n+1}=1_{C}
\}.
\] 

\end{defn}

For morphisms $f^{\raisebox{0.5pt}{\scalebox{0.6}{$\bullet$}}},g^{\raisebox{0.5pt}{\scalebox{0.6}{$\bullet$}}}\colon X^{\raisebox{0.5pt}{\scalebox{0.6}{$\bullet$}}}\to Y^{\raisebox{0.5pt}{\scalebox{0.6}{$\bullet$}}}$ in $\com_{\CC}$, we write $f^{\raisebox{0.5pt}{\scalebox{0.6}{$\bullet$}}}\sim g^{\raisebox{0.5pt}{\scalebox{0.6}{$\bullet$}}}$ if $f^{\raisebox{0.5pt}{\scalebox{0.6}{$\bullet$}}}$ and $g^{\raisebox{0.5pt}{\scalebox{0.6}{$\bullet$}}}$ are \emph{homotopic} in the usual sense. 
Recall that $\sim$ gives an equivalence relation on $\com_{\CC}(X^{\raisebox{0.5pt}{\scalebox{0.6}{$\bullet$}}},Y^{\raisebox{0.5pt}{\scalebox{0.6}{$\bullet$}}})$ for all complexes  $X^{\raisebox{0.5pt}{\scalebox{0.6}{$\bullet$}}},Y^{\raisebox{0.5pt}{\scalebox{0.6}{$\bullet$}}}$. 
Now fix $A,C\in\CC$ and let $X^{\raisebox{0.5pt}{\scalebox{0.6}{$\bullet$}}},Y^{\raisebox{0.5pt}{\scalebox{0.6}{$\bullet$}}}\in\com^{n}_{(A,C)}$ be arbitrary. 
Note that the set $\com^{n}_{(A,C)}(X^{\raisebox{0.5pt}{\scalebox{0.6}{$\bullet$}}},Y^{\raisebox{0.5pt}{\scalebox{0.6}{$\bullet$}}})$ is no longer an abelian group (see \cite[p.\ 540]{HerschendLiuNakaoka-n-exangulated-categories-I-definitions-and-fundamental-properties}). 
However, the homotopy relation $\sim$ on $\com^{n}_{\CC}(X^{\raisebox{0.5pt}{\scalebox{0.6}{$\bullet$}}},Y^{\raisebox{0.5pt}{\scalebox{0.6}{$\bullet$}}})$ restricts to an equivalence relation on $\com^{n}_{(A,C)}(X^{\raisebox{0.5pt}{\scalebox{0.6}{$\bullet$}}},Y^{\raisebox{0.5pt}{\scalebox{0.6}{$\bullet$}}})$, which we again denote by $\sim$.

\begin{defn}

\cite[\S{}2.3]{HerschendLiuNakaoka-n-exangulated-categories-I-definitions-and-fundamental-properties} 
By $\kom^{n}_{(A,C)}$ we denote the category whose objects are those of $\com^{n}_{(A,C)}$ and, for $X^{\raisebox{0.5pt}{\scalebox{0.6}{$\bullet$}}},Y^{\raisebox{0.5pt}{\scalebox{0.6}{$\bullet$}}}\in\kom^{n}_{(A,C)}$, we put  
\[
\kom^{n}_{(A,C)}(X^{\scalebox{0.6}{$\bullet$}},Y^{\scalebox{0.6}{$\bullet$}})\deff \com^{n}_{(A,C)}(X^{\scalebox{0.6}{$\bullet$}},Y^{\scalebox{0.6}{$\bullet$}})/\sim.
\] 

\end{defn}

Given a morphism $f^{\raisebox{0.5pt}{\scalebox{0.6}{$\bullet$}}}\in\com^{n}_{(A,C)}(X^{\raisebox{0.5pt}{\scalebox{0.6}{$\bullet$}}},Y^{\raisebox{0.5pt}{\scalebox{0.6}{$\bullet$}}})$, we will call $f^{\raisebox{0.5pt}{\scalebox{0.6}{$\bullet$}}}$ a \emph{homotopy equivalence} if it induces an isomorphism in $\kom^{n}_{(A,C)}(X^{\raisebox{0.5pt}{\scalebox{0.6}{$\bullet$}}},Y^{\raisebox{0.5pt}{\scalebox{0.6}{$\bullet$}}})$. In this case, we will say the complexes $X^{\raisebox{0.5pt}{\scalebox{0.6}{$\bullet$}}}$ and $Y^{\raisebox{0.5pt}{\scalebox{0.6}{$\bullet$}}}$ are \emph{homotopy equivalent}. We denote by $[X^{\raisebox{0.5pt}{\scalebox{0.6}{$\bullet$}}}]$ the homotopy equivalence class of $X^{\raisebox{0.5pt}{\scalebox{0.6}{$\bullet$}}}$ in $\com^{n}_{(A,C)}$. 
(Note that this may not coincide with the usual homotopy equivalence class of $X^{\raisebox{0.5pt}{\scalebox{0.6}{$\bullet$}}}$ in $\com_{\CC}^{n}$; see \cite[Rem. 2.18]{HerschendLiuNakaoka-n-exangulated-categories-I-definitions-and-fundamental-properties}.)

\begin{defn}
\label{def:exact-realisation}

\cite[Def.\ 2.22]{HerschendLiuNakaoka-n-exangulated-categories-I-definitions-and-fundamental-properties} 
Let $\fs$ be a correspondence that, for each $A,C\in\CC$, assigns to each extension $\delta\in\BE(C,A)$ a homotopy equivalence class $\fs(\delta)=[X^{\raisebox{0.5pt}{\scalebox{0.6}{$\bullet$}}}]$ where $X^{\raisebox{0.5pt}{\scalebox{0.6}{$\bullet$}}}\in\com^{n}_{(A,C)}$. 
We say that $\fs$ is a \emph{realisation of $\BE$} if 
\begin{enumerate}[label=(R\arabic*)]
\setcounter{enumi}{-1}

    \item\label{R0}
    for all $\delta\in\BE(C,A)$ and $\eps\in\BE(D,B)$ with $\fs(\delta)=[X^{\raisebox{0.5pt}{\scalebox{0.6}{$\bullet$}}}]$ and $\fs(\eps)=[Y^{\raisebox{0.5pt}{\scalebox{0.6}{$\bullet$}}}]$, and for all morphisms $(a,c)\colon \delta\to\eps$ of $\BE$-extensions, there exists a morphism $f^{\raisebox{0.5pt}{\scalebox{0.6}{$\bullet$}}}=(f^{0},\ldots,f^{n+1})\in\com^{n}_{\CC}(X^{\raisebox{0.5pt}{\scalebox{0.6}{$\bullet$}}},Y^{\raisebox{0.5pt}{\scalebox{0.6}{$\bullet$}}})$ with $f^{0}=a$ and $f^{n+1}=c$.

\end{enumerate} 
In this case, we say 
$X^{\raisebox{0.5pt}{\scalebox{0.6}{$\bullet$}}}$ \emph{realises} $\delta$ 
(or $\delta$ is \emph{realised by} $X^{\raisebox{0.5pt}{\scalebox{0.6}{$\bullet$}}}$) 
and that $f^{\raisebox{0.5pt}{\scalebox{0.6}{$\bullet$}}}$ \emph{realises} $(a,c)$
(or $(a,c)$ is \emph{realised by} $f^{\raisebox{0.5pt}{\scalebox{0.6}{$\bullet$}}}$). 
For objects $A,C\in\CC$, we write $_{A}0_{C}$ for the identity element in the abelian group $\BE(C,A)$. 

A realisation $\fs$ of $\BE$ is said to be \emph{exact} if 
\begin{enumerate}[label=(R\arabic*)]

    \item\label{R1}
    the pair $\langle X^{\raisebox{0.5pt}{\scalebox{0.6}{$\bullet$}}},\delta\rangle$ is an $n$-exangle whenever $\fs(\delta)=[X^{\raisebox{0.5pt}{\scalebox{0.6}{$\bullet$}}}]$; and 
    \item\label{R2}
    for any object $A$ in $\CC$, we have that 
    $\fs(_{A}0_{0})=[
    \begin{tikzcd}[column sep=0.7cm] A\arrow{r}{1_{A}}&A\arrow{r}&0\arrow{r}&\cdots\arrow{r}&0\end{tikzcd}
    ]$ 
    and 
    $\fs(_{0}0_{A})=[
    \begin{tikzcd}[column sep=0.7cm] 0\arrow{r}&\cdots\arrow{r}&0\arrow{r}&A\arrow{r}{1_{A}}&A\end{tikzcd}
    ].$
    
\end{enumerate}

\end{defn}

Let us recall some useful terminology from \cite{HerschendLiuNakaoka-n-exangulated-categories-I-definitions-and-fundamental-properties}.

\begin{defn}

\cite[Def.\ 2.23]{HerschendLiuNakaoka-n-exangulated-categories-I-definitions-and-fundamental-properties} 
Let $\fs$ be an exact realisation of $\BE$. 
Suppose $\fs(\delta)=[X^{\raisebox{0.5pt}{\scalebox{0.6}{$\bullet$}}}]$ for some extension $\delta\in\BE(C,A)$ and $X^{\raisebox{0.5pt}{\scalebox{0.6}{$\bullet$}}}\in\com^{n}_{(A,C)}$. 
Then we say $\lan X^{\raisebox{0.5pt}{\scalebox{0.6}{$\bullet$}}},\delta\ran$ is an \emph{$\fs$-distinguished $n$-exangle} and $X^{\raisebox{0.5pt}{\scalebox{0.6}{$\bullet$}}}$ is an \emph{$\fs$-conflation}. 
A morphism $f$ in $\CC$ is called an \emph{$\fs$-inflation} (respectively, \emph{$\fs$-deflation}) if there exists an $\fs$-conflation $X^{\raisebox{0.5pt}{\scalebox{0.6}{$\bullet$}}}$ with $d_{X}^{0}=f$ (respectively, $d_{X}^{n}=f$).

\end{defn}

We are now in a position to recall the definition of an $n$-exangulated category.

\begin{defn}
\label{def:n-exangulated-category}

\cite[Def.\ 2.32]{HerschendLiuNakaoka-n-exangulated-categories-I-definitions-and-fundamental-properties} 
Recall that $n\geq 1$ is a positive integer, $\CC$ is an additive category and $\BE\colon \CC^{\op}\times\CC\to \Ab$ is a biadditive functor. Let $\fs$ be an exact realisation of $\BE$. Then $(\CC,\BE,\fs)$ is called an \emph{$n$-exangulated category} if the following axioms are satisfied. 
\begin{enumerate}[label=($n$-EA\arabic*),wide=0pt, leftmargin=50pt, labelwidth=50pt, labelsep=0pt, align=left]

    \item\label{nEA1}
    The class of $\fs$-inflations is closed under composition. Dually, the class of $\fs$-deflations is closed under composition.
    \item\label{nEA2}
    For each $\delta\in\BE(D,A)$ and each $c\in\CC(C,D)$, if $\fs(c^{\BE}\delta)=[X^{\raisebox{0.5pt}{\scalebox{0.6}{$\bullet$}}}]$ and $\fs(\delta)=[Y^{\raisebox{0.5pt}{\scalebox{0.6}{$\bullet$}}}]$, then there exists a morphism $f^{\raisebox{0.5pt}{\scalebox{0.6}{$\bullet$}}}=(1_{A},f^{1},\ldots,f^{n},c)\colon X^{\raisebox{0.5pt}{\scalebox{0.6}{$\bullet$}}}\to Y^{\raisebox{0.5pt}{\scalebox{0.6}{$\bullet$}}}$ realising $(1_{A},c)\colon c^{\BE}\delta\to\delta$ such that $\fs((d_{X}^{0})_{\BE}\delta)=[ \cone(\wh{f})^{\raisebox{0.5pt}{\scalebox{0.6}{$\bullet$}}}]$, where $\wh{f}^{\raisebox{0.5pt}{\scalebox{0.6}{$\bullet$}}}=(f^{1},\ldots,f^{n},c)$. Such an $f^{\raisebox{0.5pt}{\scalebox{0.6}{$\bullet$}}}$ is called a \emph{good lift} of $(1_{A},c)$.
    \item[($n$-EA$2^{\op}$)]\label{nEA2op}
    Dual of \ref{nEA2}.
    
\end{enumerate}

\end{defn}

\begin{example}

A category is $1$-exangulated if and only if it is \emph{extriangulated} (in the sense of \cite{NakaokaPalu-extriangulated-categories-hovey-twin-cotorsion-pairs-and-model-structures}); see \cite[Prop.\ 4.3]{HerschendLiuNakaoka-n-exangulated-categories-I-definitions-and-fundamental-properties}.

\end{example}

Let us recall how $(n+2)$-angulated and $n$-exact categories provide examples of $n$-exangulated categories.

\begin{example}
\label{exa:n+2-angulated-category-is-n-exangulated}

Suppose $(\CC,\sus,\CT)$ is an $(n+2)$-angulated category. 
From this information, we obtain a functor $\BE_{\sus}\colon \CC^{\op}\times\CC\to\Ab$ by setting $\BE_{\sus}(X^{n+1},X^{0})\deff \CC(X^{n+1},\sus X^{0})$ for any $X^{0},X^{n+1}$ in $\CC$, and defining the map $\BE_{\sus}(f,g)\colon \BE_{\sus}(X^{n+1},X^{0}) \to \BE_{\sus}(Y^{n+1},Y^{0})$ by $\BE_{\sus}(f,g)(\delta)\deff (\sus g)\circ \delta\circ f$ for any $f\colon Y^{n+1}\to X^{n+1}$ and $g\colon X^{0}\to Y^{0}$. 
An exact realisation $\fs$ of $\BE_{\sus}$ is given as follows: 
for an element $\delta\in\BE_{\sus}(X^{n+1},X^{0})$, complete it to an $(n+2)$-angle 
$X^{0} \to \cdots \to X^{n+1} \overset{\delta}{\to} \sus X^{0}$ and set $\fs(\delta) = [X^{0}\to\cdots\to X^{n+1}]$. 
Then $(\CC,\BE_{\sus},\fs)$ is an $n$-exangulated category. 
See \cite[\S{}4.2]{HerschendLiuNakaoka-n-exangulated-categories-I-definitions-and-fundamental-properties} for more details.

\end{example}

\begin{example}
\label{exa:n-exact-category-is-n-exangulated}

Suppose $(\CC,\CX)$ is an $n$-exact category. 
For any $X^{0},X^{n+1}$ in $\CC$, denote by $\Lambda^{n}_{(X^{0},X^{n+1})}$ the class of all homotopy equivalence classes of $n$-exact sequences in $\com^{n}_{(X^{0},X^{n+1})}$. 
Then define $\BE_{\CX}(X^{n+1},X^{0})$ to be the subclass of $\Lambda^{n}_{(X^{0},X^{n+1})}$ consisting of those classes $[X^{\raisebox{0.5pt}{\scalebox{0.6}{$\bullet$}}}]$ such that $X^{\raisebox{0.5pt}{\scalebox{0.6}{$\bullet$}}}\in\CX$. 
We assume that $\BE_{\CX}(X^{n+1},X^{0})$ is a set for all $X^{0},X^{n+1}\in\CC$. 
For any $\CX$-admissible $n$-exact sequence $X^{\raisebox{0.5pt}{\scalebox{0.6}{$\bullet$}}}$ and any morphism $f\colon X^{0}\to Y^{0}$, we can form an $n$-pushout of $(d_{X}^{0},\ldots,d_{X}^{n-1})$ along $f$ to obtain a morphism 
\[
\begin{tikzcd}
    X^{0}\arrow[tail]{r}{d^{0}_{X}}\arrow{d}{f}& X^{1} \arrow{r}{d^{1}_{X}}\arrow[dotted]{d}{f^{1}}& \cdots\arrow{r}{d^{n-1}_{X}}&X^{n}\arrow[two heads]{r}{d^{n}_{X}}\arrow[dotted]{d}{f^{n}}& X^{n+1}\arrow[equals]{d}\\
    Y^{0}\arrow[tail,dotted]{r}[swap]{d^{0}_{Y}}& Y^{1} \arrow[dotted]{r}[swap]{d^{1}_{Y}}& \cdots\arrow[dotted]{r}[swap]{d^{n-1}_{Y}}&Y^{n}\arrow{r}[swap]{d_{Y}^{n}} & X^{n+1}
    \end{tikzcd}
    \]
of $\CX$-admissible $n$-exact sequences by \cite[Prop.\ 4.8]{Jasso-n-abelian-and-n-exact-categories}. 
Thus, set $\BE_{\CX}(X^{n+1},f)([X^{\raisebox{0.5pt}{\scalebox{0.6}{$\bullet$}}}])=[Y^{\raisebox{0.5pt}{\scalebox{0.6}{$\bullet$}}}]$. We can define $\BE_{\CX}(g,X^{0})$ in a dual manner. 
Then $\BE_{\CX}\colon \CC^{\op}\times \CC \to\Ab$ is a well-defined biadditive functor. 
Lastly, the assignment $\fs(\delta)=[X^{\raisebox{0.5pt}{\scalebox{0.6}{$\bullet$}}}]$ for each $\delta=[X^{\raisebox{0.5pt}{\scalebox{0.6}{$\bullet$}}}]\in\BE_{\CX}(X^{n+1},X^{0})$ gives an exact realisation of $\BE_{\CX}$, and  $(\CC,\BE_{\CX},\fs)$ is an $n$-exangulated category. See \cite[\S{}4.3]{HerschendLiuNakaoka-n-exangulated-categories-I-definitions-and-fundamental-properties} for more details.

\end{example}

For any functor $\SF\colon \CA\to\CB$, we denote by $\SF^{\op}$ the \emph{opposite} functor $\CA^{\op}\to \CB^{\op}$ given by $\SF^{\op}(A)=\SF A$ and $\SF(f^{\op})=(\SF f)^{\op}$ for any objects $A,B\in\CA$ and any morphism $f\in\CA(A,B)$; see \cite[\S{}IV.5]{MacLaneMoerdijk-sheaves-in-geometry-and-logic}. 

We now introduce the notion of an $n$-exangulated functor, which appears to be new. 

\begin{defn}
\label{def:n-exangulated-functor}

Let $(\CC,\BE,\fs)$ and $(\CC',\BE',\fs')$ be $n$-exangulated categories. An additive covariant functor $\SF\colon \CC\to\CC'$ is called an \emph{$n$-exangulated} functor if 
there exists a natural transformation 
\[
\Gamma=\{\Gamma_{(C,A)}\}_{(C,A)\in\CC^{\op}\times\CC}\colon \BE(-,-) \Longrightarrow \BE'(\SF^{\op}-,\SF-)
\] 
of functors $\CC^{\op}\times\CC \to \Ab$, such that 
$\fs(\delta)=[X^{\raisebox{0.5pt}{\scalebox{0.6}{$\bullet$}}}]$ implies 
$\fs'(\Gamma_{(X^{n+1},X^{0})}(\delta))=[\SF_{\com} X^{\raisebox{0.5pt}{\scalebox{0.6}{$\bullet$}}}]$. 
To simplify notation, we write $\BE'(\SF-,\SF-)$ in place of $\BE'(\SF^{\op}-,\SF-)$. 
An $n$-exangulated functor $\SF\colon\CC\to\CC'$ is called an \emph{$n$-exangle equivalence} if it is an equivalence of categories.

In particular, we call a $1$-exangulated functor an \emph{extriangulated} functor.

\end{defn}

The following two results show that this definition specialises to recover the definitions of an $(n+2)$-angulated functor and an $n$-exact functor.

\begin{thm}
\label{thm:n+2-angulated-functor-iff-n-exangulated}

Let $(\CC,\sus,\CT)$ and $(\CC',\sus',\CT')$ be $(n+2)$-angulated categories. 
Following the notation of \emph{Example \ref{exa:n+2-angulated-category-is-n-exangulated}}, set $\BE\deff \BE_{\sus}$ and $\BE'\deff \BE_{\sus'}$, and consider the $n$-exangulated categories $(\CC,\BE,\fs)$ and $(\CC',\BE',\fs')$. 
Suppose $\SF\colon \CC\to\CC'$ is an additive covariant functor. Then $\SF$ is an $(n+2)$-angulated functor if and only if $\SF$ is an $n$-exangulated functor.

\end{thm}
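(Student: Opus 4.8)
The plan is to prove both implications by carefully translating between the two sets of data: an $(n+2)$-angulated functor carries a natural isomorphism $\Theta\colon \SF\sus \Rightarrow \sus'\SF$, whereas an $n$-exangulated functor carries a natural transformation $\Gamma\colon \BE(-,-) \Rightarrow \BE'(\SF-,\SF-)$; since $\BE_{\sus}(X^{n+1},X^0) = \CC(X^{n+1},\sus X^0)$ and $\BE_{\sus'}(\SF X^{n+1},\SF X^0) = \CC'(\SF X^{n+1}, \sus'\SF X^0)$, the correspondence between $\Theta$ and $\Gamma$ should be $\Gamma_{(C,A)}(\delta) = \Theta_A \circ \SF\delta$ for $\delta \in \CC(C,\sus A)$. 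First I would verify that this formula defines a natural transformation of bifunctors $\CC^{\op}\times\CC \to \Ab$ precisely when $\Theta$ is a natural isomorphism $\SF\sus\Rightarrow\sus'\SF$: additivity of each component is immediate since $\SF$ is additive and composition is bilinear, and the naturality squares in the two variables unwind to the naturality of $\Theta$ together with functoriality of $\SF$ (for the covariant variable one uses $\Theta_B \circ \SF(\sus g) = (\sus' \SF g)\circ \Theta_A$, which is exactly the naturality square of $\Theta$ at $g\colon A\to B$; for the contravariant variable one only needs $\SF(\delta\circ f) = \SF\delta\circ\SF f$). Conversely, given an arbitrary natural transformation $\Gamma$ of the bifunctors, restricting to the covariant variable at a fixed first argument and invoking the (enriched) Yoneda lemma — or simply reading off $\Gamma$ on identity-type extensions — recovers a natural transformation $\SF\sus\Rightarrow\sus'\SF$, and the $n$-exangulated condition on $\Gamma$ will force it to be a natural isomorphism (see below); so the passage $\Theta\leftrightarrow\Gamma$ is a bijection between the two kinds of structure.

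Next I would match the ``distinguished'' conditions. Recall from Example~\ref{exa:n+2-angulated-category-is-n-exangulated} that $\fs(\delta) = [X^0\to\cdots\to X^{n+1}]$ exactly when $X^0\to\cdots\to X^{n+1}\overset{\delta}{\to}\sus X^0$ is an $(n+2)$-angle in $\CC$, and similarly for $\fs'$ in $\CC'$. Under the identification above, the $n$-exangulated functor axiom ``$\fs(\delta)=[X^{\combul}] \implies \fs'(\Gamma_{(X^{n+1},X^0)}(\delta)) = [\SF_{\com}X^{\combul}]$'' reads: whenever $X^0\to\cdots\to X^{n+1}\overset{\delta}{\to}\sus X^0$ is an $(n+2)$-angle, the sequence
\[
\begin{tikzcd}[column sep=1.6cm]
\SF X^0 \arrow{r}{\SF d_X^0} & \SF X^1 \arrow{r}{\SF d_X^1} & \cdots \arrow{r}{\SF d_X^n} & \SF X^{n+1} \arrow{r}{\Theta_{X^0}\circ \SF\delta} & \sus'\SF X^0
\end{tikzcd}
\]
is an $(n+2)$-angle in $\CC'$. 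But since $\delta = d_X^{n+1}$ here, this is verbatim the defining condition of an $(n+2)$-angulated functor in Definition~\ref{def:n-angulated-functor}. So once the $\Theta\leftrightarrow\Gamma$ dictionary is set up, the two properties are literally the same statement, and both implications follow. The one genuinely non-formal point is the forward direction's claim that a $\Gamma$ coming with an $n$-exangulated functor is automatically a natural \emph{isomorphism} (so that it yields a legitimate $\Theta$): I would handle this by taking, for each $A\in\CC$, the split $(n+2)$-angle $A\overset{1}{\to}A\to 0\to\cdots\to 0\overset{0}{\to}\sus A$ which realises the zero extension, together with a rotated/contractible $(n+2)$-angle of the form $\cdots\to 0 \to \sus A\overset{1}{\to}\sus A$; applying the $n$-exangulated axiom to suitable such $(n+2)$-angles (or to the $(n+2)$-angle with first map $1_{\sus A}$) forces $\SF$ to send the relevant identities to identities and pins down $\Gamma$ on every $\CC(C,\sus A)$ via the natural transformation $\delta^{\sharp}$, ultimately identifying $\Gamma_{(C,A)}$ with post-composition by a \emph{single} isomorphism $\Theta_A$ independent of $C$ — one can alternatively cite that in an $(n+2)$-angulated category $\BE_{\sus}(C,-)\cong\CC(C,\sus-)$ is a representable-up-to-$\sus$ functor, so a natural transformation out of it is determined by an element of $\BE'_{\sus'}(\SF C,\SF A)$, i.e.\ a morphism $\SF C\to\sus'\SF A$, and naturality in $C$ then upgrades the collection to a natural transformation $\SF\sus\Rightarrow\sus'\SF$ which is invertible because $\sus,\sus',\SF$ are all equivalences and $\Gamma$ is compatible with the rotation axiom~\ref{F2}.

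I expect the main obstacle to be precisely this last step — showing that the abstract natural transformation $\Gamma$ attached to an $n$-exangulated functor is forced to be an isomorphism of the expected representable shape, rather than some a priori weaker natural transformation of bifunctors. The cleanest route is probably: (i) use \ref{R2} for $\fs$ together with the $n$-exangulated axiom to show $\fs'(\Gamma_{(0,A)}({}_A0_0)) = [\SF_{\com}(\text{contractible complex})]$, hence $\Gamma$ respects the trivial extensions; (ii) use the Yoneda-type description of $\fs$-distinguished $n$-exangles via the natural transformations $\delta^{\sharp}_{\BE}$, $\delta_{\sharp}^{\BE}$ in Definition~\ref{def:n-exangles} to see that $\Gamma_{(C,A)}$ must be compatible with these, which (since in the $(n+2)$-angulated case the relevant $\Hom$-exact sequences are just the long exact sequences coming from rotating $(n+2)$-angles) forces $\Gamma_{(C,A)}(\delta) = \Theta_A\circ\SF\delta$ for a uniquely determined $\Theta_A\in\CC'(\SF\sus A,\sus'\SF A)$; (iii) deduce $\Theta_A$ is an isomorphism by applying the $n$-exangulated axiom to the $(n+2)$-angle whose first morphism is $1_{A}$ (forcing $\Theta_A$ to be the comparison isomorphism between two completions of $\SF 1_A$, hence invertible by the uniqueness-up-to-isomorphism of cones in an $(n+2)$-angulated category). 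Everything else in the argument is a direct unwinding of definitions and of Example~\ref{exa:n+2-angulated-category-is-n-exangulated}.
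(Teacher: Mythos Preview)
Your overall strategy matches the paper's proof almost exactly: the dictionary $\Gamma_{(C,A)}(\delta)=\Theta_A\circ\SF\delta$ in one direction, and $\Theta_X\coloneqq\Gamma_{(\sus X,X)}(1_{\sus X})$ (i.e.\ ``evaluate $\Gamma$ on the identity extension'') in the other, is precisely what the paper does, and your verification that the distinguished conditions coincide is correct.

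There is, however, a genuine gap in your argument that $\Theta_A$ is an isomorphism. In step~(iii) you propose to use the $(n+2)$-angle whose \emph{first} morphism is $1_A$, namely $A\xrightarrow{1_A}A\to 0\to\cdots\to 0\xrightarrow{0}\sus A$. But this angle has $\delta=d_X^{n+1}=0$, so applying the $n$-exangulated axiom only tells you that $\SF A\xrightarrow{1}\SF A\to 0\to\cdots\to 0\xrightarrow{0}\sus'\SF A$ is an $(n+2)$-angle in $\CC'$; the morphism $\Theta_A$ never appears, and no comparison-of-cones argument can extract it from this. Your alternative suggestion --- that $\Theta$ is invertible ``because $\sus,\sus',\SF$ are all equivalences'' --- is also wrong: $\SF$ is an arbitrary additive functor in the statement, not an equivalence.

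The fix is already present in your own earlier paragraph: use the rotated $(n+2)$-angle $X\to 0\to\cdots\to 0\to\sus X\xrightarrow{1_{\sus X}}\sus X$, whose last morphism is $\delta=1_{\sus X}$. The $n$-exangulated axiom then yields the $(n+2)$-angle
\[
\begin{tikzcd}[column sep=1.1cm]
\SF X \arrow{r} & 0 \arrow{r} & \cdots \arrow{r} & 0 \arrow{r} & \SF\sus X \arrow{r}{\Theta_X} & \sus'\SF X
\end{tikzcd}
\]
in $\CC'$, and now the long exact sequences obtained by applying $\CC'(\sus'\SF X,-)$ and $\CC'(-,\SF\sus X)$ immediately force $\Theta_X$ to be an isomorphism. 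This is exactly the paper's argument; once you make this substitution your proof is complete and coincides with the paper's.
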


\begin{proof}

$(\Rightarrow)$ Suppose first that $\SF$ is an $(n+2)$-angulated functor. Then there exists a natural isomorphism $\Theta\colon \SF\sus \overset{\iso}{\Longrightarrow} \sus'\SF$, such that for any $(n+2)$-angle 
$\begin{tikzcd}[column sep=0.7cm]X^{0}\arrow{r}{d_{X}^{0}}&X^{1}\arrow{r}{d_{X}^{1}}&\cdots\arrow{r}{d_{X}^{n}}&X^{n+1}\arrow{r}{d_{X}^{n+1}}&\sus X^{0}\end{tikzcd}$ in $\CC$, 
\[
\begin{tikzcd}[column sep=1.9cm] \SF X^{0}\arrow{r}{\SF d_{X}^{0}}&\SF X^{1}\arrow{r}{\SF d_{X}^{1}}&\cdots\arrow{r}{\SF d_{X}^{n}}&\SF X^{n+1}\arrow{r}{\Theta_{X^{0}}\circ\SF d_{X}^{n+1}}&\sus' \SF X^{0}\end{tikzcd}
\]
is an $(n+2)$-angle in $\CC'$.

For each $X^{0},X^{n+1}$ in $\CC$, let $\Gamma_{(X^{n+1},X^{0})}$ be the composition:
\[
\begin{tikzcd}[column sep = 1.6cm]
\CC(X^{n+1},\sus X^{0})\arrow{r}{\SF_{X^{n+1},\sus X^{0}}}&\CC'(\SF X^{n+1},\SF \sus X^{0})\arrow{r}{\Theta_{X^{0}}\circ -} & \CC'(\SF X^{n+1},\sus'\SF X^{0}).\end{tikzcd}
\]
This gives a homomorphism $\Gamma_{(X^{n+1},X^{0})}\colon \BE(X^{n+1},X^{0}) \to \BE'(\SF X^{n+1},\SF X^{0})$. 
We claim that the collection 
$\Gamma=\{\Gamma_{(X^{n+1},X^{0})}\}_{(X^{n+1},X^{0})\in\CC^{\op}\times\CC}$ gives a natural transformation $\BE(-,-)\Longrightarrow\BE'(\SF-,\SF-)$.

To this end, suppose $f\colon Y^{n+1}\to X^{n+1}$ and $g\colon X^{0}\to Y^{0}$ are arbitrary morphisms in $\CC$. Then we must show that the square
\begin{equation}\label{eqn:n+2-angulated-functor-implies-n-exangulated-functor-natural-transformation}
\begin{tikzcd}[column sep=2cm]
\BE(X^{n+1},X^{0}) \arrow{r}{\Gamma_{(X^{n+1},X^{0})}}\arrow{d}{\BE(f,g)}& \BE'(\SF X^{n+1},\SF X^{0}) \arrow{d}{\BE'(\SF f, \SF g)}\\
\BE(Y^{n+1},Y^{0}) \arrow{r}{\Gamma_{(Y^{n+1},Y^{0})}}& \BE'(\SF Y^{n+1},\SF Y^{0} )
\end{tikzcd}\end{equation}
commutes. 
Note that for any $\delta\in \BE(X^{n+1},X^{0})$ we have
\[
\BE'(\SF f,\SF g)(\Gamma_{(X^{n+1},X^{0})}(\delta)) 
= \BE'(\SF f,\SF g)(\Theta_{X^{0}}\SF \delta) 
= (\sus'\SF g)\Theta_{X^{0}}(\SF \delta)(\SF f).
\] 
On the other hand, 
\[
\Gamma_{(Y^{n+1},Y^{0})}(\BE(f,g)(\delta)) 
= \Gamma_{(Y^{n+1},Y^{0})}((\sus g) \delta f) 
= \Theta_{Y^{0}}(\SF\sus g) (\SF \delta) (\SF f),
\] 
which is equal to $(\sus'\SF g)\Theta_{X^{0}}(\SF\delta)(\SF f)$ since $(\sus'\SF g)\Theta_{X^{0}} =\Theta_{Y^{0}}(\SF\sus g)$ as $\Theta$ is natural. So \eqref{eqn:n+2-angulated-functor-implies-n-exangulated-functor-natural-transformation} commutes and $\Gamma$ is indeed a natural transformation.

Lastly, suppose $\fs(\delta)=[X^{\raisebox{0.5pt}{\scalebox{0.6}{$\bullet$}}}]$. That is, 
$\begin{tikzcd}[column sep=0.7cm]X^{0}\arrow{r}{d_{X}^{0}}&X^{1}\arrow{r}{d_{X}^{1}}&\cdots\arrow{r}{d_{X}^{n}}&X^{n+1}\arrow{r}{\delta}&\sus X^{0}\end{tikzcd}$ is an $(n+2)$-angle in $\CC$. Then, as $\SF$ is an $(n+2)$-angulated functor, 
\[
\begin{tikzcd}[column sep=1.7cm] \SF X^{0}\arrow{r}{\SF d_{X}^{0}}&\SF X^{1}\arrow{r}{\SF d_{X}^{1}}&\cdots\arrow{r}{\SF d_{X}^{n}}&\SF X^{n+1}\arrow{r}{\Theta_{X^{0}}\circ\SF \delta}&\sus' \SF X^{0}\end{tikzcd}
\]
is an $(n+2)$-angle in $\CC'$. 
Since $\Gamma_{(X^{n+1},X^{0})}(\delta) = \Theta_{X^{0}}\circ\SF \delta$, we see that 
$\fs'(\Gamma_{(X^{n+1},X^{0})}(\delta))=[\SF_{\com} X^{\raisebox{0.5pt}{\scalebox{0.6}{$\bullet$}}}]$.
Hence, $\SF$ satisfies Definition \ref{def:n-exangulated-functor}.

$(\Leftarrow)$ Conversely, assume $\SF$ satisfies Definition \ref{def:n-exangulated-functor} with natural transformation $\Gamma\colon \BE(-,-) \Rightarrow \BE'(\SF-,\SF-)$. 
Let $X\in\CC$ be arbitrary. 
Setting $X^{n+1} = \sus X$ and $X^{0}=X$, we obtain a morphism
\[
\begin{tikzcd}[column sep = 1.5cm]
\CC(\sus X,\sus X) = \BE(\sus X, X) \arrow{r}{\Gamma_{(\sus X,X)}} 
	& \BE'(\SF \sus X, \SF X) = \CC'(\SF\sus X,\sus' \SF X).
\end{tikzcd}
\]
Set $\Theta_{X}\deff \Gamma_{(\sus X,X)}(1_{\sus X})\colon \SF \sus X \to \sus'\SF X$. 

We claim that $\Theta = \{\Theta_{X}\}_{X\in\CC}$ is a natural isomorphism $\SF \sus\Rightarrow \sus'\SF$. 
Let $a\colon X\to X'$ be an arbitrary morphism in $\CC$. 
Then we must show that $(\sus'\SF a)\Theta_{X}= \Theta_{X'}(\SF\sus a)$. 
Since \eqref{eqn:n+2-angulated-functor-implies-n-exangulated-functor-natural-transformation} commutes for all $f\colon Y^{n+1}\to X^{n+1}$ and $g\colon X^{0}\to Y^{0}$ in $\CC$, we can first choose $f=1_{\sus X}$ and $g=a$, and then, using $\delta = 1_{\sus X}\in\BE(\sus X, X)$, we obtain $(\sus' \SF a)\Theta_{X}= \Gamma_{(\sus X,X')}(\sus a)$. 
Secondly, choosing $f=\sus a, g=1_{X'}$ and $\delta = 1_{\sus X'}\in\BE(\sus X',X')$ yields $\Gamma_{(\sus X, X')}(\sus a) = \Theta_{X'}(\SF\sus a)$. 
Consequently, $(\sus' \SF a)\Theta_{X} =\Theta_{X'}(\SF\sus a) $ and $\Theta$ is a natural transformation.

To show $\Theta_{X}$ is an isomorphism for each $X\in\CC$, consider the $(n+2)$-angle 
\[
\begin{tikzcd}
X \arrow{r}& 0 \arrow{r}& \cdots \arrow{r}& 0 \arrow{r}& \sus X \arrow{r}{1_{\sus X}}& \sus X
\end{tikzcd}
\] 
in $\CC$. 
Denote by $Z^{\raisebox{0.5pt}{\scalebox{0.6}{$\bullet$}}}$ the complex 
$\begin{tikzcd}[column sep=0.5cm]
X \arrow{r}& 0 \arrow{r}& \cdots \arrow{r}& 0 \arrow{r}& \sus X
\end{tikzcd}$ in $\com_{(X,\sus X)}^{n}$. 
Since $\SF$ is an $n$-exangulated functor and $\fs(1_{\sus X})= [Z^{\raisebox{0.5pt}{\scalebox{0.6}{$\bullet$}}}]$, we have that $\fs'(\Gamma_{(\sus X, X)}(1_{\sus X}))= [\SF_{\com} Z^{\raisebox{0.5pt}{\scalebox{0.6}{$\bullet$}}}]$. 
Hence, in $\CC'$ we have the $(n+2)$-angle 
\begin{equation}\label{eqn:delta-sub-sus-X}
\begin{tikzcd}
\SF X \arrow{r}& 0\arrow{r}& \cdots \arrow{r}& 0 \arrow{r}& \SF\sus X \arrow{r}{\Theta_{X}}& \sus' \SF X
\end{tikzcd}
\end{equation}
as $\Theta_{X}= \Gamma_{(\sus X,X)}(1_{\sus X})$. 
It follows that $\Theta_{X}$ is an isomorphism, because applying the functors $\CC'(\sus'\SF X,-)$ and $\CC'(-,\SF\sus X)$ to 
\eqref{eqn:delta-sub-sus-X} 
yields long exact sequences in $\Ab$.

Finally, suppose that $X^{\raisebox{0.5pt}{\scalebox{0.6}{$\bullet$}}}$ is an $(n+2)$-angle in $\CC$. Then 
\[
\begin{tikzcd}[column sep=2.6cm] \SF X^{0}\arrow{r}{\SF d_{X}^{0}}&\SF X^{1}\arrow{r}{\SF d_{X}^{1}}&\cdots\arrow{r}{\SF d_{X}^{n}}&\SF X^{n+1}
\arrow{r}{\Gamma_{(X^{n+1},X^{0})}(d_{X}^{n+1})}&\sus' \SF X^{0}\end{tikzcd}
\]
is an $(n+2)$-angle in $\CC'$. 
Note that the commutativity of \eqref{eqn:n+2-angulated-functor-implies-n-exangulated-functor-natural-transformation} with $f=d_{X}^{n+1}, g=1_{X^{0}}$ and $\delta = 1_{\sus X^{0}}\in \BE(\sus X^{0},X^{0})$ implies $\Gamma_{(X^{n+1},X^{0})}(d_{X}^{n+1})= \Gamma_{(\sus X^{0},X^{0})}(1_{\sus X^{0}})\circ \SF d_{X}^{n+1}=\Theta_{X^{0}}\circ \SF d_{X}^{n+1}$. Hence, 
\[
\begin{tikzcd}[column sep=2cm] \SF X^{0}\arrow{r}{\SF d_{X}^{0}}&\SF X^{1}\arrow{r}{\SF d_{X}^{1}}&\cdots\arrow{r}{\SF d_{X}^{n}}&\SF X^{n+1}
\arrow{r}{\Theta_{X^{0}}\circ \SF d_{X}^{n+1}}&\sus' \SF X^{0}\end{tikzcd}
\]
is an $(n+2)$-angle in $\CC'$.

\end{proof}

\begin{thm}
\label{thm:n-exact-functor-iff-n-exangulated}

Let $(\CC,\CX)$ and $(\CC',\CX')$ be $n$-exact categories. 
Following the notation of \emph{Example \ref{exa:n-exact-category-is-n-exangulated}}, set $\BE\deff\BE_{\CX}$ and $\BE'\deff\BE_{\CX'}$. 
Assume for all $X^{0},X^{n+1}\in\CC$ (respectively, $Z^{0},Z^{n+1}\in\CC'$) that $\BE(X^{n+1},X^{0})$ (respectively, $\BE'(Z^{n+1},Z^{0})$) is a set. 
Consider the $n$-exangulated categories  $(\CC,\BE,\fs)$ and $(\CC',\BE',\fs')$. 
Suppose $\SF\colon \CC\to\CC'$ is an additive covariant functor. 
Then $\SF$ is an $n$-exact functor if and only if $\SF$ is an $n$-exangulated functor.

\end{thm}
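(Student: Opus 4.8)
The plan is to treat the two implications separately. For the forward implication the natural candidate for the comparison natural transformation is the ``obvious'' one, sending the class of an $\CX$-admissible $n$-exact sequence $X^{\bullet}$ to the class of $\SF_{\com}X^{\bullet}$ (which is $\CX'$-admissible $n$-exact precisely because $\SF$ is $n$-exact); the reverse implication, on the other hand, should follow almost immediately from the defining property of an $n$-exangulated functor together with the closure of $\CX'$ under weak isomorphisms.

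I would first dispatch $(\Leftarrow)$. Suppose $\SF$ is $n$-exangulated, with natural transformation $\Gamma\colon\BE(-,-)\Rightarrow\BE'(\SF-,\SF-)$ as in Definition \ref{def:n-exangulated-functor}. Given an $\CX$-admissible $n$-exact sequence $X^{\bullet}$, put $\delta\deff[X^{\bullet}]\in\BE(X^{n+1},X^{0})$. By the description of $\fs$ in Example \ref{exa:n-exact-category-is-n-exangulated} we have $\fs(\delta)=[X^{\bullet}]$, so the defining property of $\SF$ gives $\fs'(\Gamma_{(X^{n+1},X^{0})}(\delta))=[\SF_{\com}X^{\bullet}]$. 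But $\Gamma_{(X^{n+1},X^{0})}(\delta)$ lies in $\BE'(\SF X^{n+1},\SF X^{0})=\BE_{\CX'}(\SF X^{n+1},\SF X^{0})$, so by the construction of $\BE_{\CX'}$ and $\fs'$ it equals $[W^{\bullet}]$ for some $\CX'$-admissible $n$-exact sequence $W^{\bullet}$, with $\fs'([W^{\bullet}])=[W^{\bullet}]$. Hence $\SF_{\com}X^{\bullet}$ and $W^{\bullet}$ are homotopy equivalent in $\com^{n}_{(\SF X^{0},\SF X^{n+1})}$; in particular $\SF_{\com}X^{\bullet}$ is again an $n$-exact sequence, and a connecting homotopy equivalence is a morphism that is the identity in degrees $0$ and $n+1$, hence a weak isomorphism (take $i=n+1$, using $f^{n+2}\deff f^{0}$). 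Since $\CX'$ is closed under weak isomorphisms (Definition \ref{def:n-exact-category}), $\SF_{\com}X^{\bullet}\in\CX'$. As $X^{\bullet}$ was arbitrary, $\SF_{\com}(\CX)\subseteq\CX'$, so $\SF$ is $n$-exact by Definition \ref{def:n-exact-functor}.

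For $(\Rightarrow)$, assume $\SF$ is $n$-exact and define, for $A,C\in\CC$, the map $\Gamma_{(C,A)}\colon\BE(C,A)\to\BE'(\SF C,\SF A)$ by $\Gamma_{(C,A)}([X^{\bullet}])\deff[\SF_{\com}X^{\bullet}]$. I would then verify, in order: (i) $\Gamma_{(C,A)}$ is well defined --- $\SF_{\com}X^{\bullet}$ is $\CX'$-admissible $n$-exact by hypothesis, and an additive functor preserves chain homotopies, so homotopy equivalent complexes in $\com^{n}_{(A,C)}$ have homotopy equivalent images in $\com^{n}_{(\SF A,\SF C)}$; (ii) each $\Gamma_{(C,A)}$ is a group homomorphism and the family $\Gamma$ is natural in both variables; (iii) the realisation-compatibility condition of Definition \ref{def:n-exangulated-functor} holds, which is immediate because $\fs(\delta)=[X^{\bullet}]$ forces $\delta=[X^{\bullet}]$, whence $\fs'(\Gamma_{(X^{n+1},X^{0})}(\delta))=\fs'([\SF_{\com}X^{\bullet}])=[\SF_{\com}X^{\bullet}]$.

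The bulk of the work --- and the step I expect to be the main obstacle --- is (ii). The abelian group structure on $\BE_{\CX}(C,A)$ and the functoriality maps $\BE_{\CX}(g,A)$, $\BE_{\CX}(C,f)$ are all defined, via the Baer sum, from the biproduct of $\CX$-admissible $n$-exact sequences together with $n$-pushouts and $n$-pullbacks along morphisms (cf.\ Example \ref{exa:n-exact-category-is-n-exangulated} and \cite[Prop.\ 4.8]{Jasso-n-abelian-and-n-exact-categories}). So (ii) reduces to showing that an $n$-exact functor $\SF$ carries the biproduct of $\CX$-admissible $n$-exact sequences to the biproduct of their images (clear, as $\SF$ is additive) and carries an $n$-pushout (respectively, $n$-pullback) square of $\CX$-admissible $n$-exact sequences along a morphism to an $n$-pushout (respectively, $n$-pullback) square in $\CC'$. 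For this last point I would invoke the characterisation of the $n$-pushout from \cite[Prop.\ 4.8]{Jasso-n-abelian-and-n-exact-categories} in terms of a concatenated complex being an $\CX$-admissible $n$-exact sequence --- a property manifestly preserved by $\SF$ --- together with the fact that such an $n$-pushout is unique up to homotopy equivalence in $\com^{n}_{(-,C)}$. Granting (ii), properties (i)--(iii) and the additivity of $\SF$ show that $\SF$ is an $n$-exangulated functor, completing the proof.
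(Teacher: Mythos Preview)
Your outline matches the paper's proof almost exactly: the same definition of $\Gamma$, the same verification of well-definedness and realisation-compatibility, and the $(\Leftarrow)$ direction is handled the same way (your explicit appeal to closure of $\CX'$ under weak isomorphisms just unpacks what the paper leaves implicit).

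The one place where your sketch diverges is the justification of naturality in step~(ii). You propose to show that $\SF$ carries $n$-pushouts to $n$-pushouts by invoking a ``characterisation of the $n$-pushout from \cite[Prop.~4.8]{Jasso-n-abelian-and-n-exact-categories} in terms of a concatenated complex being $\CX$-admissible $n$-exact''. But Prop.~4.8 does not give such a characterisation: it only says that the $n$-pushout of an $\CX$-admissible sequence can be completed to a morphism of $\CX$-admissible $n$-exact sequences. The defining property of an $n$-pushout is that the mapping cone has a specified $n$-cokernel, and there is no reason an $n$-exact functor (in the sense of Definition~\ref{def:n-exact-functor}) should preserve arbitrary $n$-cokernels. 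So ``$\SF$ preserves $n$-pushouts'' is neither what you need nor something you can read off directly.

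The paper sidesteps this. Rather than showing $\SF_{\com}W^{\raisebox{0.5pt}{\scalebox{0.6}{$\bullet$}}}$ \emph{is} an $n$-pullback, it just observes that both $\SF_{\com}W^{\raisebox{0.5pt}{\scalebox{0.6}{$\bullet$}}}$ and the genuine $n$-pullback $Z^{\raisebox{0.5pt}{\scalebox{0.6}{$\bullet$}}}$ are $\CX'$-admissible $n$-exact sequences equipped with morphisms to $\SF_{\com}X^{\raisebox{0.5pt}{\scalebox{0.6}{$\bullet$}}}$ agreeing in degrees $0$ and $n+1$. The factorisation property of $n$-pullbacks (the dual of \cite[Prop.~4.9]{Jasso-n-abelian-and-n-exact-categories}) then produces comparison maps in both directions fixing degree~$0$, and \cite[Lem.~2.1]{Jasso-n-abelian-and-n-exact-categories} shows these are mutually inverse up to homotopy. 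This is precisely the ``uniqueness up to homotopy'' you mention at the end of your sketch, so your instinct is right---you just need Prop.~4.9 and Lem.~2.1 rather than Prop.~4.8 as the engine.
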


\begin{proof}

$(\Rightarrow)$ Suppose $\SF$ is an $n$-exact functor. For $X^{0},X^{n+1}$ in $\CC$, define $\Gamma_{(X^{n+1},X^{0})}\colon \BE(X^{n+1},X^{0})\to \BE'(\SF X^{n+1},\SF X^{0})$ by 
$\Gamma_{(X^{n+1},X^{0})}([X^{\raisebox{0.5pt}{\scalebox{0.6}{$\bullet$}}}])\deff [\SF_{\com} X^{\raisebox{0.5pt}{\scalebox{0.6}{$\bullet$}}}]$. 
Note that this is a well-defined map since the additive functor $\SF_{\com}$ preserves homotopies and since $\SF_{\com}(\CX)\subseteq \CX'$ as $\SF$ is $n$-exact.  
Now let us show that the collection of homomorphisms  $\Gamma=\{\Gamma_{(X^{n+1},X^{0})}\}_{(X^{n+1},X^{0})\in\CC^{\op}\times\CC}$ gives a natural transformation $\BE(-,-)\Rightarrow \BE'(\SF-,\SF-)$. Let $f\colon Y^{n+1}\to X^{n+1}$ and $g\colon X^{0}\to Y^{0}$ be arbitrary morphisms in $\CC$. 
Note that $\BE(f,g) = \BE(Y^{n+1},g)\circ\BE(f,X^{0})$ and $\BE'(\SF f,\SF g) = \BE'(\SF Y^{n+1},\SF g)\circ\BE'(\SF f,\SF X^{0})$, so it is enough to show that the squares 
\begin{equation}\label{eqn:n-exact-functor-implies-n-exangulated-functor-natural-transformation-for-f}
\begin{tikzcd}[column sep=2cm]
\BE(X^{n+1},X^{0}) \arrow{r}{\Gamma_{(X^{n+1},X^{0})}} \arrow{d}{\BE(f,X^{0})} & 
\BE'(\SF X^{n+1},\SF X^{0}) \arrow{d}{\BE'(\SF f, \SF X^{0})}\\
\BE(Y^{n+1},X^{0}) \arrow{r}{\Gamma_{(Y^{n+1},X^{0})}}&
\BE'(\SF Y^{n+1},\SF X^{0})
\end{tikzcd}\end{equation}
and 
\begin{equation}\label{eqn:n-exact-functor-implies-n-exangulated-functor-natural-transformation-for-g}
\begin{tikzcd}[column sep=2cm]
\BE(Y^{n+1},X^{0}) \arrow{r}{\Gamma_{(Y^{n+1},X^{0})}} \arrow{d}{\BE(Y^{n+1},g)} & 
\BE'(\SF Y^{n+1},\SF X^{0}) \arrow{d}{\BE'(\SF Y^{n+1},\SF g)}\\
\BE(Y^{n+1},Y^{0}) \arrow{r}{\Gamma_{(Y^{n+1},Y^{0})}}&
\BE'(\SF Y^{n+1},\SF X^{0})
\end{tikzcd}\end{equation}
commute.

We only show that \eqref{eqn:n-exact-functor-implies-n-exangulated-functor-natural-transformation-for-f} commutes as the commutativity of \eqref{eqn:n-exact-functor-implies-n-exangulated-functor-natural-transformation-for-g} can be shown similarly. 
Thus, let $[X^{\raisebox{0.5pt}{\scalebox{0.6}{$\bullet$}}}]\in\BE(X^{n+1},X^{0})$ be arbitrary. Then $\Gamma_{(X^{n+1},X^{0})}([X^{\raisebox{0.5pt}{\scalebox{0.6}{$\bullet$}}}])=[\SF_{\com} X^{\raisebox{0.5pt}{\scalebox{0.6}{$\bullet$}}}]$, and $\BE'(\SF f,\SF X^{0})([\SF_{\com} X^{\raisebox{0.5pt}{\scalebox{0.6}{$\bullet$}}}])=[Z^{\raisebox{0.5pt}{\scalebox{0.6}{$\bullet$}}}]$ is obtained by forming an $n$-pullback of $(\SF d_{X}^{1},\ldots,\SF d_{X}^{n})$ along $\SF f$, where $Z^{0}=\SF X^{0}$ and $Z^{n+1}=\SF Y^{n+1}$. 
By the dual of \cite[Prop.\ 4.8]{Jasso-n-abelian-and-n-exact-categories}, this yields a morphism $h^{\raisebox{0.5pt}{\scalebox{0.6}{$\bullet$}}}\colon Z^{\raisebox{0.5pt}{\scalebox{0.6}{$\bullet$}}}\to \SF_{\com} X^{\raisebox{0.5pt}{\scalebox{0.6}{$\bullet$}}}$ of $\CX'$-admissible $n$-exact sequences, where $h^{0}=1_{\SF X^{0}}$ and $h^{n+1}=\SF f$. 

On the other hand, $\BE(f,X^{0})([X^{\raisebox{0.5pt}{\scalebox{0.6}{$\bullet$}}}])=[W^{\raisebox{0.5pt}{\scalebox{0.6}{$\bullet$}}}]$ is given by taking an $n$-pullback of $(d_{X}^{1},\ldots,d_{X}^{n})$ along $f$, where $W^{0}=X^{0}$ and $W^{n+1}=Y^{n+1}$. 
By the dual of \cite[Prop.\ 4.8]{Jasso-n-abelian-and-n-exact-categories}, there is a morphism $f^{\raisebox{0.5pt}{\scalebox{0.6}{$\bullet$}}}\colon W^{\raisebox{0.5pt}{\scalebox{0.6}{$\bullet$}}}\to X^{\raisebox{0.5pt}{\scalebox{0.6}{$\bullet$}}}$ of $\CX$-admissible $n$-exact sequences such that $f^{0}=1_{X^{0}}$ and $f^{n+1}=f$. 
Note that $\Gamma_{(Y^{n+1},X^{0})}([W^{\raisebox{0.5pt}{\scalebox{0.6}{$\bullet$}}}])=[\SF_{\com} W^{\raisebox{0.5pt}{\scalebox{0.6}{$\bullet$}}}]$ and that, since $\SF$ is an $n$-exact functor, $\SF_{\com} f^{\raisebox{0.5pt}{\scalebox{0.6}{$\bullet$}}}\colon \SF_{\com} W^{\raisebox{0.5pt}{\scalebox{0.6}{$\bullet$}}}\to \SF_{\com} X^{\raisebox{0.5pt}{\scalebox{0.6}{$\bullet$}}}$ is a morphism of $\CX'$-admissible $n$-exact sequences.

By the dual of \cite[Prop.\ 4.9]{Jasso-n-abelian-and-n-exact-categories}, there exists a morphism $p^{\raisebox{0.5pt}{\scalebox{0.6}{$\bullet$}}}\colon Z^{\raisebox{0.5pt}{\scalebox{0.6}{$\bullet$}}}\to \SF_{\com} W^{\raisebox{0.5pt}{\scalebox{0.6}{$\bullet$}}}$ such that $p^{0}=h^{0}=1_{\SF X^{0}}$, and there exists a morphism $q^{\raisebox{0.5pt}{\scalebox{0.6}{$\bullet$}}}\colon \SF_{\com} W^{\raisebox{0.5pt}{\scalebox{0.6}{$\bullet$}}}\to Z^{\raisebox{0.5pt}{\scalebox{0.6}{$\bullet$}}}$ such that $q^{0}=\SF f^{0}=1_{\SF X^{0}}$. 
In particular, we then have a morphism $q^{\raisebox{0.5pt}{\scalebox{0.6}{$\bullet$}}}p^{\raisebox{0.5pt}{\scalebox{0.6}{$\bullet$}}}\colon Z^{\raisebox{0.5pt}{\scalebox{0.6}{$\bullet$}}}\to Z^{\raisebox{0.5pt}{\scalebox{0.6}{$\bullet$}}}$ with $q^{0}p^{0}=1_{\SF X^{0}}=1_{Z^{0}}$, 
and a morphism $p^{\raisebox{0.5pt}{\scalebox{0.6}{$\bullet$}}}q^{\raisebox{0.5pt}{\scalebox{0.6}{$\bullet$}}}\colon \SF_{\com} W^{\raisebox{0.5pt}{\scalebox{0.6}{$\bullet$}}}\to \SF_{\com} W^{\raisebox{0.5pt}{\scalebox{0.6}{$\bullet$}}}$ with $p^{0}q^{0}=1_{\SF X^{0}}=1_{\SF_{\com} W^{0}}$. 
Hence, by \cite[Lem.\ 2.1]{Jasso-n-abelian-and-n-exact-categories}, we have that $q^{\raisebox{0.5pt}{\scalebox{0.6}{$\bullet$}}}p^{\raisebox{0.5pt}{\scalebox{0.6}{$\bullet$}}}\sim 1_{Z^{\raisebox{0.5pt}{\scalebox{0.6}{$\bullet$}}}}$ and $p^{\raisebox{0.5pt}{\scalebox{0.6}{$\bullet$}}}q^{\raisebox{0.5pt}{\scalebox{0.6}{$\bullet$}}}\sim 1_{\SF_{\com} W^{\raisebox{0.5pt}{\scalebox{0.6}{$\bullet$}}}}$. 
Thus, the homotopy equivalence classes $[Z^{\raisebox{0.5pt}{\scalebox{0.6}{$\bullet$}}}]$ and $[\SF_{\com} W^{\raisebox{0.5pt}{\scalebox{0.6}{$\bullet$}}}]$ coincide, so \eqref{eqn:n-exact-functor-implies-n-exangulated-functor-natural-transformation-for-f} commutes and $\Gamma$ is a natural transformation.

Lastly, suppose $\fs(\delta)=[X^{\raisebox{0.5pt}{\scalebox{0.6}{$\bullet$}}}]$, which means $\delta=[X^{\raisebox{0.5pt}{\scalebox{0.6}{$\bullet$}}}]$ by Example \ref{exa:n-exact-category-is-n-exangulated}. 
As defined above, $\Gamma_{(X^{n+1},X^{0})}(\delta)=\Gamma_{(X^{n+1},X^{0})}([X^{\raisebox{0.5pt}{\scalebox{0.6}{$\bullet$}}}])= [\SF_{\com} X^{\raisebox{0.5pt}{\scalebox{0.6}{$\bullet$}}}]$ so 
$\fs'(\Gamma _{(X^{n+1},X^{0})}(\delta))=[\SF_{\com} X^{\raisebox{0.5pt}{\scalebox{0.6}{$\bullet$}}}]$.

$(\Leftarrow)$ Conversely, suppose $\SF$ is an $n$-exangulated functor. 
We must show that $\SF_{\com}(\CX)\subseteq\CX'$. 
Let 
\[
\begin{tikzcd}
X^{0}\arrow[tail]{r}{d^{0}}& X^{1} \arrow{r}{d^{1}}& \cdots\arrow{r}{d^{n-1}}&X^{n}\arrow[two heads]{r}{d^{n}}& X^{n+1}
\end{tikzcd}
\]
be an arbitrary $\CX$-admissible $n$-exact sequence. 
Set $\delta\deff[X^{\raisebox{0.5pt}{\scalebox{0.6}{$\bullet$}}}]\in\BE(X^{n+1},X^{0})$. 
Then $\fs(\delta)=[X^{\raisebox{0.5pt}{\scalebox{0.6}{$\bullet$}}}]$ implies $\fs'(\Gamma_{(X^{n+1},X^{0})}(\delta))=[\SF_{\com} X^{\raisebox{0.5pt}{\scalebox{0.6}{$\bullet$}}}]\in\BE'(\SF X^{n+1},\SF X^{0})$ as $\SF$ is $n$-exangulated. 
Hence, $\SF_{\com} X^{\raisebox{0.5pt}{\scalebox{0.6}{$\bullet$}}}$ is an $\CX'$-admissible $n$-exact sequence and we are done.

\end{proof}

We note here that the notion of an $n$-exangulated functor is currently being generalised. In ongoing with J.\ Haugland and M.\ H.\ Sand{\o{}}y, the authors are considering functors from an $n$-exangulated category to an $m$-exangulated category, which are compatible with the respective structures and where $n$ and $m$ might differ. See \cite{Bennett-TennenhausHauglandSandoyShah-the-category-of-extensions-and-n-m-exangulated-functors}.


\medskip
\section{Transport of structure}
\label{sec:transport-of-structure}

In \S\ref{sec:transport-of-structure}, let $n\geq 1$ be a positive integer. 
We show that an equivalence of categories preserves the structures discussed in \S\ref{sec:higher-structures}. In all cases, we will need the following well-known result, which is easily verified.

\begin{thm}
\label{thm:transport-of-additive-structure}

Let $\SF\colon \CC \to \CC'$ be an equivalence of categories and suppose $\CC$ is an additive category. Then $\CC'$ is an additive category and $\SF$ is an additive equivalence.

\end{thm}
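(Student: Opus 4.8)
The plan is to transport both the additive and the abelian-group structures along the equivalence $\SF$, using a quasi-inverse $\SG\colon\CC'\to\CC$ together with natural isomorphisms $\eta\colon\idfunc{\CC}\overset{\iso}{\Rightarrow}\SG\SF$ and $\eps\colon\SF\SG\overset{\iso}{\Rightarrow}\idfunc{\CC'}$. First I would recall that since $\SF$ is fully faithful, for all $X,Y\in\CC$ the map $\SF_{X,Y}\colon\CC(X,Y)\to\CC'(\SF X,\SF Y)$ is a bijection; and since $\SF$ is dense, every object of $\CC'$ is isomorphic to some $\SF X$. Using these bijections I would pull back the abelian-group structure on each $\CC(X,Y)$ to define a candidate addition on $\CC'(\SF X,\SF Y)$, and then transport along the isomorphisms $\eps$ to every hom-set $\CC'(A,B)$; concretely, $\CC'(A,B)\iso\CC'(\SF\SG A,\SF\SG B)\iso\CC(\SG A,\SG B)$, and one uses the composite bijection to define $f+g$ for $f,g\in\CC'(A,B)$. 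One then checks this is independent of the choices (it is, because any two such bijections differ by conjugation by isomorphisms, which are automatically group homomorphisms once addition is defined compatibly) and that composition in $\CC'$ is bilinear — this follows because composition in $\CC$ is bilinear and $\SF$, being a functor, respects composition, so the transported operations inherit bilinearity.

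Next I would verify the remaining additive axioms. For the zero object: $\SF$ preserves the zero object because $\CC'(\SF 0,\SF 0)\iso\CC(0,0)=0$, so $\SF 0$ is a zero object in $\CC'$, and by density every $\CC'$ has one. For finite biproducts: given $A,B\in\CC'$, choose $X,Y\in\CC$ with $\SF X\iso A$, $\SF Y\iso B$; then $\SF(X\oplus Y)$ together with the images of the structure morphisms is a biproduct of $\SF X,\SF Y$ (the biproduct identities $p_i s_j=\delta_{ij}$, $s_1p_1+s_2p_2=1$ are preserved by the functor and by the transported addition), hence transports to a biproduct $A\oplus B$ via the chosen isomorphisms. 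Thus $\CC'$ is additive with the transported structure, and by construction $\SF$ is an additive functor; being already an equivalence, it is an additive equivalence.

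The one point requiring genuine care — the main (minor) obstacle — is \emph{well-definedness}: showing that the transported addition on $\CC'(A,B)$ does not depend on the chosen objects $X,Y$ with $\SF X\iso A$, $\SF Y\iso B$, nor on the chosen isomorphisms. The clean way to finish is to avoid choices altogether by defining the addition directly through the canonical bijection $\CC'(A,B)\xrightarrow{\SG}\CC(\SG A,\SG B)$ induced by the fully faithful quasi-inverse $\SG$ (this uses that $\SG$ is also an equivalence, hence fully faithful), and then to observe a posteriori that $\SF$ respects this structure because $\SF_{X,Y}$ and $\SG_{\SF X,\SF Y}$ are mutually inverse up to the natural isomorphism $\eta$, and conjugation by an isomorphism is an additive bijection. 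With the operation pinned down canonically, bilinearity of composition, existence of zero, and existence of biproducts all reduce to transporting the corresponding statements in $\CC$ through the functor $\SG$ (or $\SF$), which is routine. Since all of this is standard, I would state it briefly and refer the reader to, e.g., the usual treatment of equivalences of additive categories.
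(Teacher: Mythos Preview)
Your proposal is correct and follows the standard route; the paper itself does not supply a proof of this statement, merely remarking that it ``is easily verified'' and treating it as well-known. Your sketch is therefore entirely consistent with what the paper intends, and the cleaner variant you settle on at the end---defining the addition via the fully faithful quasi-inverse $\SG$ so that no choices are made---is exactly the way to avoid the well-definedness bookkeeping.
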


For any category $\CA$, we will denote the \emph{identity functor} of $\CA$ by $\idfunc{\CA}$; see \cite[p.\ 6]{Borceux-handbook-1}.

\begin{setup}
\label{con:equivalence-unit-counit-setup}

Suppose $\SF\colon \CC\to \CC'$ is an equivalence. 
Then there exists a quasi-inverse $\SG\colon \CC'\to\CC$ for $\SF$. 
In this situation, 
the \emph{unit} $\Phi\colon \idfunc{\CC'} \Longrightarrow\SF\SG$ 
and 
the \emph{counit} $\Psi\colon \SG\SF{\Longrightarrow}\idfunc{\CC}$ are natural isomorphisms, and 
the \emph{counit-unit equations} (or \emph{triangle identities}) 
\begin{equation}
\label{eqn:triangle-identities}
	\SF\Psi\circ \Phi_{\SF}=1_{\SF} 
	\hspace{1cm}
	\text{and}
	\hspace{1cm} \Psi_{\SG}\circ \SG\Phi=1_{\SG},
\end{equation}
are satisfied; see \cite[Prop.\ 3.4.1]{Borceux-handbook-1}. 
The natural transformations $\SF\Psi, \Phi_{\SF}, \Psi_{\SG}$ and $\SG\Phi$ are obtained from \emph{whiskering}; 
see \cite[p.\ 24]{BaezBaratinFreidelWise-infinite-dimensional-representations-of-two-groups}. 

\end{setup}

We will refer to Setup \ref{con:equivalence-unit-counit-setup} in the main results of \S\S\ref{sec:transport-n-angulated-categories}--\ref{sec:transport-n-exangulated-categories}.


\smallskip
\subsection{\texorpdfstring{$(n+2)$}{n+2}-angulated categories}
\label{sec:transport-n-angulated-categories}

Recall that if $(\CC, \sus, \CT)$ is a weak $(n+2)$-angulated category, then in this article we only require $\sus$ to be an autoequivalence of $\CC$. 
Lemma \ref{lem:additive-category-with-autoequivalence-equivalent-to-n-angulated-category-is-also-n-angulated} is a key ingredient in the proof of Theorem \ref{thm:transport-of-n-angulated-structure}.

\begin{lem}
\label{lem:additive-category-with-autoequivalence-equivalent-to-n-angulated-category-is-also-n-angulated}

Let $(\CC, \sus, \CT)$ be a weak (pre-)$(n+2)$-angulated category and let $\CC'$ be a category equipped with an autoequivalence $\sus'$. 
Suppose $\SF\colon \CC \to \CC'$ is an equivalence and assume that there is a natural isomorphism $\Theta\colon \SF\sus \overset{\iso}{\Longrightarrow} \sus'\SF$. 
Let $\CT'$ be the collection of $(n+2)$-$\sus'$-sequences consisting of those  isomorphic to any $(n+2)$-$\sus'$-sequence of the form 
\begin{equation}\label{eqn:defining-n-angle-in-CC-prime}
\begin{tikzcd}[column sep=1.9cm]\SF X^{0}\arrow{r}{\SF d_{X}^{0}}&\SF X^{1}\arrow{r}{\SF d_{X}^{1}}&\cdots\arrow{r}{\SF d_{X}^{n}}&\SF X^{n+1}\arrow{r}{\Theta_{X^{0}}\circ\SF d_{X}^{n+1}}&\sus'\SF  X^{0},\end{tikzcd}\end{equation}
where $\begin{tikzcd}[column sep=0.7cm]X^{0}\arrow{r}{d_{X}^{0}}&X^{1}\arrow{r}{d_{X}^{1}}&\cdots\arrow{r}{d_{X}^{n}}&X^{n+1}\arrow{r}{d_{X}^{n+1}}&\sus X^{0}\end{tikzcd}$ is an $(n+2)$-angle in $\CC$. Then $(\CC', \sus', \CT')$ is a weak (pre-)$(n+2)$-angulated category and $\SF$ is an $(n+2)$-angle equivalence.

\end{lem}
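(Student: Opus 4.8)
The plan is to verify the axioms \ref{F1}--\ref{F4} for $(\CC',\sus',\CT')$ by transporting each along $\SF$, using $\Theta$ to account for the discrepancy between $\SF\sus$ and $\sus'\SF$ in the last morphism of an $(n+2)$-$\sus'$-sequence. The key observation is that $\CT'$ is, by construction, closed under isomorphism, so throughout I may replace any $(n+2)$-$\sus'$-sequence by an isomorphic one of the standard form \eqref{eqn:defining-n-angle-in-CC-prime}. A second recurring tool is that $\SF$, being an equivalence, is full, faithful and essentially surjective: this lets me pull any object, morphism, or commutative square in $\CC'$ back to $\CC$ (up to isomorphism), apply the corresponding axiom for $\CT$, and push the result forward.

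\textbf{Setting up.} First I would fix a quasi-inverse $\SG$ and the unit/counit as in Setup \ref{con:equivalence-unit-counit-setup}, and record the auxiliary natural isomorphism $\Theta^{-1}$ together with the compatibility of $\Theta$ with $\sus$ (e.g. $\Theta_{\sus X}$ versus $\sus'\Theta_X$ is not needed, but $\SF\sus d$ intertwined with $\sus'\SF d$ via naturality of $\Theta$ is used repeatedly). I would also note that because $\SF$ is additive (Theorem \ref{thm:transport-of-additive-structure}) and $\Theta$ is additive-compatible, finite direct sums of standard sequences \eqref{eqn:defining-n-angle-in-CC-prime} are again of that form, up to the canonical isomorphism $\SF(X\oplus Y)\iso \SF X\oplus \SF Y$; this handles the ``closed under direct sums'' half of \ref{F1}(a). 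For direct summands, given a sequence in $\CT'$ that is a summand of a standard one, I pull the splitting idempotents back through $\SF$ (using fullness/faithfulness), use that $\CT$ is closed under summands, and push forward. Axiom \ref{F1}(b) follows since $\SF$ sends the trivial $(n+2)$-$\sus$-sequence on $X$ to the trivial $(n+2)$-$\sus'$-sequence on $\SF X$ (the last map becomes $\Theta_X\circ \SF(1_{\sus X} \text{ data})$, which one checks is the required identity up to the isomorphism built into $\CT'$), and \ref{F1}(c) follows by pulling an arbitrary morphism $\SF X^0 \to \SF X^1$ in $\CC'$ back to $\CC$ (using fullness and essential surjectivity), completing it to an $(n+2)$-angle in $\CC$ by \ref{F1}(c) for $\CT$, and applying $\SF$.

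\textbf{Rotation and completion axioms.} For \ref{F2}, I would check that the left rotation of a standard sequence \eqref{eqn:defining-n-angle-in-CC-prime}, after inserting the sign $(-1)^n$ and the suspended differential, is isomorphic to the standard sequence associated to the left rotation of $X^\bullet$ in $\CC$ — the only subtlety is tracking where $\Theta$ and $\sus'\Theta$ enter in the new final morphism $(-1)^n\sus'(\SF d_X^0)$ versus $\SF((-1)^n\sus d_X^0)$, which is exactly resolved by naturality of $\Theta$. Since both the sequence and its rotation lie in $\CT'$ precisely when the corresponding sequences lie in $\CT$, and $\CT$ satisfies \ref{F2}, this transfers. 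For \ref{F3} (and \ref{F4}), given a commutative diagram \eqref{eqn:F3-diagram-1} in $\CC'$ with rows in $\CT'$, I first replace the rows by standard sequences $\SF X^\bullet$, $\SF Y^\bullet$ (this replacement is where one must be a little careful: an isomorphism of $(n+2)$-$\sus'$-sequences conjugates the given vertical maps into new ones, and I must check the resulting square still has the ``$\sus'\SF f^0$'' shape in the last column — again naturality of $\Theta$ does this). Then I pull $f^0, f^1$ back along $\SF$ to morphisms in $\CC$ forming a commutative square between $X^\bullet$ and $Y^\bullet$ (faithfulness gives commutativity in $\CC$ from commutativity in $\CC'$), apply \ref{F3} (resp. \ref{F4}) for $\CT$ to complete it with $f^2,\dots,f^{n+1}$ (resp. with a cone in $\CT$), and apply $\SF$; the cone of $\SF_{\com}$ applied to the completion is, via the block-matrix formula for $d_C^i$ and additivity of $\SF$, isomorphic to the standard sequence attached to the cone in $\CC$, hence lies in $\CT'$. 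Finally, $\SF$ is an $(n+2)$-angle equivalence essentially by definition of $\CT'$: it sends $(n+2)$-angles to $(n+2)$-angles with the natural isomorphism $\Theta$, and conversely an $(n+2)$-angle in $\CC'$ is isomorphic to some $\SF X^\bullet$ with $X^\bullet \in \CT$, so pulling back along $\SF$ (full and faithful) shows $X^\bullet\in\CT$ forces the original to come from an $(n+2)$-angle.

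\textbf{Main obstacle.} I expect the only genuinely delicate point to be the bookkeeping in \ref{F3}/\ref{F4}: one must verify that after replacing the rows of \eqref{eqn:F3-diagram-1} by \emph{standard} sequences \eqref{eqn:defining-n-angle-in-CC-prime}, the given morphism of $(n+2)$-$\sus'$-sequences really does descend (via fullness) to a genuine morphism of $(n+2)$-$\sus$-sequences in $\CC$ — in particular that the commutativity of the last square, which involves $\Theta$ on both sides, translates into the naked commutativity of $\sus f^0\circ d_X^{n+1} = d_Y^{n+1}\circ f^{n+1}$ in $\CC$. This is a diagram chase using naturality of $\Theta$ and faithfulness of $\SF$, with no conceptual difficulty, but it is the step most prone to sign or index errors. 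Everything else is a routine ``transport along an equivalence'' argument of the kind already used implicitly for Theorem \ref{thm:transport-of-additive-structure}.
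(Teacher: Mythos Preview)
Your proposal is correct and follows essentially the same approach as the paper's proof: reduce to standard sequences of the form \eqref{eqn:defining-n-angle-in-CC-prime} using closure of $\CT'$ under isomorphism, then transport each axiom \ref{F1}--\ref{F4} across $\SF$ via fullness, faithfulness, essential surjectivity, and naturality of $\Theta$. The only cosmetic difference is that the paper invokes the quasi-inverse $\SG$ and unit $\Phi$ explicitly for \ref{F1}(b)--(c) where you appeal to essential surjectivity, and your ``main obstacle'' (the last-square bookkeeping in \ref{F3}/\ref{F4}) is handled in the paper exactly as you predict, by a one-line application of naturality of $\Theta$ to $g^{0}$.
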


\begin{proof}

Let us fix the notation as in Setup \ref{con:equivalence-unit-counit-setup}. 
By Theorem \ref{thm:transport-of-additive-structure}, we have that $\CC'$ is an additive category and $\SF$ is an additive equivalence. 
Note that this implies $\sus'$ is additive (also by Theorem \ref{thm:transport-of-additive-structure}). 
Suppose $(\CC,\sus,\CT)$ is a weak pre-$(n+2)$-angulated category, and let us first verify axioms \ref{F1}--\ref{F3} from Definition \ref{def:weak-pre-and-n-angulated-category} for $(\CC',\sus',\CT')$. 

One can easily verify that $\CT'$ is closed under taking finite direct sums and direct summands, using that $\SF$, $\sus$ and $\sus'$ are all additive functors and that $\CT$ satisfies \ref{F1}.

Let $X'\in\CC'$ be arbitrary. 
Then 
\[
\begin{tikzcd}[column sep=0.8cm]\SG X' \arrow{r}{1_{\SG X'}} &\SG X' \arrow{r} &0\arrow{r}&\cdots\arrow{r}&0\arrow{r} &\sus\SG X'\end{tikzcd}
\]
is an $(n+2)$-angle in $\CC$. 
Thus, 
\[
\begin{tikzcd}[column sep=1.2cm]
\SF\SG X' \arrow{r}{1_{\SF\SG X'}} &\SF\SG X' \arrow{r} &0\arrow{r}&\cdots\arrow{r}&0\arrow{r} &\sus'(\SF\SG X')\end{tikzcd}
\]
is an $(n+2)$-$\sus'$-sequence in $\CT'$. 
Using the natural isomorphism $\Phi\colon\idfunc{\CC'}\overset{\iso}{\Longrightarrow}\SF\SG$, we see that 
\[
\begin{tikzcd}[column sep=0.7cm]X' \arrow{r}{1_{X'}} &X' \arrow{r} &0\arrow{r}&\cdots\arrow{r}&0\arrow{r} &\sus'X'\end{tikzcd}
\]
is also an $(n+2)$-$\sus'$-sequence in $\CT'$.

Let $d_{X'}^{0}\colon X'^{0}\to X'^{1}$ be an arbitrary morphism in $\CC'$, and consider the morphism $\SG d_{X'}^{0}\colon \SG X'^{0} \to \SG X'^{1}$ in $\CC$. Using \ref{F1}(c) for $\CC$ we obtain an $(n+2)$-angle 
\[
\begin{tikzcd}
\SG X'^{0} \arrow{r}{\SG d_{X'}^{0}}& \SG X'^{1} \arrow{r}{d_{X}^{1}}& X^{2} \arrow{r}{d_{X}^{2}}& \cdots \arrow{r}{d_{X}^{n}}&X^{n+1} \arrow{r}{d_{X}^{n+1}}& \sus \SG X'^{0}.
\end{tikzcd}
\]
So, the $(n+2)$-$\sus'$-sequence 
\[
\hspace*{-0.15cm}
\begin{tikzcd}[column sep = 1.65cm]
\SF\SG X'^{0} \arrow{r}{\SF\SG d_{X'}^{0}}& \SF\SG X'^{1} \arrow{r}{\SF d_{X}^{1}}& \SF X^{2} \arrow{r}{\SF d_{X}^{2}}& \cdots \arrow{r}{\SF d_{X}^{n}}&\SF X^{n+1} \arrow{r}{\Theta_{\SG X'^{0}}\SF d_{X}^{n+1}}& \sus' \SF\SG X'^{0}.
\end{tikzcd}
\]
belongs to $\CT'$. 
Set $a\deff\Theta_{\SG X'^{0}}\SF d_{X}^{n+1}$. 
Then the diagram 
\[
\hspace*{-3pt}
\begin{tikzcd}[column sep = 1.6cm, row sep=1cm]
\SF\SG X'^{0} \arrow{r}{\SF\SG d_{X'}^{0}}\arrow{d}{\tensor[]{\Phi}{_{X'^{0}}^{-1}}}
& \SF\SG X'^{1} \arrow{r}{\SF d_{X}^{1}}\arrow{d}{\tensor[]{\Phi}{_{X'^{1}}^{-1}}}& \SF X^{2} \arrow{r}{\SF d_{X}^{2}}\arrow[equals]{d}& \cdots \arrow{r}{\SF d_{X}^{n}}&\SF X^{n+1} \arrow{r}{a}\arrow[equals]{d}& \sus' \SF\SG X'^{0}\arrow{d}{\tensor[]{\sus'\Phi}{_{X'^{0}}^{-1}}}\\
X'^{0} \arrow{r}{d_{X'}^{0}}& X'^{1} \arrow{r}{(\SF d_{X}^{1})\circ\Phi_{X'^{1}}}& \SF X^{2} \arrow{r}{\SF d_{X}^{2}}& \cdots \arrow{r}{\SF d_{X}^{n}}&\SF X^{n+1} \arrow{r}{(\tensor[]{\sus'\Phi}{_{X'^{0}}^{-1}})\circ a}& \sus'X'^{0}
\end{tikzcd}
\] 
commutes and all the vertical morphisms are isomorphisms, since $\Phi$ is a natural isomorphism. 
Thus the diagram is an isomorphism of $(n+2)$-$\sus'$-sequences and, hence, \ref{F1} holds for $(\CC',\sus',\CT')$.

By using suitable $(n+2)$-$\sus'$-sequence isomorphisms, we need only show the remaining axioms hold for the \emph{defining} $(n+2)$-$\sus'$-sequences of $\CT'$, i.e. $(n+2)$-$\sus'$-sequences of the form \eqref{eqn:defining-n-angle-in-CC-prime}, namely  
\[
\begin{tikzcd}[column sep=1.9cm]\SF X^{0}\arrow{r}{\SF d_{X}^{0}}&\SF X^{1}\arrow{r}{\SF d_{X}^{1}}&\cdots\arrow{r}{\SF d_{X}^{n}}&\SF X^{n+1}\arrow{r}{\Theta_{X^{0}}\circ\SF d_{X}^{n+1}}&\sus'\SF  X^{0}\end{tikzcd}
\] 
where $\begin{tikzcd}[column sep=0.7cm] X^{0}\arrow{r}{d_{X}^{0}}&X^{1}\arrow{r}{d_{X}^{1}}&\cdots\arrow{r}{d_{X}^{n}}&X^{n+1}\arrow{r}{d_{X}^{n+1}}&\sus X^{0}\end{tikzcd}$ is an $(n+2)$-angle in $\CC$. 
We may apply \ref{F2} in $\CC$ yielding the $(n+2)$-angle 
\[
\begin{tikzcd}[column sep=1.6cm] X^{1}\arrow{r}{d_{X}^{1}}&X^{2}\arrow{r}{d_{X}^{2}}&\cdots\arrow{r}{d_{X}^{n}}&X^{n+1}\arrow{r}{d_{X}^{n+1}}&\sus X^{0}\arrow{r}{(-1)^{n}\sus d_{X}^{0}}&\sus X^{1}\end{tikzcd}
\]
in $\CC$. 
Then, setting $b\deff (-1)^{n}\Theta_{X^{1}}\SF \sus d_{X}^{0}$, the $(n+2)$-angle 
\[
\begin{tikzcd}[column sep=1.2cm] \SF X^{1}\arrow{r}{\SF d_{X}^{1}}&\SF X^{2}\arrow{r}{\SF d_{X}^{2}}&\cdots\arrow{r}{\SF d_{X}^{n}}&\SF X^{n+1}\arrow{r}{\SF d_{X}^{n+1}}&\SF\sus  X^{0}\arrow{r}{b}&\sus'\SF  X^{1}\end{tikzcd}
\] 
is in $\CT'$. 
In the diagram
\[
\hspace*{-2pt}
\begin{tikzcd}[column sep=1.85cm] \SF X^{1}\arrow{r}{\SF d_{X}^{1}}\arrow[equals]{d}&\SF X^{2}\arrow{r}{\SF d_{X}^{2}}\arrow[equals]{d}&\cdots\arrow{r}{\SF d_{X}^{n}}&\SF X^{n+1}\arrow{r}{\SF d_{X}^{n+1}}\arrow[equals]{d}&
\SF\sus  X^{0}\arrow{r}{b}\arrow{d}{\Theta_{X^{0}}}&
\sus'\SF  X^{1}\arrow[equals]{d}\\
\SF X^{1}\arrow{r}{\SF d_{X}^{1}}&\SF X^{2}\arrow{r}{\SF d_{X}^{2}}&\cdots\arrow{r}{\SF d_{X}^{n}}&\SF X^{n+1}\arrow{r}{\Theta_{X^{0}}\SF d_{X}^{n+1}}&\sus'\SF  X^{0}\arrow{r}
{(-1)^{n} \sus'\SF d_{X}^{0}}&\sus'\SF  X^{1}\end{tikzcd}
\]
the rightmost square commutes since $\Theta$ is a natural isomorphism. 
Hence, the diagram above is an $(n+2)$-$\sus'$-sequence isomorphism and the bottom row is in $\CT'$. 
Thus, we conclude that if an $(n+2)$-$\sus'$-sequence is in $\CT'$, then its left rotation also lies in $\CT'$. The converse is similar, and so \ref{F2} holds for $(\CC',\sus',\CT')$.

In order to show \ref{F3} is satisfied, suppose we are given a commutative diagram 
\begin{equation}\label{eqn:F3-for-C-prime}
\begin{tikzcd}[column sep=2.1cm]
\SF X^{0} \arrow{r}{\SF d_{X}^{0}}\arrow{d}{f^{0}}\commutes{dr}&
\SF X^{1} \arrow{r}{\SF d_{X}^{1}}\arrow{d}{f^{1}}&\cdots\arrow{r}{\SF d_{X}^{n}}&
\SF X^{n+1}\arrow{r}{\Theta_{X^{0}}\circ(\SF d_{X}^{n+1})}&\sus'\SF X^{0}\arrow{d}{\sus'f^{0}} \\
\SF Y^{0} \arrow{r}{\SF d_{Y}^{0}}&
\SF Y^{1} \arrow{r}{\SF d_{Y}^{1}}&\cdots\arrow{r}{\SF d_{Y}^{n}}&
\SF Y^{n+1}\arrow{r}{\Theta_{Y^{0}}\circ(\SF d_{Y}^{n+1})}&\sus'\SF Y^{0},
\end{tikzcd}
\end{equation}
in which the rows are defining $(n+2)$-$\sus'$-sequences in $\CT'$ and $f^{1} \circ \SF d_{X}^{0} = (\SF d_{Y}^{0}) \circ f^{0}$. Since $\SF$ is full, we have that $f^{0} = \SF g^{0}$ and $f^{1} = \SF g^{1}$ for some $g^{0}\colon X^{0}\to Y^{0}$ and $g^{1}\colon X^{1}\to Y^{1}$. Therefore, the identity $f^{1} \circ \SF d_{X}^{0} = (\SF d_{Y}^{0}) \circ f^{0}$ becomes $\SF (g^{1}d_{X}^{0}) = \SF (d_{Y}^{0}g^{0})$, and we see that  $g^{1}d_{X}^{0} = d_{Y}^{0}g^{0}$ in $\CC$ as $\SF$ is also faithful. 
Hence, by \ref{F3} for $\CC$, we obtain a morphism
\begin{equation}\label{eqn:F3-for-C}
\begin{tikzcd}[column sep=1.6cm]
X^{0} \arrow{r}{d_{X}^{0}}\arrow{d}{g^{0}}\commutes{dr}&
X^{1} \arrow{r}{d_{X}^{1}}\arrow{d}{g^{1}}&
X^{2} \arrow{r}{d_{X}^{2}}\arrow[dotted]{d}{g^{2}}&\cdots\arrow{r}&
X^{n+1}\arrow{r}{d_{X}^{n+1}}\arrow[dotted]{d}{g^{n+1}}&
\sus X^{0}\arrow{d}{\sus g^{0}} \\
Y^{0} \arrow{r}{d_{Y}^{0}}&
Y^{1} \arrow{r}{d_{Y}^{1}}&
Y^{2} \arrow{r}{d_{Y}^{2}}&\cdots\arrow{r}&
Y^{n+1}\arrow{r}{d_{Y}^{n+1}}&\sus Y^{0}
\end{tikzcd}
\end{equation} 
of $(n+2)$-$\sus$-sequences. 
Applying $\SF_{\com}$ and using the natural isomorphism $\Theta$ to adjust the rightmost square, we have a morphism 
\[
\begin{tikzcd}[column sep=1.6cm]
\SF X^{0} \arrow{r}{\SF d_{X}^{0}}\arrow{d}{f^{0}} \commutes{dr}&
\SF X^{1} \arrow{r}{\SF d_{X}^{1}}\arrow{d}{f^{1}}&
\SF X^{2} \arrow{r}{\SF d_{X}^{2}}\arrow[dotted]{d}{f^{2}}&
\cdots\arrow{r}&
\SF X^{n+1}\arrow{r}{\Theta_{X^{0}}\SF d_{X}^{n+1}}\arrow[dotted]{d}{f^{n+1}}&
\sus'\SF X^{0}\arrow{d}{\sus'\SF g^{0}=\sus'f^{0}}\\
\SF Y^{0} \arrow{r}{\SF d_{Y}^{0}}&
\SF Y^{1} \arrow{r}{\SF d_{Y}^{1}}&
\SF Y^{2} \arrow{r}{\SF d_{Y}^{2}}&
\cdots\arrow{r}&
\SF Y^{n+1}\arrow{r}{\Theta_{Y^{0}}\SF d_{Y}^{n+1}}&
\sus'\SF Y^{0}
\end{tikzcd}
\] 
of $(n+2)$-$\sus'$-sequences in $\CC'$, where $f^{i}\deff\SF g^{i}$ for $2\leq i\leq n+1$, so \ref{F3} holds for $(\CC',\sus',\CT')$. 

Altogether, we have shown \ref{F1}--\ref{F3} hold for $(\CC',\sus',\CT')$ whenever these axioms hold for $(\CC,\sus,\CT)$. 
That is, if $\CC$ is weak pre-$(n+2)$-angulated then so too is $\CC'$. 

Suppose now that $(\CC,\sus,\CT)$ also satisfies \ref{F4}; we will show $(\CC',\sus',\CT')$ satisfies \ref{F4}. 
To this end, fix morphisms $f^{0}, f^{1}$ in $\CC'$ such that \eqref{eqn:F3-for-C-prime} commutes. 
As before, this induces diagram \eqref{eqn:F3-for-C} in $\CC$, and 
since \ref{F4} holds for $\CC$, we may assume that $g^{2},\ldots,g^{n+1}$  were obtained in such a way that the cone $C^{\raisebox{0.5pt}{\scalebox{0.6}{$\bullet$}}}\deff C(g)^{\raisebox{0.5pt}{\scalebox{0.6}{$\bullet$}}}$ 
\[
\begin{tikzcd}
X^{1}\oplus Y^{0} \arrow{r}{d_{C}^{0}}&X^{2}\oplus Y^{1}\arrow{r}{d_{C}^{1}}& \cdots\arrow{r}{d_{C}^{n}}&\sus X^{0}\oplus Y^{n+1} \arrow{r}{d_{C}^{n+1}}&\sus X^{1}\oplus \sus Y^{0}
\end{tikzcd}
\] 
lies in $\CT$. 
Therefore, 
\[
\begin{tikzcd}[column sep=1cm,ampersand replacement = \&]
\SF X^{1}\oplus \SF Y^{0} \arrow{r}{\SF d_{C}^{0}}\&
\cdots\arrow{r}{\SF d_{C}^{n}}\&
\SF \sus X^{0}\oplus \SF Y^{n+1} \arrow{r}{c}\&
\sus' \SF X^{1}\oplus \sus'\SF  Y^{0}
\end{tikzcd}
\]
lies in $\CT'$, 
where 
\[
c\deff 
\begin{pmatrix}
\Theta_{X^{1}} & 0 \\
0 & \Theta_{Y^{0}}
\end{pmatrix}
\SF d_{C}^{n+1}
= 
\begin{pmatrix}
-\Theta_{X^{1}}\SF\sus d_{X}^{0} & 0 \\
\Theta_{Y^{0}}\SF\sus g^{0} & \Theta_{Y^{0}}\SF d_{Y}^{n+1}
\end{pmatrix}.
\]
One can then check that the diagram 
\[
\hspace*{-2pt}
\begin{tikzcd}[column sep=1.15cm, row sep = 1cm, ampersand replacement = \&]
\SF X^{1}\oplus \SF Y^{0} \arrow{r}{\SF d_{C}^{0}}\arrow[equals]{d}\&
\cdots\arrow{r}{\SF d_{C}^{n-1}}\&
\SF X^{n+1}\oplus \SF Y^{n}\arrow{r}{\SF d_{C}^{n}}\arrow[equals]{d}\&
\SF \sus X^{0}\oplus \SF Y^{n+1} \arrow{r}{c}\arrow{d}{d}\&
\sus' \SF X^{1}\oplus \sus'\SF Y^{0}\arrow[equals]{d}\\
\SF X^{1}\oplus \SF Y^{0} \arrow{r}{\SF d_{C}^{0}}\&
\cdots\arrow{r}{\SF d_{C}^{n-1}}\&
\SF X^{n+1}\oplus \SF Y^{n}\arrow{r}{d\circ\SF d_{C}^{n}}\&
\sus'\SF X^{0}\oplus \SF Y^{n+1} \arrow{r}{e}\&
\sus' \SF X^{1}\oplus \sus'\SF  Y^{0},
\end{tikzcd}
\]
where 
\[
d\deff
\begin{pmatrix}
\Theta_{X^{0}}&0 \\ 0 & 1_{\SF Y^{n+1}}\end{pmatrix} 
\hspace{1cm}
\text{and}
\hspace{1cm}
e\deff \begin{pmatrix}-\sus'\SF d_{X}^{0} & 0 \\ 
\sus'\SF g^{0} & \Theta_{Y^{0}}\SF d_{Y}^{n+1}\end{pmatrix},
\] 
gives an isomorphism of $(n+2)$-$\sus'$-sequences using the natural isomorphism $\Theta$ for morphisms $d_{X}^{0}$ and $g^{0}$. 
Since the top row lies in $\CT'$ and the bottom row is precisely the cone $C(f)^{\raisebox{0.5pt}{\scalebox{0.6}{$\bullet$}}}$ of the morphism $(f^{0},\ldots,f^{n+1})$ in $\CC'$, we see $C(f)^{\raisebox{0.5pt}{\scalebox{0.6}{$\bullet$}}}$ belongs to $\CT'$. Thus, \ref{F4} holds for $(\CC',\sus',\CT')$ whenever it holds for $(\CC,\sus,\CT)$.

Lastly, we immediately see that $\SF$ is an $(n+2)$-angulated functor by the existence of the natural isomorphism $\Theta\colon \SF\sus \overset{\iso}{\Longrightarrow}\sus'\SF$ and by how we defined the (pre-)$(n+2)$-angulation $\CT'$. 
Furthermore, $\SF$ is thus an $(n+2)$-angle equivalence.

\end{proof}

We are now in a position to prove transport of structure for weak (pre-)$(n+2)$-angulated categories.

\begin{thm}
\label{thm:transport-of-n-angulated-structure}

Let $(\CC, \sus, \CT)$ be a weak (pre-)$(n+2)$-angulated category and suppose $\SF\colon \CC \to \CC'$ is an equivalence. 
Then $\CC'$ is a weak (pre-)$(n+2)$-angulated category and $\SF$ is an $(n+2)$-angle equivalence.

\end{thm}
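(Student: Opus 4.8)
The plan is to deduce the theorem directly from Lemma~\ref{lem:additive-category-with-autoequivalence-equivalent-to-n-angulated-category-is-also-n-angulated}. To invoke that lemma we must equip $\CC'$ with an autoequivalence $\sus'$ together with a natural isomorphism $\Theta\colon \SF\sus \overset{\iso}{\Longrightarrow} \sus'\SF$; once this is in place, the lemma produces the (pre-)$(n+2)$-angulation $\CT'$, verifies the relevant axioms among \ref{F1}--\ref{F4} for $(\CC',\sus',\CT')$, and shows that $\SF$ is an $(n+2)$-angle equivalence, which is exactly the assertion.

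First I would fix notation as in Setup~\ref{con:equivalence-unit-counit-setup}: choose a quasi-inverse $\SG\colon\CC'\to\CC$ of $\SF$ with unit $\Phi\colon\idfunc{\CC'}\Rightarrow\SF\SG$ and counit $\Psi\colon\SG\SF\Rightarrow\idfunc{\CC}$, both natural isomorphisms. Then I would set $\sus'\deff\SF\sus\SG$. Since $\sus$ is assumed to be an autoequivalence of $\CC$ (as throughout \S\ref{sec:n-angulated-categories}) and $\SF,\SG$ are equivalences, the composite $\sus'$ is an equivalence, i.e.\ an autoequivalence of $\CC'$. This is precisely the point at which it is essential that we only require $\sus$ — and hence $\sus'$ — to be an autoequivalence rather than an automorphism; compare the discussion of Keller's observation in the introduction.

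Next I would define $\Theta$ to be the whiskering $\SF\sus\Psi^{-1}$ of the natural isomorphism $\Psi^{-1}\colon\idfunc{\CC}\Rightarrow\SG\SF$ by the functor $\SF\sus$, so that $\Theta_{X}=\SF\sus(\Psi_{X}^{-1})\colon\SF\sus X\to\SF\sus\SG\SF X=\sus'\SF X$ for each $X\in\CC$. As a whiskering of a natural isomorphism, $\Theta\colon\SF\sus\overset{\iso}{\Longrightarrow}\sus'\SF$ is again a natural isomorphism. With $(\CC',\sus')$, $\SF$ and $\Theta$ in hand, I would apply Lemma~\ref{lem:additive-category-with-autoequivalence-equivalent-to-n-angulated-category-is-also-n-angulated} and take $\CT'$ to be its output class of $(n+2)$-$\sus'$-sequences (those isomorphic to one of the form \eqref{eqn:defining-n-angle-in-CC-prime}); the lemma then delivers both conclusions simultaneously.

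There is really no obstacle in this argument: all the combinatorial work — closure of $\CT'$ under finite direct sums and summands, the rotation axiom \ref{F2}, the completion axiom \ref{F3}, and the cone axiom \ref{F4} — has already been carried out in the lemma. The only things that need to be checked here are that $\sus'=\SF\sus\SG$ is genuinely an autoequivalence and that $\Theta$ is a well-defined natural isomorphism, and both are immediate from Setup~\ref{con:equivalence-unit-counit-setup}. If anything, the delicate point is a negative one: the na\"ive attempt to arrange that $\sus'$ be an automorphism of $\CC'$ fails, which is exactly why the statement is phrased for \emph{weak} (pre-)$(n+2)$-angulated categories.
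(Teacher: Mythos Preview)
Your proof is correct and follows essentially the same approach as the paper: set $\sus'\deff\SF\sus\SG$, observe it is an autoequivalence (the paper makes this explicit by exhibiting $\sus'_{-}\deff\SF\sus_{-}\SG$ as a quasi-inverse, while you invoke the general fact that compositions of equivalences are equivalences), take $\Theta\deff\SF\sus\Psi^{-1}$, and appeal to Lemma~\ref{lem:additive-category-with-autoequivalence-equivalent-to-n-angulated-category-is-also-n-angulated}.
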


\begin{proof}

Let us fix the notation as in Setup \ref{con:equivalence-unit-counit-setup}. 
By Theorem \ref{thm:transport-of-additive-structure}, we have that $\CC'$ is an additive category and $\SF$ is an additive equivalence of categories. 

Since $\sus$ is an autoequivalence of $\CC$, there exists a functor $\sus_{-}\colon \CC\to \CC$ and natural isomorphisms $\Gamma\colon\sus \sus_{-} \overset{\iso}{\Longrightarrow}\idfunc{\CC}$ and $\Lambda\colon\sus_{-} \sus \overset{\iso}{\Longrightarrow}\idfunc{\CC}$.  
Set $\sus'\deff\SF\sus\SG$ and $\sus'_{-}\deff\SF\sus_{-}\SG$. 
Then there is a natural isomorphism $\sus'\sus'_{-}\Rightarrow \idfunc{\CC'}$ given by the composition 
\[
\begin{tikzcd}[column sep=1.5cm]
\sus'\sus'_{-} 
= \SF\sus(\SG\SF)\sus_{-}\SG \arrow[Rightarrow]{r}{\SF\sus\Psi_{\sus_{-}\SG}} & 
\SF\sus\idfunc{\CC}\sus_{-}\SG 
= \SF\sus\sus_{-}\SG\arrow[Rightarrow]{r}{\SF\Gamma_{\SG}} & 
\SF\SG \arrow[Rightarrow]{r}{\Phi^{-1}} &  
\idfunc{\CC'}.
\end{tikzcd}
\] 
Similarly, there is also a natural isomorphism $\sus'_{-}\sus'\overset{\iso}{\Longrightarrow}\idfunc{\CC'}$, and hence $\sus'$ is an autoequivalence of $\CC'$.

Note that there is also a natural isomorphism
\[
\Theta\deff \SF\sus\Psi^{-1}\colon\SF\sus 
= \SF\sus\idfunc{\CC}\overset{\iso}{\Longrightarrow}\SF\sus\SG\SF 
= \sus'\SF.
\]
Then, by Lemma \ref{lem:additive-category-with-autoequivalence-equivalent-to-n-angulated-category-is-also-n-angulated}, $(\CC', \sus', \CT')$ is a weak (pre-)$(n+2)$-angulated category, where $\CT'$ is the collection of $(n+2)$-$\sus'$-sequences that are isomorphic to any $(n+2)$-$\sus'$-sequence of the form 
\[
\begin{tikzcd}[column sep=1.9cm]\SF X^{0}\arrow{r}{\SF d_{X}^{0}}&\SF X^{1}\arrow{r}{\SF d_{X}^{1}}&\cdots\arrow{r}{\SF d_{X}^{n}}&\SF X^{n+1}\arrow{r}{\Theta_{X^{0}}\circ\SF d_{X}^{n+1}}&\sus'\SF  X^{0},\end{tikzcd}
\]
such that $\begin{tikzcd}[column sep=0.7cm]X^{0}\arrow{r}{d_{X}^{0}}&X^{1}\arrow{r}{d_{X}^{1}}&\cdots\arrow{r}{d_{X}^{n}}&X^{n+1}\arrow{r}{d_{X}^{n+1}}&\sus X^{0}\end{tikzcd}$ is an $(n+2)$-angle in $\CC$. Furthermore, $\SF$ is a $(n+2)$-angle equivalence by the same result.

\end{proof}


\subsection{\texorpdfstring{$n$}{n}-exact and \texorpdfstring{$n$}{n}-abelian categories}
\label{sec:transport-n-exact-n-abelian-categories}

In \S\ref{sec:transport-n-exact-n-abelian-categories}, we show that whenever a category is equivalent to an $n$-exact (respectively, $n$-abelian) category, it must also be $n$-exact (respectively, $n$-abelian). The idea in both results is that the property of a complex becoming exact under a $\Hom$-functor is preserved by any fully faithful functor.

\begin{thm}
\label{thm:transport-of-n-exact-structure}

Let $(\CC, \CX)$ be an $n$-exact category and  suppose $\SF\colon \CC \to \CC'$ is an equivalence. Consider the class $\CX'$ consisting of all sequences in $\CC'$ that are weakly isomorphic to any sequence of the form
\begin{equation}\label{eqn:defining-n-exact-sequences-in-CC-prime}
\begin{tikzcd}\SF X^{0}\arrow{r}{\SF d_{X}^{0}}&\SF X^{1}\arrow{r}{\SF d_{X}^{1}}&\cdots\arrow{r}{\SF d_{X}^{n}}&\SF X^{n+1},\end{tikzcd}\end{equation}
where $\begin{tikzcd}
X^{0}\arrow[tail]{r}{d_{X}^{0}}&X^{1}\arrow{r}{d_{X}^{1}}&\cdots\arrow[two heads]{r}{d_{X}^{n}}&X^{n+1}\end{tikzcd}$ is an $\CX$-admissible $n$-exact sequence. 
Then $(\CC',\CX')$ forms an $n$-exact category and $\SF$ is an $n$-exact equivalence.

\end{thm}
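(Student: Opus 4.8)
The plan is to follow the same strategy as in Theorem~\ref{thm:transport-of-n-angulated-structure}, transporting data back and forth along $\SF$ and a quasi-inverse. Fix notation as in Setup~\ref{con:equivalence-unit-counit-setup}: a quasi-inverse $\SG\colon\CC'\to\CC$, the unit $\Phi\colon\idfunc{\CC'}\overset{\iso}{\Longrightarrow}\SF\SG$, and the counit $\Psi\colon\SG\SF\overset{\iso}{\Longrightarrow}\idfunc{\CC}$. By Theorem~\ref{thm:transport-of-additive-structure} the category $\CC'$ is additive and both $\SF$ and $\SG$ are additive equivalences. The recurring technical input --- as indicated just before the statement --- is that an additive, fully faithful, essentially surjective functor preserves all the homological data in sight: applying $\SF$ (or $\SG$) degreewise sends $n$-kernels to $n$-kernels and $n$-cokernels to $n$-cokernels, because each defining condition says that $\CC(Z,-)$ (resp.\ $\CC(-,Z)$) turns the sequence into an exact sequence in $\Ab$, and essential surjectivity lets one test this on every $Z'\in\CC'$ by choosing $Z$ with $\SF Z\iso Z'$, whereupon full faithfulness identifies $\CC'(\SF Z,\SF X^{i})\iso\CC(Z,X^{i})$ compatibly with the induced maps. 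In particular $\SF_{\com}$ and $\SG_{\com}$ preserve $n$-exact sequences; and, commuting with finite biproducts, they also preserve mapping cones and hence $n$-pullbacks and $n$-pushouts.

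Next I would check that $\CX'$ is a well-defined class of $n$-exact sequences that is closed under weak isomorphisms. Each $\SF_{\com}X^{\raisebox{0.5pt}{\scalebox{0.6}{$\bullet$}}}$ with $X^{\raisebox{0.5pt}{\scalebox{0.6}{$\bullet$}}}\in\CX$ is $n$-exact by the above, and a weak isomorphism relates $n$-exact sequences, so members of $\CX'$ are indeed $n$-exact. Closure under weak isomorphism uses that, by \cite{Jasso-n-abelian-and-n-exact-categories}, a weak isomorphism of $n$-exact sequences is a homotopy equivalence, so that ``weakly isomorphic'' is an equivalence relation on the class of $n$-exact sequences; hence the sequences weakly isomorphic to some $\SF_{\com}X^{\raisebox{0.5pt}{\scalebox{0.6}{$\bullet$}}}$ ($X^{\raisebox{0.5pt}{\scalebox{0.6}{$\bullet$}}}\in\CX$) form a class closed under that relation. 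Combining this with the preservation statement for $\SG$ and the natural isomorphisms $\Phi,\Psi$ yields the dictionary: for an $n$-exact sequence $Y^{\raisebox{0.5pt}{\scalebox{0.6}{$\bullet$}}}$ in $\CC'$ one has $Y^{\raisebox{0.5pt}{\scalebox{0.6}{$\bullet$}}}\in\CX'$ if and only if $\SG_{\com}Y^{\raisebox{0.5pt}{\scalebox{0.6}{$\bullet$}}}\in\CX$ --- indeed $\SG_{\com}Y^{\raisebox{0.5pt}{\scalebox{0.6}{$\bullet$}}}$ is weakly isomorphic to $\SG_{\com}\SF_{\com}X^{\raisebox{0.5pt}{\scalebox{0.6}{$\bullet$}}}\iso X^{\raisebox{0.5pt}{\scalebox{0.6}{$\bullet$}}}$, while conversely $\SF_{\com}\SG_{\com}Y^{\raisebox{0.5pt}{\scalebox{0.6}{$\bullet$}}}\iso Y^{\raisebox{0.5pt}{\scalebox{0.6}{$\bullet$}}}$. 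From this, using $\Phi$ to rigidify endpoints (given $\SG f = d_{X}^{0}$ with $X^{\raisebox{0.5pt}{\scalebox{0.6}{$\bullet$}}}\in\CX$, conjugating $\SF_{\com}X^{\raisebox{0.5pt}{\scalebox{0.6}{$\bullet$}}}$ at its first two terms by the relevant components of $\Phi$ produces an isomorphic, hence weakly isomorphic, $n$-exact sequence whose first differential is $f$), one deduces that $f$ is an $\CX'$-admissible monomorphism in $\CC'$ precisely when $\SG f$ is an $\CX$-admissible monomorphism in $\CC$, and dually for $\CX'$-admissible epimorphisms.

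With these dictionaries the axioms of Definition~\ref{def:n-exact-category} for $(\CC',\CX')$ reduce to those for $(\CC,\CX)$. Axiom~\ref{nE0} holds because $\SF 0\iso 0$, so the zero sequence in $\CC'$ is weakly isomorphic to $\SF_{\com}$ of the zero sequence in $\CC$. Axioms~\ref{nE1} and~\ref{nE1op} are immediate from the admissible monomorphism/epimorphism dictionaries together with closure under composition in $\CC$. For~\ref{nE2} (and dually~\ref{nE2op}), given an $\CX'$-admissible $n$-exact sequence $X'^{\raisebox{0.5pt}{\scalebox{0.6}{$\bullet$}}}$ and a morphism $f'\colon X'^{0}\to Y'^{0}$, apply $\SG_{\com}$ to obtain the $\CX$-admissible $n$-exact sequence $\SG_{\com}X'^{\raisebox{0.5pt}{\scalebox{0.6}{$\bullet$}}}$ and the morphism $\SG f'$; use~\ref{nE2} in $\CC$ to form an $n$-pushout along $\SG f'$ whose first differential is an $\CX$-admissible monomorphism; apply $\SF_{\com}$, which preserves $n$-pushouts; and finally conjugate by the components of $\Phi$, using naturality to convert $\SF\SG f'$ into $f'$, to obtain an $n$-pushout of $X'^{\raisebox{0.5pt}{\scalebox{0.6}{$\bullet$}}}$ along $f'$ whose first differential is isomorphic to $\SF$ of an $\CX$-admissible monomorphism and hence (by closure of $\CX'$ under weak isomorphism) is an $\CX'$-admissible monomorphism.

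It then remains only to note that $\SF$ is an $n$-exact functor in the sense of Definition~\ref{def:n-exact-functor}, since $\SF_{\com}(\CX)\subseteq\CX'$ by construction, and that $\SF$ is an equivalence, so it is an $n$-exact equivalence. I expect the only genuinely delicate point to be the handling of the ``weakly isomorphic'' relation: both the closure of $\CX'$ under weak isomorphisms and the dictionary $Y^{\raisebox{0.5pt}{\scalebox{0.6}{$\bullet$}}}\in\CX'\iff\SG_{\com}Y^{\raisebox{0.5pt}{\scalebox{0.6}{$\bullet$}}}\in\CX$ hinge on this relation being an equivalence relation on the class of $n$-exact sequences. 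Once that and the preservation statement of the first paragraph are in hand, the verification of the axioms is routine transport along $\SF$ and $\SG$.
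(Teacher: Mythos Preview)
Your argument is correct and follows essentially the same strategy as the paper. The organizational difference is that you set up explicit dictionaries via the quasi-inverse $\SG$ (namely $Y^{\raisebox{0.5pt}{\scalebox{0.6}{$\bullet$}}}\in\CX'\iff\SG_{\com}Y^{\raisebox{0.5pt}{\scalebox{0.6}{$\bullet$}}}\in\CX$, and $f$ is an $\CX'$-admissible monomorphism iff $\SG f$ is an $\CX$-admissible monomorphism) and then read off each axiom, whereas the paper never introduces $\SG$: it reduces each axiom to the case of ``defining'' sequences $\SF_{\com}X^{\raisebox{0.5pt}{\scalebox{0.6}{$\bullet$}}}$ using only essential surjectivity and full faithfulness of $\SF$, and verifies the pushout preservation by an explicit $\Hom$-diagram chase rather than stating the general preservation principle you invoke in your first paragraph. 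Your packaging is cleaner for \ref{nE1}; the paper's is slightly more concrete for \ref{nE2}. The subtlety you flag at the end---that ``weakly isomorphic'' is an equivalence relation on $n$-exact sequences, needed for closure of $\CX'$---is genuine and used by both proofs; the paper simply writes ``by definition'' and leaves the reader to supply the relevant result from \cite{Jasso-n-abelian-and-n-exact-categories}.
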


\begin{proof}

First, note that $\CX'$ does indeed consist of $n$-exact sequences in $\CC'$, because $\SF$ is an equivalence of categories and so gives bijections on morphism sets. Second, by definition $\CX'$ is closed under weak isomorphisms. Now let us show the axioms for an $n$-exact category are satisfied by the pair $(\CC',\CX')$.

Since $0\rightarrowtail 0 \to \cdots \to 0\onto 0$ is an $\CX$-admissible $n$-exact sequence and $\SF$ is an additive functor, the sequence $0\to 0\to \cdots \to 0\to 0$ is in $\CX'$, so \ref{nE0} holds for $(\CC',\CX')$.

With the use of suitable isomorphisms of $n$-exact sequences, we are reduced to showing the remaining axioms for the \emph{defining} sequences of the form \eqref{eqn:defining-n-exact-sequences-in-CC-prime}. 
Thus, suppose we have two composable $\CX'$-admissible monomorphisms  $\SF d_{X}^{0}\colon \SF X^{0}\to \SF X^{1}$ and $\SF d_{Y}^{0}\colon \SF X^{1}\to \SF Y^{1}$, where $d_{Y}^{0}$ and $d_{X}^{0}$ are $\CX$-admissible monomorphisms. 
Then the morphism $d_{Y}^{0} d_{X}^{0}$ is an $\CX$-admissible monomorphism as $(\CC,\CX)$ is an $n$-exact category. 
Hence,  $\SF d_{Y}^{0}\circ \SF d_{X}^{0}=\SF(d_{Y}^{0}d_{X}^{0})$ is an $\CX'$-admissible monomorphism and $(\CC',\CX')$ satisfies \ref{nE1}. 
Dually, $(n$-E$1^{\op})$ is also satisfied.

Suppose we have a defining $\CX'$-admissible $n$-exact sequence of the form \eqref{eqn:defining-n-exact-sequences-in-CC-prime} 
and an arbitrary morphism $f^{0}\colon \SF X^{0}\to Y'$ in $\CC'$. 
Since $\SF$ is essentially surjective, there exists $Y^{0}$ in $\CC$ and an isomorphism $h\colon Y'\to \SF Y^{0}$. 
Then there exists a morphism $g^{0}\colon X^{0}\to Y^{0}$ such that $\SF g^{0}=hf^{0}$ as $\SF$ is full. 
Applying \ref{nE2} in $\CC$, we obtain an $n$-pushout
\begin{equation}\label{eqn:n-pushout-of-X-along-g}
\begin{tikzcd}
X^{0}\arrow[tail]{r}{d^{0}_{X}}\arrow{d}{g^{0}}& X^{1} \arrow{r}{d^{1}_{X}}\arrow[dotted]{d}{g^{1}}& \cdots\arrow{r}{d^{n-1}_{X}}&X^{n}\arrow[dotted]{d}{g^{n}}\\
Y^{0}\arrow[dotted]{r}[swap]{d^{0}_{Y}}& Y^{1} \arrow[dotted]{r}[swap]{d^{1}_{Y}}& \cdots\arrow[dotted]{r}[swap]{d^{n-1}_{Y}}&Y^{n},
\end{tikzcd}\end{equation}
such that $d^{0}_{Y}$ is an $\CX$-admissible monomorphism.

Applying $\SF$ yields a morphism
\begin{equation}\label{eqn:n-pushout-of-FX-along-Fg}
\begin{tikzcd}[column sep=1.3cm]
\SF X^{0}\arrow{r}{\SF d^{0}_{X}}\arrow{d}[swap]{\SF g^{0}=hf^{0}}& \SF X^{1} \arrow{r}{\SF d^{1}_{X}}\arrow{d}{\SF g^{1}}& \cdots\arrow{r}{\SF d^{n-1}_{X}}&\SF X^{n}\arrow{d}{\SF g^{n}}\\
\SF Y^{0}\arrow{r}[swap]{\SF d^{0}_{Y}}& \SF Y^{1} \arrow{r}[swap]{\SF d^{1}_{Y}}& \cdots\arrow{r}[swap]{\SF d^{n-1}_{Y}}&\SF Y^{n}
\end{tikzcd}\end{equation}
of $\CX'$-admissible $n$-exact sequences by definition of $\CX'$. We claim that this diagram is an $n$-pushout in $\CC'$. 
As \eqref{eqn:n-pushout-of-X-along-g} is an $n$-pushout, we know that in the mapping cone $C^{\raisebox{0.5pt}{\scalebox{0.6}{$\bullet$}}}\deff\cone(g)^{\raisebox{0.5pt}{\scalebox{0.6}{$\bullet$}}}$, the sequence $(d_{C}^{0},\ldots, d_{C}^{n-1})$ is an $n$-cokernel for $d_{C}^{-1}$. 
Thus, for any $Z\in \CC$, the induced commutative diagram 
\[
\begin{tikzcd}[column sep=1.4cm, row sep=1cm]
0\arrow{r}\arrow[equals]{d}&
\CC(Y^{n},Z)\arrow{r}{-\circ d_{C}^{n-1}}\arrow{d}{\SF_{Y^{n},Z}}[swap]{\iso}&
\cdots
\arrow{r}{-\circ d_{C}^{0}}&
\CC(X^{1}\oplus Y^{0},Z)\arrow{r}{-\circ d_{C}^{-1}} \arrow{d}{\SF_{X^{1}\oplus Y^{0},Z}}[swap]{\iso}&
\CC(X^{0},Z)\arrow{d}{\SF_{X^{0},Z}}[swap]{\iso}\\
0\arrow{r}&
\CC'(\SF Y^{n},\SF Z)\arrow{r}{-\circ \SF d_{C}^{n-1}}&
\cdots
\arrow{r}{-\circ \SF d_{C}^{0}}&
\CC'(\SF (X^{1}\oplus Y^{0}),\SF Z)\arrow{r}{-\circ \SF d_{C}^{-1}} &
\CC'(\SF X^{0},\SF Z)
\end{tikzcd}
\]
in $\Ab$ has an exact top row. 
The vertical homomorphisms are isomorphisms because $\SF$ is fully faithful, which thus implies exactness of the bottom row. As $\SF$ is essentially surjective, it follows that the sequence $(d_{D}^{0},\ldots, d_{D}^{n-1})$ is an $n$-cokernel for $d_{D}^{-1}$ in the mapping cone $D^{\raisebox{0.5pt}{\scalebox{0.6}{$\bullet$}}}\deff\cone(\SF_{\com} g)^{\raisebox{0.5pt}{\scalebox{0.6}{$\bullet$}}}$. Hence, \eqref{eqn:n-pushout-of-FX-along-Fg} is an $n$-pushout, as claimed.

Since $h\colon Y'\to \SF Y^{0}$ is an isomorphism, we can adjust \eqref{eqn:n-pushout-of-FX-along-Fg} to get another $n$-pushout
\[
\begin{tikzcd}[column sep=1.3cm]
\SF X^{0}\arrow{r}{\SF d^{0}_{X}}\arrow{d}{f^{0}}& \SF X^{1} \arrow{r}{\SF d^{1}_{X}}\arrow[dotted]{d}{\SF g^{1}}& \cdots\arrow{r}{\SF d^{n-1}_{X}}&\SF X^{n}\arrow[dotted]{d}{\SF g^{n}}\\
Y'\arrow[dotted]{r}[swap]{(\SF d^{0}_{Y})h}& \SF Y^{1} \arrow[dotted]{r}[swap]{\SF d^{1}_{Y}}& \cdots\arrow[dotted]{r}[swap]{\SF d^{n-1}_{Y}}&\SF Y^{n},
\end{tikzcd}
\]
where the bottom row is isomorphic to $\SF_{\com} Y^{\raisebox{0.5pt}{\scalebox{0.6}{$\bullet$}}}$. 
In particular, $(\SF d^{0}_{Y})h$ is an $\CX'$-admissible monomorphism. 
Thus, \ref{nE2} holds in $(\CC',\CX')$. Dually, $(n$-E$2^{\op})$ is satisfied.

Hence, $(\CC',\CX')$ is an $n$-exact category. The final claim follows from the definition of $\CX'$.

\end{proof}

Using similar methods as in the proof of Theorem \ref{thm:transport-of-n-exact-structure}, one may also show the following result, which we state without proof.

\begin{thm}
\label{thm:transport-of-n-abelian-structure}

Let $\CC$ be an $n$-abelian category and suppose $\SF\colon \CC \to \CC'$ is an equivalence. Then $\CC'$ is an $n$-abelian category and $\SF$ is an $n$-exact equivalence.

\end{thm}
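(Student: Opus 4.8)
The plan is to mimic the proof of Theorem \ref{thm:transport-of-n-exact-structure}, using the fact that $n$-abelian categories are characterised purely in terms of (co)kernels and exactness of $\Hom$-sequences, all of which are preserved and reflected by a fully faithful, essentially surjective functor. By Theorem \ref{thm:transport-of-additive-structure}, $\CC'$ is additive and $\SF$ is an additive equivalence; fix a quasi-inverse $\SG$ and the unit/counit as in Setup \ref{con:equivalence-unit-counit-setup}. The key elementary observation, to be recorded first, is: if $\SF\colon\CC\to\CC'$ is fully faithful and $(d_{X}^{0},\dots,d_{X}^{n-1})$ is an $n$-kernel of $d_X^n$ in $\CC$, then $(\SF d_X^0,\dots,\SF d_X^{n-1})$ is an $n$-kernel of $\SF d_X^n$ in the essential image of $\SF$; and, conversely, using essential surjectivity, an $n$-kernel in $\CC'$ of a morphism between objects of $\CC$ can be transported back. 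Concretely, applying $\CC'(Z',-)$ to the image sequence, writing $Z'\iso\SF Z$, and using the natural isomorphisms $\CC'(\SF Z,\SF X^i)\iso\CC(Z,X^i)$ turns the putative exact sequence in $\Ab$ into the one coming from $\CC(Z,-)$ applied to the original $n$-kernel, which is exact by hypothesis. The dual statement for $n$-cokernels is identical.

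With that lemma in hand I would verify the three axioms \ref{nA0}--\ref{nA2op} for $\CC'$. For \ref{nA0}: idempotents split in $\CC'$ because they split in $\CC$ and $\SF$ is an additive equivalence (a standard fact, or argue directly: given an idempotent $e'$ on $X'$, transport it via $\SG$ to an idempotent on $\SG X'$, split it, apply $\SF$, and correct by the counit isomorphism). For \ref{nA1}: given any morphism $f'\colon X'\to Y'$ in $\CC'$, choose isomorphisms $X'\iso\SF X$, $Y'\iso\SF Y$ and (by fullness) a morphism $f\colon X\to Y$ with $\SF f$ matching $f'$ up to these isomorphisms; take an $n$-kernel and $n$-cokernel of $f$ in $\CC$ (available by \ref{nA1} for $\CC$), apply $\SF$, and use the lemma above together with the isomorphisms to conclude these are an $n$-kernel and $n$-cokernel of $f'$. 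For \ref{nA2}: given a monomorphism $d^0\colon X'^0\into X'^1$ in $\CC'$ and an $n$-cokernel $(d^1,\dots,d^n)$ of it, transport the whole configuration back to $\CC$ along $\SG$ — note a monomorphism is sent to a monomorphism by the fully faithful $\SG$, and an $n$-cokernel to an $n$-cokernel by the lemma — apply \ref{nA2} in $\CC$ to get an $n$-exact sequence, then apply $\SF$ and use the lemma once more (exactness of the relevant $\Hom$-sequences is preserved) to see the original sequence in $\CC'$ is $n$-exact. The dual axiom \ref{nA2op} is handled symmetrically.

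Finally, to see that $\SF$ is an $n$-exact equivalence one invokes \cite[Thm.\ 4.4]{Jasso-n-abelian-and-n-exact-categories}, which equips an $n$-abelian category with a canonical $n$-exact structure $\CX$ (respectively $\CX'$) whose admissible $n$-exact sequences are exactly the $n$-exact sequences; then $\SF_{\com}$ evidently sends $n$-exact sequences to $n$-exact sequences (by the lemma), so $\SF_{\com}(\CX)\subseteq\CX'$ and $\SF$ is $n$-exact by Definition \ref{def:n-exact-functor}, hence an $n$-exact equivalence. Alternatively one can simply note that $\CX'$ (in the sense of Theorem \ref{thm:transport-of-n-exact-structure} applied to this canonical structure) coincides with the class of all $n$-exact sequences in $\CC'$, which is automatic here.

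The routine-but-slightly-fiddly part will be the bookkeeping with the isomorphisms $X'\iso\SF X$ when pulling morphisms and sequences back and forth, and checking the $n$-abelian axioms are invariant under the weak isomorphisms / isomorphisms of complexes that these corrections introduce. The genuinely substantive point — and the only place where any real content lies — is the lemma that a fully faithful functor preserves $n$-kernels and $n$-cokernels into its essential image, and that essential surjectivity lets one reflect them; once that is isolated, every axiom reduces to transporting a configuration to $\CC$, applying the corresponding $n$-abelian axiom there, and transporting back. I do not expect any serious obstacle beyond this.
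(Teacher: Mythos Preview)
Your proposal is correct and is precisely the argument the paper has in mind: the paper states this theorem without proof, remarking only that it follows ``using similar methods as in the proof of Theorem~\ref{thm:transport-of-n-exact-structure}'', and your write-up does exactly that---isolating the preservation and reflection of $n$-kernels/$n$-cokernels under a fully faithful, essentially surjective functor and then transporting each axiom across the equivalence. There is nothing to add.
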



\smallskip
\subsection{\texorpdfstring{$n$}{n}-exangulated categories}
\label{sec:transport-n-exangulated-categories}

The goal of \S\ref{sec:transport-n-exangulated-categories} is to show that one can transport an $n$-exangulated structure across an equivalence.

\begin{rem}
\label{rem:E-notation-changes}

We now supplement the discussion after Definition \ref{def:n-exangles}. Suppose $\SF\colon \CC \to \CC'$ is a functor and $\BE\colon \CC^{\op}\times\CC\to\Ab$ and $\BE'\colon \CC'^{\op}\times\CC'\to\Ab$ are biadditive functors. In Lemma \ref{lem:transport-of-exact-realisation} and Theorem \ref{thm:transport-of-n-exangulated-structure} below, it is necessary to distinguish  between $\BE$-extensions and $\BE'$-extensions, and describe relationships between them. This necessity motivates the use of the symbol $\BE$ in denoting $\delta_{\sharp}^{\BE}$, $\delta^{\sharp}_{\BE}$, $x_{\BE}\delta$  and $z^{\BE}\delta$. Note that these were denoted $\delta_{\sharp}$, $\delta^{\sharp}$, $x_{*}\delta$  and $z^{*}\delta$, respectively in 
\cite{HerschendLiuNakaoka-n-exangulated-categories-I-definitions-and-fundamental-properties} 
and 
\cite{NakaokaPalu-extriangulated-categories-hovey-twin-cotorsion-pairs-and-model-structures}.

\end{rem}

\begin{lem}
\label{lem:transport-of-exact-realisation}

Let $(\CC,\BE,\fs)$ be an $n$-exangulated category and suppose $\SF\colon \CC \to \CC'$ is an equivalence. 
Fix the notation as in \emph{Setup \ref{con:equivalence-unit-counit-setup}}. 
Define the functor $\BE'\colon \CC'^{\op}\times\CC'\to\Ab$ to be $\BE(\SG-,\SG-)$. 
For each $A,C\in\CC'$ and each $\delta\in\BE'(C,A)=\BE(\SG C,\SG A)$, set $\fs'(\delta)$ to be the homotopy equivalence class 
\[
[
	\begin{tikzcd}[column sep=1.9cm]
	A \arrow{r}{(\SF d_{X}^{0})\Phi_{A}} 
		& \SF X^{1} \arrow{r}{\SF d_{X}^{1}}
		& \cdots \arrow{r}{\SF d_{X}^{n-1}}
		& \SF X^{n} \arrow{r}{\tensor[]{\Phi}{_{C}^{-1}}\circ\SF d_{X}^{n}}
		&C
	\end{tikzcd}
]
\]
in $\kom_{(A,C)}^{n}$, 
where 
$\fs(\delta)=
[
\begin{tikzcd}
\SG A \arrow{r}{d_{X}^{0}}& X^{1} \arrow{r}{d_{X}^{1}}& \cdots \arrow{r}{d_{X}^{n-1}}& X^{n} \arrow{r}{d_{X}^{n}}&\SG C
\end{tikzcd}
]$. 
Then $\BE'$ is a biadditive functor and $\fs'$ is an exact realisation of $\BE'$.

\end{lem}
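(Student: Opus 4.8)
The plan is to verify the three assertions in turn: that $\BE'$ is biadditive, that $\fs'$ is a well-defined realisation (conditions \ref{R0} together with well-definedness on homotopy equivalence classes), and that $\fs'$ is exact (conditions \ref{R1} and \ref{R2}).

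For biadditivity of $\BE' = \BE(\SG-,\SG-)$, I would observe that this is a composite of functors: $\SG^{\op}\times\SG\colon \CC'^{\op}\times\CC'\to\CC^{\op}\times\CC$ followed by $\BE$. Since $\SG$ is additive by Theorem \ref{thm:transport-of-additive-structure}, so is $\SG^{\op}$, hence $\SG^{\op}\times\SG$ is additive in each variable; composing with the biadditive $\BE$ gives a biadditive functor. This is routine.

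The substantive part is checking $\fs'$ is an exact realisation. First I would verify $\fs'$ is well-defined: if $\fs(\delta)=[X^{\raisebox{0.5pt}{\scalebox{0.6}{$\bullet$}}}]=[Y^{\raisebox{0.5pt}{\scalebox{0.6}{$\bullet$}}}]$ in $\kom^n_{(\SG A,\SG C)}$, then a homotopy equivalence $X^{\raisebox{0.5pt}{\scalebox{0.6}{$\bullet$}}}\to Y^{\raisebox{0.5pt}{\scalebox{0.6}{$\bullet$}}}$ pushes forward (applying $\SF_{\com}$ and conjugating the outer terms by $\Phi_A$ and $\Phi_C^{-1}$) to a homotopy equivalence between the corresponding complexes in $\kom^n_{(A,C)}$; here one uses that $\SF_{\com}$ preserves homotopies and that pre/post-composition with the fixed isomorphisms $\Phi_A,\Phi_C^{-1}$ and identity elsewhere is a chain isomorphism. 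For \ref{R0}: given a morphism of $\BE'$-extensions $(a,c)\colon\delta\to\eps$, applying $\SG$ yields a morphism $(\SG a,\SG c)\colon \SG_{\BE}$-extension $\to$ of the underlying $\BE$-extensions; here I must check the compatibility $\SG(a)_{\BE}\delta = \SG(c)^{\BE}\eps$ follows from $a_{\BE'}\delta = c^{\BE'}\eps$ together with the definitions of $\BE'$ on morphisms — this is where functoriality of $\BE$ under $\SG$ is used. Then \ref{R0} for $\fs$ gives a chain map $g^{\raisebox{0.5pt}{\scalebox{0.6}{$\bullet$}}}$ realising $(\SG a,\SG c)$; applying $\SF_{\com}$ and adjusting the outer terms via the naturality squares of $\Phi$ for $a$ and $c$ produces the required $f^{\raisebox{0.5pt}{\scalebox{0.6}{$\bullet$}}}$ with $f^0=a$, $f^{n+1}=c$.

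For exactness, \ref{R2} is a direct computation: one must identify $\fs'({}_A0_C)$ and check it equals the class of the trivial complexes, which amounts to tracking how the zero element of $\BE(\SG A,\SG 0)\cong\BE(\SG A, 0)$ (using $\SG 0\cong 0$) and its realisation — the trivial $n$-exangle from \ref{R2} for $\fs$ — transports under $\SF_{\com}$ and the $\Phi$-adjustments, again using $\SF 0\cong 0$ and $\Phi_0$. The main obstacle will be \ref{R1}: showing that $\langle X'^{\raisebox{0.5pt}{\scalebox{0.6}{$\bullet$}}},\delta\rangle$ is an $n$-exangle whenever $\fs'(\delta)=[X'^{\raisebox{0.5pt}{\scalebox{0.6}{$\bullet$}}}]$, i.e. that the two $\Hom$-sequences in Definition \ref{def:n-exangles} are exact. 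The strategy is the same as in the proof of Theorem \ref{thm:transport-of-n-exact-structure}: for any $Z'\in\CC'$, write $Z'\cong\SF Z$ for some $Z\in\CC$ (essential surjectivity), and compare the sequence $\CC'(Z',X'^{i})$ with $\CC(Z,X^{i})$ via the isomorphisms $\SF_{A,B}\colon\CC(A,B)\to\CC'(\SF A,\SF B)$ (full faithfulness), checking that the connecting map $(\delta_{\sharp}^{\BE'})_{Z'}\colon\CC'(Z',A)\to\BE'(Z',X'^0)=\BE(\SG Z',\SG A)$ corresponds under these identifications to $(\delta_{\sharp}^{\BE})_{Z}$ — this compatibility of the connecting maps, involving chasing the definition of $\delta_{\sharp}^{\BE}$ through $\SG$, $\SF$, and $\Phi$, is the technically delicate point. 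Once the two sequences for $\fs'(\delta)$ are shown isomorphic (as sequences of abelian groups, up to the fixed $\Phi$-adjustments in the outer terms) to the corresponding exact sequences witnessing that $\langle X^{\raisebox{0.5pt}{\scalebox{0.6}{$\bullet$}}},\delta\rangle$ is an $n$-exangle (which holds by \ref{R1} for $\fs$), exactness transfers and we are done.
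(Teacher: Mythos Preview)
Your proposal is correct and follows essentially the same route as the paper's proof: biadditivity of $\BE'$ is dispatched by composition, well-definedness and \ref{R0} are handled by applying $\SG$ to turn $\BE'$-morphisms into $\BE$-morphisms and then pushing the resulting chain map back through $\SF_{\com}$ with $\Phi$-adjustments, \ref{R1} is obtained by comparing the $\CC'(Z,-)$-sequence with the known-exact $\CC(\SG Z,-)$-sequence via full faithfulness and the $\Phi$-isomorphisms (the paper, like you, singles out the commutativity at the connecting map $(\delta_\sharp^{\BE'})_Z$ as the point requiring explicit verification), and \ref{R2} is a direct computation using $\SG 0\cong 0$. The only cosmetic difference is that where you invoke essential surjectivity of $\SF$ to write $Z'\cong\SF Z$, the paper simply takes $Z=\SG Z'$, which amounts to the same thing.
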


\begin{proof}

It is straightforward to show that $\BE'\colon \CC'^{\op}\times\CC'\to\Ab$ is a biadditive functor. 
Using that $\SF$ is an equivalence, and in particular that $\Phi$ and $\Psi$ are natural isomorphisms, one can also check that $\fs'$ is a well-defined assignment. 

Let us show that $\fs'$ is a realisation of $\BE'$. 
Let $A,B,C,D\in\CC'$ and suppose we have extensions $\delta\in\BE'(C,A)$ and $\eps\in\BE'(D,B)$. Suppose 
\[
\fs'(\delta)=
[\begin{tikzcd}[column sep=1.6cm]
A \arrow{r}{(\SF d_{X}^{0})\Phi_{A}}& \SF X^{1} \arrow{r}{\SF d_{X}^{1}}& \cdots \arrow{r}{\SF d_{X}^{n-1}}& \SF X^{n} \arrow{r}{\tensor[]{\Phi}{_{C}^{-1}}\SF d_{X}^{n}}&C
\end{tikzcd}]
\] 
and 
\[
\fs'(\eps)=
[\begin{tikzcd}[column sep=1.6cm]
B \arrow{r}{(\SF d_{Y}^{0})\Phi_{B}}& \SF Y^{1} \arrow{r}{\SF d_{Y}^{1}}& \cdots \arrow{r}{\SF d_{Y}^{n-1}}& \SF Y^{n} \arrow{r}{\tensor[]{\Phi}{_{D}^{-1}}\SF d_{Y}^{n}}& D
\end{tikzcd}],
\] 
where 
\[
\fs(\delta)=
[
\begin{tikzcd}
\SG A \arrow{r}{d_{X}^{0}}& X^{1} \arrow{r}{d_{X}^{1}}& \cdots \arrow{r}{d_{X}^{n-1}}& X^{n} \arrow{r}{d_{X}^{n}}&\SG C
\end{tikzcd}
]
\] 
and 
\[\fs(\eps)=
[
\begin{tikzcd}
\SG B \arrow{r}{d_{Y}^{0}}& Y^{1} \arrow{r}{d_{Y}^{1}}& \cdots \arrow{r}{d_{Y}^{n-1}}& Y^{n} \arrow{r}{d_{Y}^{n}}&\SG D
\end{tikzcd}
].
\] 
Let $(a,c)\colon \delta \to \eps$ be a morphism of $\BE'$-extensions, where $a\colon A\to B$ and $c\colon C\to D$ are morphisms in $\CC'$. 
Then we have 
\begin{align*}
(\SG a)_{\BE}(\delta)	
	& = \BE(\SG C,\SG a)(\delta)\\
     	& = \BE'(C,a)(\delta)\\
     	& = a_{\BE'}\delta \\
	& = c^{\BE'}\delta\\
	& = \BE'(c,B)(\eps)\\
	& = \BE(\SG c,\SG B)(\eps)\\
	& = (\SG c)^{\BE}(\eps),
\end{align*}
which implies $(\SG a,\SG c)\colon\delta\to\eps$ is a morphism of $\BE$-extensions. 
Thus, since $\fs$ is a realisation, there is a morphism $f^{\raisebox{0.5pt}{\scalebox{0.6}{$\bullet$}}}=(\SG a,f^{1},\ldots,f^{n},\SG c)$ of complexes in $\com^{n}_{\CC}$ depicted by 
\begin{equation}\label{eqn:realisation-of-morphism-a-c-of-BE-extensions}
\begin{tikzcd}
\SG A \arrow{r}{d_{X}^{0}}\arrow{d}{\SG a} & X^{1} \arrow{r}{d_{X}^{1}} \arrow{d}{f^{1}}& \cdots \arrow{r}{d_{X}^{n-1}}& X^{n} \arrow{r}{d_{X}^{n}} \arrow{d}{f^{n}}&\SG C \arrow{d}{\SG c}\\
\SG B \arrow{r}{d_{Y}^{0}}& Y^{1} \arrow{r}{d_{Y}^{1}}& \cdots \arrow{r}{d_{Y}^{n-1}}& Y^{n} \arrow{r}{d_{Y}^{n}}&\SG D.
\end{tikzcd}
\end{equation}

There are commutative squares 
\begin{equation}\label{eqn:comm-sqs-from-natural-iso-a-c}
\begin{tikzcd}[row sep=0.8cm,column sep=1.3cm]
A \arrow{r}{\Phi_{A}}\arrow{d}{a}& \SF\SG A \arrow{d}{\SF\SG a}\\
B \arrow{r}{\Phi_{B}}& \SF\SG B
\end{tikzcd}
\hspace{1cm}
\text{and}
\hspace{1cm}
\begin{tikzcd}[row sep=0.8cm,column sep=1.3cm]
\SF\SG C \arrow{r}{\tensor[]{\Phi}{_{C}^{-1}}}\arrow{d}{\SF\SG c}& C\arrow{d}{c} \\
\SF\SG D \arrow{r}{\tensor[]{\Phi}{_{D}^{-1}}}& D
\end{tikzcd}
\end{equation} 
from the natural isomorphism $\Phi\colon \idfunc{\CC'}\Rightarrow \SF\SG$. 
Thus, applying $\SF$ to \eqref{eqn:realisation-of-morphism-a-c-of-BE-extensions} and adjusting the resultant diagram with the squares in \eqref{eqn:comm-sqs-from-natural-iso-a-c}, we obtain a commutative diagram 
\begin{equation}
\begin{tikzcd}[column sep=1.8cm, row sep=1.2cm]
A \arrow{r}{(\SF d_{X}^{0})\Phi_{A}}\arrow{d}{a} & \SF X^{1} \arrow{r}{\SF d_{X}^{1}} \arrow{d}{\SF f^{1}}& \cdots \arrow{r}{\SF d_{X}^{n-1}}& \SF X^{n} \arrow{r}{\tensor[]{\Phi}{_{C}^{-1}}\SF d_{X}^{n}} \arrow{d}{\SF f^{n}}&C \arrow{d}{c}\\
B \arrow{r}{(\SF d_{Y}^{0})\Phi_{B}}& \SF Y^{1} \arrow{r}{\SF d_{Y}^{1}}& \cdots \arrow{r}{\SF d_{Y}^{n-1}}& \SF Y^{n} \arrow{r}{\tensor[]{\Phi}{_{D}^{-1}}\SF d_{Y}^{n}}&D
\end{tikzcd}
\end{equation} 
in $\CC'$. 
That is, $(a,\SF f^{1},\ldots,\SF f^{n},c)$ is a morphism in $\com_{\CC'}^{n}$ and hence $\fs'$ is a realisation of $\BE'$.

Now let us show that $\fs'$ satisfies \ref{R1}. Suppose that 
\[
\fs'(\delta)=
[\begin{tikzcd}[column sep=1.7cm]
A \arrow{r}{(\SF d_{X}^{0})\Phi_{A}}& \SF X^{1} \arrow{r}{\SF d_{X}^{1}}& \cdots \arrow{r}{\SF d_{X}^{n-1}}& \SF X^{n} \arrow{r}{\tensor[]{\Phi}{_{C}^{-1}}\SF d_{X}^{n}}&C
\end{tikzcd}],
\] 
where 
$\fs(\delta)=
[
\begin{tikzcd}
\SG A \arrow{r}{d_{X}^{0}}& X^{1} \arrow{r}{d_{X}^{1}}& \cdots \arrow{r}{d_{X}^{n-1}}& X^{n} \arrow{r}{d_{X}^{n}}&\SG C
\end{tikzcd}
].$ 
Let $Z\in\CC'$ be arbitrary, and consider the following diagram 
\[
\begin{tikzcd}[column sep=1.3cm, row sep=1.2cm]
\CC(\SG Z,\SG A) \arrow{r}{d_{X}^{0}\circ-}\arrow{d}{\SF_{\SG Z,\SG A }}&\CC(\SG Z,X^{1}) \arrow{r}{d_{X}^{1}\circ-}\arrow{d}{\SF_{\SG Z, X^{1}}}& \cdots \arrow{r}{d_{X}^{n}\circ-}& \CC(\SG Z,\SG C) \arrow{r}{(\delta_{\sharp}^{\BE})_{\SG Z}}\arrow{d}{\SF_{\SG Z, \SG C}}&\BE(\SG Z,\SG A)\arrow[equals]{dd}\\
\CC'(\SF \SG Z,\SF \SG A) \arrow{r}{\SF d_{X}^{0}\circ-}\arrow{d}{g^{0}}&\CC'(\SF \SG Z,\SF X^{1}) \arrow{r}{\SF d_{X}^{1}\circ-}\arrow{d}{g^{1}}& \cdots \arrow{r}{\SF d_{X}^{n}\circ-}& \CC'(\SF \SG Z,\SF \SG C)\arrow{d}{g^{n+1}}&\\
\CC'(Z,A) \arrow{r}{(\SF d_{X}^{0})\Phi_{A}\circ-}&\CC'(Z,\SF X^{1}) \arrow{r}{\SF d_{X}^{1}\circ-}& \cdots \arrow{r}{\tensor[]{\Phi}{_{C}^{-1}}\SF d_{X}^{n}\circ-}& \CC'(Z,C) \arrow{r}{(\delta_{\sharp}^{\BE'})_{Z}}&\BE'(Z,A),
\end{tikzcd}
\] 
where the upper vertical maps are those induced by the functor $\SF$, and 
$g^{0}=\tensor[]{\Phi}{_{A}^{-1}}\circ - \circ \Phi_{Z}$, 
$g^{i}= - \circ \Phi_{Z}$ for each $i=1,\ldots,n$, 
and 
$g^{n+1}=\tensor[]{\Phi}{_{C}^{-1}}\circ - \circ \Phi_{Z}$. 
It is easy to check that the whole diagram commutes, except possibly the far right rectangle. To this end, suppose $v\colon \SG Z \to \SG C$ is arbitrary. As $\SG$ is full, there exists $u\colon Z\to C$ such that $\SG u=v$. Since $\Phi\colon \idfunc{\CC'}\Rightarrow\SF\SG$ is a natural isomorphism, we have that $u=\tensor[]{\Phi}{_{C}^{-1}}(\SF\SG u)\Phi_{Z}$. 
Then we see that 
\begin{align*}
(\delta_{\sharp}^{\BE'})_{Z}(g^{n+1}(\SF_{\SG Z,\SG C}(v))) 
	&= (\delta_{\sharp}^{\BE'})_{Z}(\tensor[]{\Phi}{_{C}^{-1}}(\SF\SG u)\Phi_{Z})\\
	&= (\delta_{\sharp}^{\BE'})_{Z}(u) \\
	&= \BE'(u,A)(\delta)\\
	&= \BE(\SG u, \SG A)(\delta)\\
	&= \BE(v, \SG A)(\delta)\\
	&= (\delta_{\sharp}^{\BE})_{\SG Z}(v).
\end{align*}
Note that all the vertical maps are bijections, because $\SF$ is fully faithful and $\Phi_{X'}$ is an isomorphism for all $X'\in\CC'$. 
Since $\fs$ is an exact realisation of $\BE$, the top row is exact and hence so is the bottom row.

Similarly, one can show that there is an exact sequence
\[]
\begin{tikzcd}[column sep=2.15cm]
\CC'(C,Z) \arrow{r}{-\circ\tensor[]{\Phi}{_{C}^{-1}}\SF d_{X}^{n}} & \CC'(\SF X^{n},Z) \arrow{r}{-\circ\SF d_{X}^{n-1}}& \cdots \arrow{r}{-\circ(\SF d_{X}^{0})\Phi_{A}}& \CC'(A,Z) \arrow{r}{(\delta^{\sharp}_{\BE'})_{Z}}&\BE'(C,Z),
\end{tikzcd}
\]
and thus $\lan\begin{tikzcd}[column sep=1.7cm]
A \arrow{r}{(\SF d_{X}^{0})\Phi_{A}}& \SF X^{1} \arrow{r}{\SF d_{X}^{1}}& \cdots \arrow{r}{\SF d_{X}^{n-1}}& \SF X^{n} \arrow{r}{\tensor[]{\Phi}{_{C}^{-1}}\SF d_{X}^{n}}&C
\end{tikzcd} , \delta\ran$ is an $n$-exangle.

Lastly, we show that \ref{R2} holds for $\fs'$. Let $A\in\CC'$ be arbitrary. Then, as $\fs$ is an exact realisation of $\BE$, we have 
\[
\fs(_{\SG A}0_{\SG 0})=[
\begin{tikzcd}
\SG A \arrow{r}{1_{\SG A}}& \SG A\arrow{r}{} & 0 \arrow{r}{}& \cdots \arrow{r}{}& \SG 0
\end{tikzcd}].
\] 
Note that the element $_{\SG A}0_{\SG 0}\in\BE(\SG 0, \SG A)$ is the trivial element $_{A}0_{0}$ in $\BE'(0,A)$, so by definition 
$\fs'(_{A}0_{0})=\fs'(_{\SG A}0_{\SG 0})=[
\begin{tikzcd}
A \arrow{r}{\Phi_{A}}& \SF \SG A\arrow{r}{} & 0 \arrow{r}{}& \cdots \arrow{r}{}& 0
\end{tikzcd}].$ 
Lastly, we see that 
\[
\fs'(_{A}0_{0})=[
\begin{tikzcd}
A \arrow{r}{1_{A}}& A\arrow{r}{} & 0 \arrow{r}{}& \cdots \arrow{r}{}& 0
\end{tikzcd}],
\] 
using the isomorphism  $(1_{A},\Phi_{A},0,\ldots,0)$ in $\com_{(A,0)}^{n}$. 
Similarly, one can also show that 
\[
\fs'(_{0}0_{A})=[
\begin{tikzcd}
0 \arrow{r}{}& \cdots \arrow{r}{}& 0 \arrow{r}{}& A \arrow{r}{1_{A}}& A
\end{tikzcd}]
\] 
for the trivial element $_{0}0_{A}\in\BE'(A,0)$. 
Therefore, $\fs'$ is an exact realisation of $\BE'$.

\end{proof}

\begin{thm}
\label{thm:transport-of-n-exangulated-structure}

Let $(\CC,\BE,\fs)$ be an $n$-exangulated category and suppose $\SF\colon \CC \to \CC'$ is an equivalence.  
Then there is a biadditive functor $\BE'\colon\CC'^{\op}\times\CC'\to\Ab$ and an exact realisation $\fs'$ of $\BE'$, such that $(\CC',\BE',\fs')$ is an $n$-exangulated category and $\SF$ is an $n$-exangle equivalence.

\end{thm}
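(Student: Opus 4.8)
The plan is to reduce everything to Lemma \ref{lem:transport-of-exact-realisation}. Fixing the notation of Setup \ref{con:equivalence-unit-counit-setup}, that lemma already hands us a biadditive functor $\BE'\deff\BE(\SG-,\SG-)$ (whose values $\BE'(C,A)=\BE(\SG C,\SG A)$ lie in $\Ab$, so no set-theoretic hypothesis is needed) together with an exact realisation $\fs'$ of $\BE'$. So two things remain: (i) check axioms \ref{nEA1}, \ref{nEA2} and ($n$-EA$2^{\op}$) from Definition \ref{def:n-exangulated-category} for $(\CC',\BE',\fs')$; and (ii) exhibit $\SF$ as an $n$-exangulated functor, after which it is automatically an $n$-exangle equivalence by Definition \ref{def:n-exangulated-functor}.

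The engine of the proof is a dictionary between the two categories. Since $\BE'(C,A)=\BE(\SG C,\SG A)$ on the nose, the operations $x_{\BE'}\delta$ and $z^{\BE'}\delta$ are literally $(\SG x)_{\BE}\delta$ and $(\SG z)^{\BE}\delta$. Since $\SF$ is additive it preserves finite biproducts and hence commutes with the mapping-cone construction of Definition \ref{def:mapping-cone-as-in-Jasso} up to a canonical isomorphism of complexes. And, crucially, by the explicit description of $\fs'$ in Lemma \ref{lem:transport-of-exact-realisation} together with the triangle identities \eqref{eqn:triangle-identities}, the image $\SF_{\com}X^{\bullet}$ of an $\fs$-conflation $X^{\bullet}$ is, after re-attaching its end objects via the relevant components of $\Phi$, a defining $\fs'$-conflation; conversely $\SG_{\com}$ of an $\fs'$-conflation, with its ends re-attached via components of $\Psi$, is an $\fs$-conflation. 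I will also use that in any $n$-exangulated category the class of $\fs$-conflations is stable under isomorphism of complexes, which follows from \ref{R0} since an isomorphism of complexes realises an isomorphism of extensions.

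Granting this, \ref{nEA1} is easy: if $f'\colon A\to M$ and $g'\colon M\to N$ are composable $\fs'$-inflations, then $\SG f'$ and $\SG g'$ are composable $\fs$-inflations in $\CC$, so $\SG(g'f')$ is an $\fs$-inflation by \ref{nEA1} for $\CC$; transporting a witnessing $\fs$-conflation back through Lemma \ref{lem:transport-of-exact-realisation} and adjusting its first differential by a component of $\Phi^{-1}$ (using naturality of $\Phi$) produces an $\fs'$-conflation with first morphism $g'f'$. The deflation statement is dual. For \ref{nEA2}, given $\delta'\in\BE'(D,A)$ and $c'\in\CC'(C,D)$ with $\fs'(c'^{\BE'}\delta')=[U'^{\bullet}]$ and $\fs'(\delta')=[V'^{\bullet}]$, I would set $\delta\deff\delta'\in\BE(\SG D,\SG A)$ and $c\deff\SG c'$, note that $c^{\BE}\delta=c'^{\BE'}\delta'$ as elements, take a good lift $f^{\bullet}=(1_{\SG A},f^{1},\dots,f^{n},c)$ in $\CC$ via \ref{nEA2} for $\CC$, apply $\SF_{\com}$, and conjugate the end terms by components of $\Phi$ exactly as in the proof of Lemma \ref{lem:transport-of-exact-realisation}; using the canonical isomorphism $\SF_{\com}\cone(-)^{\bullet}\iso\cone(\SF_{\com}-)^{\bullet}$ one then checks the mapping-cone condition, and finally transfers the good lift from the defining representatives to $U'^{\bullet},V'^{\bullet}$ using that cones are insensitive to homotopy equivalence. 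Axiom ($n$-EA$2^{\op}$) is dual. For (ii), define $\Gamma_{(C,A)}\deff\BE(\Psi_{C},\Psi_{A}^{-1})\colon\BE(C,A)\to\BE(\SG\SF C,\SG\SF A)=\BE'(\SF C,\SF A)$; naturality of $\Gamma=\{\Gamma_{(C,A)}\}_{(C,A)\in\CC^{\op}\times\CC}$ is immediate from bifunctoriality of $\BE$ and naturality of $\Psi$, and the compatibility $\fs(\delta)=[X^{\bullet}]\Rightarrow\fs'(\Gamma_{(X^{n+1},X^{0})}(\delta))=[\SF_{\com}X^{\bullet}]$ is a direct computation: $\fs(\Gamma_{(X^{n+1},X^{0})}(\delta))$ is represented by $X^{\bullet}$ with its end objects replaced via the isomorphisms $\Psi_{X^{0}}^{-1}$ and $\Psi_{X^{n+1}}$, and plugging this into the formula of Lemma \ref{lem:transport-of-exact-realisation} collapses the two boundary morphisms back to $\SF d_{X}^{0}$ and $\SF d_{X}^{n}$ precisely because $\SF\Psi\circ\Phi_{\SF}=1_{\SF}$.

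I expect the main obstacle to be \ref{nEA2}: verifying that the good-lift condition transports requires reconciling three bookkeeping layers simultaneously — the canonical isomorphism between $\SF_{\com}$ of a mapping cone and the mapping cone of $\SF_{\com}$, the $\Phi$-conjugations of end objects built into $\fs'$, and the pushforward $(d^{0})_{\BE'}\delta'$ — and checking that the boundary contributions all cancel via the triangle identities. The remaining axioms and the functor statement, by contrast, reduce transparently to the corresponding facts in $\CC$.
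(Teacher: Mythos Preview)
Your proposal is correct and follows essentially the same route as the paper: reduce to Lemma \ref{lem:transport-of-exact-realisation}, then transport \ref{nEA1} and \ref{nEA2} through $\SG$ and back via $\SF$, with the bookkeeping handled by $\Phi$, $\Psi$ and the triangle identities. You in fact go slightly further than the paper, whose proof does not explicitly construct the natural transformation $\Gamma$ witnessing that $\SF$ is $n$-exangulated; your choice $\Gamma_{(C,A)}=\BE(\Psi_{C},\Psi_{A}^{-1})$ and its verification via \eqref{eqn:triangle-identities} correctly fill that small gap.
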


\begin{proof}

Fix the notation as in Setup \ref{con:equivalence-unit-counit-setup}. Let $\BE'=\BE(\SG-,\SG-)\colon \CC'^{\op}\times\CC'\to\Ab$ be the functor and $\fs'$ the assignment defined in Lemma \ref{lem:transport-of-exact-realisation}. From Lemma \ref{lem:transport-of-exact-realisation}, we have that $\fs'$ is an exact realisation of the biadditive functor $\BE'$. 
It remains to show that the triple $(\CC',\BE',\fs')$ satisfies axioms \ref{nEA1}, \ref{nEA2} and ($n$-EA$2^{\op}$).

For \ref{nEA1}, suppose we have morphisms $a\colon A\to B$ and $b\colon B\to C$ in $\CC'$ that are $\fs'$-inflations. 
That is, there are complexes $U^{\raisebox{0.5pt}{\scalebox{0.6}{$\bullet$}}}$, $V^{\raisebox{0.5pt}{\scalebox{0.6}{$\bullet$}}}$ in $\com^{n}_{\CC'}$ and $\BE'$-extensions $\delta\in\BE'(U^{n+1},A)$, $\eps\in\BE'(V^{n+1},B)$ such that  $\fs'(\delta)=[U^{\raisebox{0.5pt}{\scalebox{0.6}{$\bullet$}}}]$, $\fs'(\eps)=[V^{\raisebox{0.5pt}{\scalebox{0.6}{$\bullet$}}}]$, $a=d_{U}^{0}$ and $b=d_{V}^{0}$. 
Since $\delta$ 
is also an $\BE$-extension, there is a complex $X^{\raisebox{0.5pt}{\scalebox{0.6}{$\bullet$}}}$ 
in $\com^{n}_{\CC}$ with $X^{0}=\SG A$ and $X^{n+1}=\SG U^{n+1}$, 
such that $\fs(\delta)=[X^{\raisebox{0.5pt}{\scalebox{0.6}{$\bullet$}}}]$. 
Thus, we have 
$\fs'(\delta)=
[W^{\raisebox{0.5pt}{\scalebox{0.6}{$\bullet$}}}]
$, 
where $W^{\raisebox{0.5pt}{\scalebox{0.6}{$\bullet$}}}$ 
is the complex 
\[
\begin{tikzcd}[column sep=2.15cm]
A \arrow{r}{(\SF d_{X}^{0})\Phi_{A}}& \SF X^{1} \arrow{r}{\SF d_{X}^{1}}& \cdots \arrow{r}{\SF d_{X}^{n-1}}& \SF X^{n} \arrow{r}{\tensor[]{\Phi}{_{U^{n+1}}^{-1}}\SF d_{X}^{n}}&U^{n+1}.
\end{tikzcd}
\] 
In particular, this implies that the complexes $U^{\raisebox{0.5pt}{\scalebox{0.6}{$\bullet$}}}$ and 
$W^{\raisebox{0.5pt}{\scalebox{0.6}{$\bullet$}}}$
are homotopy equivalent in $\com_{(A,U^{n+1})}^{n}$, 
and in turn implies that 
$\SG_{\com} U^{\raisebox{0.5pt}{\scalebox{0.6}{$\bullet$}}}$ and $\SG_{\com} W^{\raisebox{0.5pt}{\scalebox{0.6}{$\bullet$}}}$ 
are homotopy equivalent in $\com^{n}_{(\SG A,\SG U^{n+1})}$. 

Next we show that the following diagram depicts an isomorphism of complexes $\SG_{\com}W^{\raisebox{0.5pt}{\scalebox{0.6}{$\bullet$}}} \to X^{\raisebox{0.5pt}{\scalebox{0.6}{$\bullet$}}}$.
\begin{equation}\label{eqn:GFX-dot-homotopic-to-X-dot}
\hspace{-9pt}
\begin{tikzcd}[column sep=2.43cm]
\SG A \arrow{r}{(\SG \SF d_{X}^{0})\SG \Phi_{A}}\arrow[equals]{d}& \SG \SF X^{1} \arrow{r}{\SG \SF d_{X}^{1}}\arrow{d}{\Psi_{X^{1}}}& \cdots \arrow{r}{\SG \SF d_{X}^{n-1}}& \SG \SF X^{n} \arrow{r}{\tensor[]{\SG \Phi}{_{U^{n+1}}^{-1}}(\SG \SF d_{X}^{n})}\arrow{d}{\Psi_{X^{n}}}&\SG U^{n+1}\arrow[equals]{d}\\
\SG A \arrow{r}{d_{X}^{0}}& X^{1} \arrow{r}{d_{X}^{1}}& \cdots \arrow{r}{d_{X}^{n-1}}& X^{n} \arrow{r}{d_{X}^{n}}&\SG U^{n+1}.
\end{tikzcd}
\end{equation} 
For each $i=0,\ldots, n$, the square  
\begin{equation}\label{eqn:comm-square-dXi-using-nat-iso-Psi}
\begin{tikzcd}[column sep=1.5cm]
\SG \SF X^{i} \arrow{r}{\SG \SF d_{X}^{i}}\arrow{d}{\Psi_{X^{i}}}& \SG \SF X^{i+1}\arrow{d}{\Psi_{X^{i+1}}} \\
X^{i} \arrow{r}{d_{X}^{i}}& X^{i+1}
\end{tikzcd}
\end{equation}
commutes since $\Psi$ is a natural isomorphism. 
This shows that all except possibly the far left and far right squares in \eqref{eqn:GFX-dot-homotopic-to-X-dot} commute. 
To see the far left square commutes, observe that 
\begin{align*}
\Psi_{X^{1}}(\SG\SF d_{X}^{0}) \SG \Phi_{A} 
	&= d_{X}^{0}\Psi_{\SG A}\SG \Phi_{A} && \text{by } \eqref{eqn:comm-square-dXi-using-nat-iso-Psi}\\
	&= d_{X}^{0}\circ 1_{\SG A}&&\text{using the triangle identities } \eqref{eqn:triangle-identities} \\
	& = d_{X}^{0}.
\end{align*}
Similarly, one can show the far right square of \eqref{eqn:GFX-dot-homotopic-to-X-dot} also commutes, and hence \eqref{eqn:GFX-dot-homotopic-to-X-dot} is an isomorphism of complexes in $\com_{(\SG A, \SG U^{n+1})}^{n}$. 
This implies that 
$\fs(\delta) 
	= [X^{\raisebox{0.5pt}{\scalebox{0.6}{$\bullet$}}}]
	= [\SG_{\com} W^{\raisebox{0.5pt}{\scalebox{0.6}{$\bullet$}}}]
	= [\SG_{\com} U^{\raisebox{0.5pt}{\scalebox{0.6}{$\bullet$}}}]$, 
so $\SG a$ is an $\fs$-inflation.

In the same way, it can be shown that 
$\SG b$ is also an $\fs$-inflation. 
Since $(\CC,\BE,\fs)$ satisfies axiom \ref{nEA1}, we have that the composition $\SG (ba)=\SG b \circ \SG a$ is an $\fs$-inflation. 
Thus, there exist $Z^{n+1}\in\CC$, $\zeta\in\BE(Z^{n+1},\SG A)$ and $Z^{\raisebox{0.5pt}{\scalebox{0.6}{$\bullet$}}}\in\com_{(\SG A,Z^{n+1})}^{n}$, such that 
$\fs(\zeta)=[Z^{\raisebox{0.5pt}{\scalebox{0.6}{$\bullet$}}}]$, $Z^{0}=\SG A$, $Z^{1}=\SG C$ and $d_{Z}^{0}=\SG(ba)$. 
As $\SG$ is essentially surjective, there is an object $T\in\CC'$ and an isomorphism $h\colon \SG T\to Z^{n+1}$. 
Denote by $j\deff h^{-1}\colon Z^{n+1}\to\SG T$ the inverse for $h$. 
Since 
$\zeta 
= (1_{Z^{n+1}})^{\BE}\zeta 
= (hj)^{\BE}\zeta
= j^{\BE}h^{\BE}\zeta$, 
we have 
a sequence 
\[
\begin{tikzcd}[column sep=1.3cm]
\zeta=j^{\BE}(h^{\BE}\zeta)\arrow{r}{(1_{\SG A}, j)} &  h^{\BE}\zeta\arrow{r}{(1_{\SG A}, h)}& \zeta
\end{tikzcd}
\] 
of morphisms of $\BE$-extensions, which may be realised as follows
\[
\begin{tikzcd}[column sep=0.9cm]
\phantom{lll}Z^{\raisebox{0.5pt}{\scalebox{0.6}{$\bullet$}}}\colon \arrow{d}{j^{\raisebox{0.8pt}{\scalebox{0.5}{$\bullet$}}}}& \SG A \arrow{r}{\SG (ba)}\arrow[equals]{d}& \SG C \arrow{r}{d_{Z}^{1}}\arrow{d}{j^{1}}& Z^{2} \arrow{r}{d_{Z}^{2}}\arrow{d}{j^{2}}& \cdots \arrow{r}{d_{Z}^{n-1}}& Z^{n} \arrow{r}{d_{Z}^{n}}\arrow{d}{j^{n}}& Z^{n+1}\arrow{d}{j}[swap]{\iso}\\
\phantom{lll}D^{\raisebox{0.5pt}{\scalebox{0.6}{$\bullet$}}}\colon \arrow{d}{h^{\raisebox{0.8pt}{\scalebox{0.5}{$\bullet$}}}}& \SG A \arrow{r}{}\arrow[equals]{d}& D^{1}\arrow{r}{} \arrow{d}{h^{1}}& D^{2}\arrow{r}{}\arrow{d}{h^{2}} & \cdots \arrow{r}{}& D^{n} \arrow{r}{}\arrow{d}{h^{n}}& \SG T\arrow{d}{h}[swap]{\iso}\\
\phantom{lll}Z^{\raisebox{0.5pt}{\scalebox{0.6}{$\bullet$}}}\colon & \SG A \arrow{r}{\SG (ba)}& \SG C \arrow{r}{d_{Z}^{1}}& Z^{2} \arrow{r}{d_{Z}^{2}}& \cdots \arrow{r}{d_{Z}^{n-1}}& Z^{n} \arrow{r}{d_{Z}^{n}}& Z^{n+1}
\end{tikzcd}
\] 
Note that $\lan D^{\raisebox{0.5pt}{\scalebox{0.6}{$\bullet$}}}, h^{\BE}\zeta \ran$ is an $n$-exangle as $\fs(h^{\BE}\zeta) = [D^{\raisebox{0.5pt}{\scalebox{0.6}{$\bullet$}}}]$. 
Let $E^{\raisebox{0.5pt}{\scalebox{0.6}{$\bullet$}}}$ denote the complex 
\[
\begin{tikzcd}[column sep=0.9cm]
\SG A \arrow{r}{\SG (ba)}& \SG C \arrow{r}{d_{Z}^{1}}& Z^{2} \arrow{r}{d_{Z}^{2}}& \cdots \arrow{r}{d_{Z}^{n-1}}& Z^{n} \arrow{r}{jd_{Z}^{n}}& \SG T
\end{tikzcd}
\] 
in $\com_{(\SG A, \SG T)}^{n}$. 
It is straightforward to verify that 
$\lan E^{\raisebox{0.5pt}{\scalebox{0.6}{$\bullet$}}}, h^{\BE}\zeta\ran$ 
is also an $n$-exangle.
Moreover, we have morphisms 
\[
(1_{\SG A},j^{1},\ldots,j^{n}, 1_{\SG T})\in\com_{(\SG A, \SG T)}^{n}
(E^{\scalebox{0.6}{$\bullet$}},D^{\scalebox{0.6}{$\bullet$}})
\hspace{1cm}
\text{and}
\hspace{1cm}
(1_{\SG A},h^{1},\ldots,h^{n}, 1_{\SG T})\in\com_{(\SG A, \SG T)}^{n}
(D^{\scalebox{0.6}{$\bullet$}},E^{\scalebox{0.6}{$\bullet$}}).
\] 
Thus, by \cite[Prop.\ 2.21]{HerschendLiuNakaoka-n-exangulated-categories-I-definitions-and-fundamental-properties}, we have that $(1_{\SG A},j^{1},\ldots,j^{n}, 1_{\SG T})$ is a homotopy equivalence, and so 
$\fs(h^{\BE}\zeta) 
= [D^{\raisebox{0.5pt}{\scalebox{0.6}{$\bullet$}}}]
= [E^{\raisebox{0.5pt}{\scalebox{0.6}{$\bullet$}}}]$. 
Therefore, we have 
\[
\fs'(h^{\BE}\zeta)= [
\begin{tikzcd}[column sep=1.65cm]
A \arrow{r}{(\SF\SG ba)\Phi_{A}}& \SF\SG C \arrow{r}{\SF d_{Z}^{1}}& \SF Z^{2} \arrow{r}{\SF d_{Z}^{2}}& \cdots \arrow{r}{\SF d_{Z}^{n-1}}& \SF Z^{n} \arrow{r}{d}& T
\end{tikzcd}],
\] 
where $d\deff\tensor[]{\Phi}{_{T}^{-1}}\SF (jd_{Z}^{n})$. 
Since there is an isomorphism 
\[
\begin{tikzcd}[column sep=1.65cm, row sep=1cm]
A \arrow{r}{(\SF\SG ba)\Phi_{A}}\arrow[equals]{d}& \SF\SG C \arrow{r}{\SF d_{Z}^{1}}\arrow{d}{\tensor[]{\Phi}{_{C}^{-1}}}& \SF Z^{2} \arrow{r}{\SF d_{Z}^{2}}\arrow[equals]{d}& \cdots \arrow{r}{\SF d_{Z}^{n-1}}& \SF Z^{n} \arrow{r}{d}\arrow[equals]{d}& T\arrow[equals]{d}\\
A \arrow{r}{ba}& C \arrow{r}{(\SF d_{Z}^{1})\Phi_{C}}& \SF Z^{2} \arrow{r}{\SF d_{Z}^{2}}& \cdots \arrow{r}{\SF d_{Z}^{n-1}}& \SF Z^{n} \arrow{r}{d}& T
\end{tikzcd}
\]
of complexes in $\com_{(A,T)}^{n}$, we see that $ba$ is indeed an $\fs'$-inflation. 
Dually, $\fs'$-deflations are closed under composition, and $(\CC',\BE',\fs')$ satisfies \ref{nEA1}.

Lastly, we show \ref{nEA2} holds for $(\CC',\BE',\fs')$; ($n$-EA$2^{\op}$) can be shown using a similar argument. 
Thus, let $\delta \in \BE'(D,A)$ and $c\colon C\to D$ be arbitrary, where $A,C,D\in\CC'$. 
Then we may realise 
$c^{\BE'}\delta=(\SG c)^{\BE}\delta\in\BE(\SG C,\SG A)$ 
by a complex
\[
\begin{tikzcd}
\SG A \arrow{r}{d_{X}^{0}} & \SG B\arrow{r}{d_{X}^{1}} & X^{2} \arrow{r}{d_{X}^{2}}& \cdots\arrow{r}{d_{X}^{n-1}} & X^{n} \arrow{r}{d_{X}^{n}}& \SG C
\end{tikzcd}
\] 
in $\com_{\CC}^{n}$ for some $B\in \CC'$, using that $\SG$ essentially surjective. 
Since $\SG$ is also full, we have that $d_{X}^{0}=\SG a$ for some $a\colon A\to B$. 
Furthermore, $\delta\in\BE(\SG D,\SG A)$ so we may realise this $\BE$-extension by a complex 
\[
\begin{tikzcd}
\SG A \arrow{r}{d_{Y}^{0}}& Y^{1}\arrow{r}{d_{Y}^{1}} &Y^{2}\arrow{r}{d_{Y}^{2}}& \cdots\arrow{r}{d_{Y}^{n-1}} & Y^{n} \arrow{r}{d_{Y}^{n}}& \SG D
\end{tikzcd}
\] 
in $\com^{n}_{\CC}$. 
By \ref{nEA2} for the $n$-exangulated category $(\CC,\BE,\fs)$, we may realise the morphism 
$(1_{\SG A},\SG c)\colon(\SG c)^{\BE}\delta\to \delta$ 
of $\BE$-extensions by a good lift $f^{\raisebox{0.5pt}{\scalebox{0.6}{$\bullet$}}}=(1_{\SG A},f^{1},f^{2},\ldots,f^{n},\SG c)$. 
That is, $f^{\raisebox{0.5pt}{\scalebox{0.6}{$\bullet$}}}$ is a morphism 
\[
\begin{tikzcd}[column sep=1.2cm, row sep=1cm]
\SG A \arrow{r}{d_{X}^{0}} \arrow[equals]{d}& \SG B\arrow{r}{d_{X}^{1}} \arrow{d}{f^{1}}& X^{2} \arrow{r}{d_{X}^{2}}\arrow{d}{f^{2}}& \cdots\arrow{r}{d_{X}^{n-1}} & X^{n} \arrow{r}{d_{X}^{n}}\arrow{d}{f^{n}}& \SG C\arrow{d}{\SG c}\\
\SG A \arrow{r}{d_{Y}^{0}}& Y^{1}\arrow{r}{d_{Y}^{1}} &Y^{2}\arrow{r}{d_{Y}^{2}} & \cdots\arrow{r}{d_{Y}^{n-1}} & Y^{n} \arrow{r}{d_{Y}^{n}}& \SG D
\end{tikzcd}
\] 
of complexes such that 
$\fs((d_{X}^{0})_{\BE}\delta)=[M^{\raisebox{0.5pt}{\scalebox{0.6}{$\bullet$}}}]$, 
where $M^{\raisebox{0.5pt}{\scalebox{0.6}{$\bullet$}}}\deff\cone(\wh{f\hspace{1.5pt}})^{\raisebox{0.5pt}{\scalebox{0.6}{$\bullet$}}}$ is the mapping cone 
\[
\begin{tikzcd}
\SG B \arrow{r}{d_{M}^{-1}}& X^{2}\oplus Y^{1} \arrow{r}{d_{M}^{0}}& \cdots \arrow{r}{}& X^{n}\oplus Y^{n-1} \arrow{r}{d_{M}^{n-2}}& \SG C\oplus Y^{n} \arrow{r}{d_{M}^{n-1}}& \SG D
\end{tikzcd}
\]
of $\wh{f}=(f^{1},\ldots,f^{n},\SG c)$. 
As in Definition \ref{def:mapping-cone-as-in-Jasso}, the differentials are  
$d_{M}^{-1}\deff \begin{psmallmatrix}-d_{X}^{1}\\f^{1} \end{psmallmatrix}$, 
$d_{M}^{n-1}\deff(\,\SG c\;\;\,  d_{Y}^{n}\,)$, 
and 
\[
d_{M}^{i}\deff
\begin{pmatrix}-d_{X}^{i+2} & 0\\f^{i+2} & d_{Y}^{i+1} \end{pmatrix}
\]
for $i\in\{0,\ldots,n-2\}$. 
Then we see that the morphism $(1_{A},c)\colon c^{\BE'}\delta \to \delta$ of $\BE'$-extensions is realised by the morphism 
\[
\begin{tikzcd}[column sep=1.6cm, row sep=1cm]
A \arrow{r}{(\SF d_{X}^{0})\Phi_{A}} \arrow[equals]{d}& \SF \SG B\arrow{r}{\SF d_{X}^{1}} \arrow{d}{\SF f^{1}}& \SF X^{2} \arrow{r}{\SF d_{X}^{2}}\arrow{d}{\SF f^{2}}& \cdots\arrow{r}{\SF d_{X}^{n-1}} & \SF X^{n} \arrow{r}{\tensor[]{\Phi}{_{C}^{-1}}\SF d_{X}^{n}}\arrow{d}{\SF f^{n}}& C\arrow{d}{c}\\
A \arrow{r}{(\SF d_{Y}^{0})\Phi_{A}}& \SF Y^{1}\arrow{r}{\SF d_{Y}^{1}} &\SF Y^{2}\arrow{r}{\SF d_{Y}^{2}} & \cdots\arrow{r}{\SF d_{Y}^{n-1}} & \SF Y^{n} \arrow{r}{\tensor[]{\Phi}{_{D}^{-1}}\SF d_{Y}^{n}}& D
\end{tikzcd}
\] 
of complexes in $\CC'$ 
since $\Phi\colon \idfunc{\CC'}\Rightarrow \SF \SG$ is a natural isomorphism. 
Using $\Phi$ again and that $\SG a=d_{X}^{0}$, we can adjust the diagram above appropriately to obtain a morphism $g^{\raisebox{0.5pt}{\scalebox{0.6}{$\bullet$}}}\deff(1_{A},(\SF f^{1})\Phi_{B},\SF f^{2},\ldots, \SF f^{n}, c)$
\[
\begin{tikzcd}[column sep=1.6cm, row sep=1cm]
A \arrow{r}{a} \arrow[equals]{d}& B\arrow{r}{(\SF d_{X}^{1})\Phi_{B}} \arrow{d}{(\SF f^{1})\Phi_{B}}& \SF X^{2} \arrow{r}{\SF d_{X}^{2}}\arrow{d}{\SF f^{2}}& \cdots\arrow{r}{\SF d_{X}^{n-1}} & \SF X^{n} \arrow{r}{\tensor[]{\Phi}{_{C}^{-1}}\SF d_{X}^{n}}\arrow{d}{\SF f^{n}}& C\arrow{d}{c}\\
A \arrow{r}{(\SF d_{Y}^{0})\Phi_{A}}& \SF Y^{1}\arrow{r}{\SF d_{Y}^{1}} &\SF Y^{2}\arrow{r}{\SF d_{Y}^{2}} & \cdots\arrow{r}{\SF d_{Y}^{n-1}} & \SF Y^{n} \arrow{r}{\tensor[]{\Phi}{_{D}^{-1}}\SF d_{Y}^{n}}& D,
\end{tikzcd}
\] 
giving another realisation of $(1_{A},c)\colon c^{\BE'}\delta \to \delta$.

We claim that $\fs'(a_{\BE'}\delta)=[\cone(\wh{g\hspace{0.5pt}})^{\raisebox{0.5pt}{\scalebox{0.6}{$\bullet$}}}]$, where $\wh{g\hspace{1pt}}^{\raisebox{0.5pt}{\scalebox{0.6}{$\bullet$}}}\deff((\SF f^{1})\Phi_{B},\SF f^{2},\ldots, \SF f^{n}, c)$. 
Since $\fs((d_{X}^{0})_{\BE}\delta)=[M^{\raisebox{0.5pt}{\scalebox{0.6}{$\bullet$}}}]$ and $\SG a=d_{X}^{0}$, we have that 
$
\fs'(a_{\BE'}\delta)=\fs'((\SG a)_{\BE}\delta)
=\fs'((d_{X}^{0})_{\BE}\delta)$
is the homotopy equivalence class
\[
[\begin{tikzcd}[column sep=1.35cm]
B \arrow{r}{(\SF d_{M}^{-1})\Phi_{B}}& \SF X^{2}\oplus \SF Y^{1} \arrow{r}{\SF d_{M}^{0}}& \cdots \arrow{r}{\SF d_{M}^{n-3}}& \SF X^{n}\oplus \SF Y^{n-1} \arrow{r}{\SF d_{M}^{n-2}}& \SF \SG C\oplus \SF Y^{n} \arrow{r}{\tensor[]{\Phi}{_{D}^{-1}}\SF d_{M}^{n-1}}& D
\end{tikzcd}].
\] 
Note that there is an isomorphism
\[
\hspace*{-3pt}
\begin{tikzcd}[column sep=1.45cm, row sep=1.3cm, ampersand replacement=\&]
B \arrow{r}{(\SF d_{M}^{-1})\Phi_{B}} \arrow[equals]{d}\& \SF X^{2}\oplus \SF Y^{1} \arrow{r}{\SF d_{M}^{0}} \arrow[equals]{d}\& \cdots \arrow{r}{\SF d_{M}^{n-3}} \& \SF X^{n}\oplus \SF Y^{n-1} \arrow{r}{\SF d_{M}^{n-2}} \arrow[equals]{d}\& C\oplus \SF Y^{n} \arrow{r}{e} \arrow{d}{\begin{psmallmatrix}
\Phi_{C} & 0 \\ 0 & 1_{\SF Y^{n}}
\end{psmallmatrix}}[swap]{\iso}\& D \arrow[equals]{d}\\
B \arrow{r}{(\SF d_{M}^{-1})\Phi_{B}} \& \SF X^{2}\oplus \SF Y^{1} \arrow{r}{\SF d_{M}^{0}} \& \cdots \arrow{r}{\SF d_{M}^{n-3}} \& \SF X^{n}\oplus \SF Y^{n-1} \arrow{r}{\SF d_{M}^{n-2}} \& \SF \SG C\oplus \SF Y^{n} \arrow{r}{\tensor[]{\Phi}{_{D}^{-1}}\SF d_{M}^{n-1}} \& D
\end{tikzcd}
\] 
of complexes, where $e\deff(\,
c \;\;\;\, \tensor[]{\Phi}{_{D}^{-1}}\SF d_{Y}^{n}\,)$, in which the top row is precisely $\cone(\wh{g\hspace{0.5pt}})^{\raisebox{0.5pt}{\scalebox{0.6}{$\bullet$}}}$. 
Hence, we see that the $\BE'$-extension $a_{\BE'}\delta$ is realised by $\cone(\wh{g\hspace{0.5pt}})^{\raisebox{0.5pt}{\scalebox{0.6}{$\bullet$}}}$, and $g^{\raisebox{0.5pt}{\scalebox{0.6}{$\bullet$}}}$ is a good lift of $(1_{A},c)$. 
Thus, \ref{nEA2} holds for $(\CC',\BE',\fs')$ and we are done.

\end{proof}

\begin{rem}

In Theorem \ref{thm:transport-of-n-exangulated-structure}, the $n$-exangulated structure possessed by a category $\CC$ is transported across an equivalence $\SF\colon \CC\to\CC'$, so that $\CC'$ becomes an $n$-exangulated category and $\SF$ becomes an $n$-exangle equivalence. 
We remark here that the $n$-exangulated structure acquired by $\CC'$ is unique up to $n$-exangle equivalence. 
Analogous statements also hold for the settings of Theorems \ref{thm:transport-of-n-angulated-structure}, \ref{thm:transport-of-n-exact-structure} and  \ref{thm:transport-of-n-abelian-structure}.

\end{rem}

\begin{rem}

Suppose $\SF\colon\CC\to\CC'$ and $\SG\colon\CC'\to\CC$ are functors such that $(\SF,\SG)$ and $(\SG,\SF)$ are both adjoint pairs. 
Recall that this holds whenever $\SF$ is an equivalence and $\SG$ is a quasi-inverse for $\SF$, but that the converse is false in general\footnote{We refer the reader to an example provided by user `Hanno' on an online forum; see \url{https://math.stackexchange.com/q/1095196}.}. 
We note here that our proof of Theorem \ref{thm:transport-of-n-exangulated-structure} cannot be directly generalised to this setting: 
we rely on the assumption that $\SF$ and $\SG$ are equivalences, so that the unit and counit are both natural isomorphisms. 
This is needed in Lemma \ref{lem:transport-of-exact-realisation} to check that the assignment $\fs'$ is well-defined, as well as in both Lemma \ref{lem:transport-of-exact-realisation} and Theorem \ref{thm:transport-of-n-exangulated-structure} to produce several isomorphisms between complexes.

\end{rem}


\medskip
\section{Special case: skeletal categories}
\label{sec:special-case-skeletal-categories}

Suppose $\CC'$ is a skeletal category equivalent to a weak (pre-)$(n+2)$-angulated category. Then we know $\CC'$ is also weak (pre-)$(n+2)$-angulated by Theorem \ref{thm:transport-of-n-angulated-structure}. The goal of \S\ref{sec:special-case-skeletal-categories} is to show that the induced $(n+2)$-suspension functor of $\CC'$ is actually an \emph{automorphism}, not just an autoequivalence, thereby showing $\CC'$ is what we call a strong (pre-)$(n+2)$-angulated category.

First, let us recall what it means for a category to be skeletal and how a skeleton of a category is defined.

\begin{defn}
\label{def:skeletal-category}

\cite[\S{}IV.4]{MacLane-categories-for-the-working-mathematician} 
A category $\CA$ is said to be \emph{skeletal} if $X\neq Y$ implies $X$ is not isomorphic to $Y$ in $\CA$ for all $X,Y\in\CA$.

\end{defn}

\begin{defn}
\label{def:skeleton-of-category}

\cite[\S{}IV.4]{MacLane-categories-for-the-working-mathematician} 
Let $\CA$ be a category. A \emph{skeleton} of $\CA$ is a full, skeletal subcategory $\CB$ of $\CA$ such that the (fully faithful) canonical inclusion $\SF\colon \CB \into \CA$ is essentially surjective.

\end{defn}

\begin{rem}

Note that $\CA$ may have many skeletons, but they are all isomorphic; see \cite[Exer. IV.4.1]{MacLane-categories-for-the-working-mathematician}. 

\end{rem}

The following proposition is a key tool in the proof of the main result of \S\ref{sec:special-case-skeletal-categories}. We include the details for the convenience of the reader.

\begin{prop}
\label{prop:equivalence-between-skeletal-categories-is-an-isomorphism}

\cite[\S{}I, 1.364]{FreydScedrov-categories-allegories}\footnote{Although this result is stated in \cite{FreydScedrov-categories-allegories}, we would like to acknowledge that user `Eric Wofsey' provides the idea behind our proof on an online forum; see \url{https://math.stackexchange.com/q/1658991}.}
Suppose $\CA$ and $\CB$ are both skeletal categories and that $\SF\colon \CA\to \CB$ is an equivalence of categories. Then $\SF$ is an isomorphism of categories.

\end{prop}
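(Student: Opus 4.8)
The plan is to show that an equivalence $\SF\colon\CA\to\CB$ between skeletal categories is bijective on objects and fully faithful, hence an isomorphism of categories. Being fully faithful is already part of the hypothesis that $\SF$ is an equivalence (any equivalence is fully faithful up to replacing it by a naturally isomorphic functor, but here we want $\SF$ itself to be fully faithful — and it is, since an equivalence is in particular fully faithful). So the crux is bijectivity on objects.

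\smallskip

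First I would fix a quasi-inverse $\SG\colon\CB\to\CA$ together with the unit $\Psi\colon\SG\SF\Rightarrow\idfunc{\CA}$ and counit $\Phi\colon\idfunc{\CB}\Rightarrow\SF\SG$ (natural isomorphisms), as in Setup \ref{con:equivalence-unit-counit-setup}. For each object $A\in\CA$, the component $\Psi_{A}\colon\SG\SF A\to A$ is an isomorphism in $\CA$; since $\CA$ is skeletal, this forces $\SG\SF A = A$. Likewise, for each $B\in\CB$ the isomorphism $\Phi_{B}\colon B\to\SF\SG B$ in $\CB$ and skeletality of $\CB$ give $\SF\SG B = B$. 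Hence $\SG\SF = \idfunc{\CA}$ and $\SF\SG = \idfunc{\CB}$ on objects, so $\SF$ is a bijection on objects with inverse (on objects) given by $\SG$.

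\smallskip

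Next I would upgrade this to full compatibility with composition, i.e.\ check $\SF$ is an isomorphism of categories rather than merely a bijective-on-objects equivalence. Since $\SF$ is fully faithful, for all $A,A'\in\CA$ the map $\SF_{A,A'}\colon\CA(A,A')\to\CB(\SF A,\SF A')$ is a bijection, and it is automatically functorial (preserves identities and composites). Combining the object bijection with these hom-set bijections, $\SF$ has a two-sided inverse functor: define $\SG'\colon\CB\to\CA$ on objects by $\SG'B = \SG B$ and on morphisms by $\SG'_{B,B'} \deff (\SF_{\SG B,\SG B'})^{-1}$, using $\SF\SG B = B$ and $\SF\SG B' = B'$. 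One checks $\SG'$ is a functor and that $\SF\SG' = \idfunc{\CB}$ and $\SG'\SF = \idfunc{\CA}$ strictly, as functors. Therefore $\SF$ is an isomorphism of categories.

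\smallskip

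I expect the only genuinely delicate point — and it is a short one — to be making sure that the hom-set bijections assemble into an honest inverse functor \emph{on the nose}, using that the object map is a strict bijection (not just essentially surjective). Once skeletality has been used to turn the unit and counit into identities on objects, everything else is a routine unwinding of fully faithfulness. One should also note explicitly that $\SG'$ so defined coincides with $\SG$ on objects and is naturally isomorphic to $\SG$ as a functor (indeed equal, once one observes $\SG$ already sends morphisms via $(\SF_{\SG B,\SG B'})^{-1}$ up to the isomorphisms $\Psi$, which are identities), so no ambiguity arises.
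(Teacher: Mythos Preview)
Your proof is correct and takes essentially the same approach as the paper's: use skeletality to show $\SF$ is bijective on objects, then use full faithfulness to build a strict inverse on morphisms. One minor slip in your closing parenthetical: the components $\Psi_{A}$ are automorphisms of $A$ (since $\SG\SF A = A$ on objects) but need not be identity \emph{morphisms}, so $\SG$ and $\SG'$ may genuinely differ on arrows---this is harmless, since you already have $\SG'$ as the strict inverse and never actually need $\SG' = \SG$.
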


\begin{proof}

We need to define a functor $\SG\colon \CB \to \CA$ such that $\SG\SF = \idfunc{\CA}$ and $\SF\SG = \idfunc{\CB}$. In order to do this, we first show that $\SF$ is bijective on objects. 
Let $X\in\CB$ be arbitrary. As $\SF$ is an equivalence, $\SF$ is essentially surjective and so there exists $A\in\CA$ such that $\SF A\iso X$. Since $\CB$ is skeletal, we must have $\SF A = X$. Thus, $\SF$ is surjective on objects. 
Now suppose we have $A,B\in \CA$ such that 
$\SF A = X= \SF B$ in $\CB$. Consider the identity morphism 
$1_{X}\in\CB(\SF A, \SF B) = \CB(\SF B, \SF A).$ 
Since $\SF$ is full, there exist morphisms $f\colon A\to B$ 
and $g\colon B\to A$ such that $\SF f = 1_{X}$ and $\SF g = 1_{X}$. Then $\SF(gf) = 1_{X} = \SF(1_{A})$ and $\SF(fg) = 1_{X} = \SF(1_{B})$, whence $gf = 1_{A}$ and $fg = 1_{B}$ as $\SF$ is faithful. Thus, $A\iso B$ and we have $A=B$ as $\CA$ is skeletal. Hence, $\SF$ is injective on objects also. 

Now define $\SG\colon \CB \to \CA$ as follows. For an object $X\in\CB$, there is precisely one object $A\in\CA$ such that $\SF A = X$ (as $\SF$ is bijective on objects). In this case, put $\SG(X) = A$. Let $f\colon X\to Y$ be an arbitrary morphism in $\CB$. Then $X = \SF A$ and $Y = \SF B$ for some uniquely determined $A,B\in \CA$, so $f\in\CB(\SF A, \SF B)$. As $\SF$ is fully faithful, there is a unique morphism $a\in\CA(A,B)$ such that $\SF a = f$. Thus, set $\SG(f) = a$.

Let us show that $\SG$ is indeed a functor. Let $X\in \CB$ be arbitrary and consider the identity morphism $1_{X}$.  
Then $X=\SF A$ for some $A\in\CA$ and $1_{X} =\SF(1_{A})$, so $\SG(1_{X}) = 1_{A} = 1_{\SG X}$ is the identity morphism. Now suppose we have morphisms $f\colon X\to Y$ and $g\colon Y\to Z$ in $\CB$. 
Then $f = \SF a\colon X= \SF A \to \SF B = Y$ and $g = \SF b\colon Y=\SF B \to \SF C = Z$, where $A,B,C$ and $a,b$ are all uniquely determined. Note that $gf = (\SF b)(\SF a) = \SF(ba)$, so $\SG(gf) = ba = (\SG g)(\SG f)$ and $\SG$ respects composition.

Lastly, let us show that $\SF$ and $\SG$ are mutually inverse functors. Let $A\in\CA$ be arbitrary. 
By definition of $\SG$, the object $\SG(\SF A)$ in $\CA$ is the object $A'\in\CA$ such that $\SF A' = \SF A$. 
Since $\SF$ is injective on objects, we must have $A'=A$ and $\SG(\SF A) = A$. 
Now let $a\colon A\to B$ be an arbitrary morphism in $\CA$, and consider $\SF a\colon \SF A\to \SF B$. 
Then $\SG(\SF a)$ is the unique morphism $a'\colon A\to B$ such that $\SF a'= \SF a$. Using that $\SF$ is faithful, we obtain $a'=a$ and $\SG (\SF a) = a$. 
Hence, we have shown $\SG\SF = \idfunc{\CA}$. 
Similarly, one can also show $\SF\SG = \idfunc{\CB}$.

\end{proof}

We are now in a position to improve Theorem \ref{thm:transport-of-n-angulated-structure} when transporting a weak (pre-)$(n+2)$-angulated structure to a skeletal category.

\begin{thm}
\label{thm:transport-of-n-angulated-structure-to-skeletal-category}

Let $(\CC, \sus, \CT)$ be a weak (pre-)$(n+2)$-angulated category and let $\CC'$ be any skeletal category. Suppose $\SF\colon \CC \to \CC'$ is an equivalence. Then $\CC'$ is a strong (pre-)$(n+2)$-angulated category 
and $\SF$ is an $(n+2)$-angle equivalence.

\end{thm}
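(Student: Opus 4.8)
The plan is to obtain almost all of the statement immediately from Theorem \ref{thm:transport-of-n-angulated-structure}, and then to upgrade the autoequivalence produced there to an automorphism by exploiting the hypothesis that $\CC'$ is skeletal, via Proposition \ref{prop:equivalence-between-skeletal-categories-is-an-isomorphism}.

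First I would apply Theorem \ref{thm:transport-of-n-angulated-structure} verbatim to the equivalence $\SF\colon\CC\to\CC'$. This yields that $\CC'$ is a weak (pre-)$(n+2)$-angulated category --- with, say, $(n+2)$-suspension functor $\sus'$ and (pre-)$(n+2)$-angulation $\CT'$ as constructed in the proof of that theorem --- and that $\SF$ is an $(n+2)$-angle equivalence. By Theorem \ref{thm:transport-of-additive-structure}, $\CC'$ is additive. Hence the only point that remains, in order to conclude that $(\CC',\sus',\CT')$ is a \emph{strong} (pre-)$(n+2)$-angulated category in the sense of Definition \ref{def:pre-and-n-angulated-category}, is to show that the autoequivalence $\sus'$ of $\CC'$ is in fact an automorphism.

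For this, I would note that $\sus'$ is an equivalence of categories from $\CC'$ to itself, and that $\CC'$ is skeletal by assumption. Thus Proposition \ref{prop:equivalence-between-skeletal-categories-is-an-isomorphism} applies with $\CA=\CB=\CC'$ and with the equivalence taken to be $\sus'$, and it gives that $\sus'$ is an isomorphism of categories, i.e.\ an automorphism of $\CC'$. Combining this with the previous paragraph, $(\CC',\sus',\CT')$ is a strong (pre-)$(n+2)$-angulated category and $\SF$ is an $(n+2)$-angle equivalence, which is exactly the claim.

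There is no substantial obstacle here: once Theorem \ref{thm:transport-of-n-angulated-structure} and Proposition \ref{prop:equivalence-between-skeletal-categories-is-an-isomorphism} are available, the deduction is only a couple of lines. The sole thing needing a moment's care is to check that the endofunctor of $\CC'$ to which Proposition \ref{prop:equivalence-between-skeletal-categories-is-an-isomorphism} is applied is genuinely the $(n+2)$-suspension $\sus'$ delivered by the transport theorem; but this is immediate, since in that proof $\sus'$ is defined as $\SF\sus\SG$, a functor $\CC'\to\CC'$ already shown there to be an autoequivalence.
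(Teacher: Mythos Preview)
Your proposal is correct and follows essentially the same approach as the paper: apply Theorem \ref{thm:transport-of-n-angulated-structure} to obtain the weak (pre-)$(n+2)$-angulated structure on $\CC'$ and the $(n+2)$-angle equivalence, then invoke Proposition \ref{prop:equivalence-between-skeletal-categories-is-an-isomorphism} with $\CA=\CB=\CC'$ to upgrade the autoequivalence $\sus'$ to an automorphism. The paper's proof is just a slightly terser version of exactly this argument.
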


\begin{proof}

An application of Theorem \ref{thm:transport-of-n-angulated-structure} shows that $\CC'$ has the structure of a weak $(n+2)$-angulated category, for some $(n+2)$-suspension functor $\sus'\colon \CC'\to \CC'$ that is an autoequivalence, and that $\SF$ is an $(n+2)$-angle equivalence. Since $\sus'$ is an equivalence between skeletal categories, we have that $\sus'$ is actually an isomorphism of categories by Proposition \ref{prop:equivalence-between-skeletal-categories-is-an-isomorphism}. 
That is, the $(n+2)$-suspension functor $\sus '$ of $\CC'$ is an automorphism and $\CC'$ is a strong (pre-)$(n+2)$-angulated category. 

\end{proof}

As an immediate consequence, we have the following.

\begin{cor}
\label{cor:transport-of-n-angulated-structure-to-skeleton}

Let $(\CC, \sus, \CT)$ be a weak (pre-)$(n+2)$-angulated category. Then any skeleton $\CC^{\skel}$ of $\CC$ is a strong (pre-)$(n+2)$-angulated category.

\end{cor}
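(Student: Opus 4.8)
The plan is to deduce Corollary \ref{cor:transport-of-n-angulated-structure-to-skeleton} directly from Theorem \ref{thm:transport-of-n-angulated-structure-to-skeletal-category}. Let $\CC^{\skel}$ be any skeleton of $\CC$, with canonical inclusion functor $\SF\colon \CC^{\skel}\into\CC$. By Definition \ref{def:skeleton-of-category}, the subcategory $\CC^{\skel}$ is full and skeletal, and $\SF$ is fully faithful and essentially surjective; hence $\SF$ is an equivalence of categories. We wish to apply Theorem \ref{thm:transport-of-n-angulated-structure-to-skeletal-category}, but that result is phrased with the weak $(n+2)$-angulated category as the \emph{source} of the equivalence and the skeletal category as the \emph{target}, whereas here the skeletal category $\CC^{\skel}$ is the source and the weak $(n+2)$-angulated category $\CC$ is the target.

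To reconcile this, I would pass to a quasi-inverse. Since $\SF$ is an equivalence, Setup \ref{con:equivalence-unit-counit-setup} provides a quasi-inverse $\SG\colon \CC\to\CC^{\skel}$, which is itself an equivalence of categories. Now $\SG$ is an equivalence whose source $\CC$ is a weak (pre-)$(n+2)$-angulated category and whose target $\CC^{\skel}$ is skeletal, so Theorem \ref{thm:transport-of-n-angulated-structure-to-skeletal-category} applies verbatim: $\CC^{\skel}$ is a strong (pre-)$(n+2)$-angulated category (and $\SG$ is an $(n+2)$-angle equivalence).

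There is essentially no obstacle here; the only point requiring a moment's care is the direction-of-the-equivalence mismatch noted above, which is resolved simply by taking the quasi-inverse. (One could alternatively note that Theorem \ref{thm:transport-of-n-angulated-structure} already shows $\CC^{\skel}$ is weak (pre-)$(n+2)$-angulated via $\SG$, and then invoke Proposition \ref{prop:equivalence-between-skeletal-categories-is-an-isomorphism} to upgrade the induced autoequivalence $\sus^{\skel}$ to an automorphism, exactly as in the proof of Theorem \ref{thm:transport-of-n-angulated-structure-to-skeletal-category}; but appealing to that theorem directly is cleaner.) This completes the proof.
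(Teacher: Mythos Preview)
Your proposal is correct and follows essentially the same approach as the paper: apply Theorem \ref{thm:transport-of-n-angulated-structure-to-skeletal-category} with $\CC'=\CC^{\skel}$. The paper's proof is terser and does not spell out the direction-of-equivalence issue you identify; your explicit passage to a quasi-inverse $\SG\colon\CC\to\CC^{\skel}$ makes precise what the paper leaves implicit.
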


\begin{proof}

Let $\SF\colon \CC^{\skel}\into \CC$ be the inclusion functor associated to a skeleton $\CC^{\skel}$ of $\CC$. Then $\SF$ is an equivalence of categories, and we may apply Theorem \ref{thm:transport-of-n-angulated-structure-to-skeletal-category} with $\CC'=\CC^{\skel}$.

\end{proof}

We conclude \S\ref{sec:special-case-skeletal-categories} by clarifying some details involved in the construction of the cluster category, a certain orbit category which was first defined in \cite{BMRRT-cluster-combinatorics}. Let $\CA$ be an additive category and suppose $\SG$ is an automorphism of $\CA$. 
The \emph{orbit category} $\CA/\SG$ has the same objects as $\CA$, and
for $X,Y\in\CA/\SG$ we set 
\[
(\CA/\SG)(X,Y)\deff\underset{i\in\BZ}{\coprod}\CA(\SG^{i}X,Y);
\] 
see \cite[Def.\ 2.3]{CibilsMarcos-skew-category-galois-covering-smash-product-of-k-category}, \cite[\S{}1]{Keller-triangulated-orbit-categories}. 
Note that in order to have well-defined composition of morphisms in $\CA/\SG$, we need $\SG$ to be an automorphism.

\begin{example}

Let $k$ be a field and let $\Lambda$ be a finite-dimensional, hereditary $k$-algebra. 
Let $\der \deff \der^{b}(\lmod{\Lambda})$ denote the bounded derived category of finite-dimensional left $\Lambda$-modules. 
Then $\der$ is a triangulated (=($1+2$)-angulated) category (see \cite[Prop.\ 3.5.40]{Zimmermann-rep-theory-book}) and denote its suspension functor by $[1]$. 
In addition, $\der$ has Auslander-Reiten triangles (see \cite[Thm.\ I.4.6]{Happel-triangulated-cats-in-rep-theory}), and let $\tau^{-1}$ denote a quasi-inverse for the Auslander-Reiten translation (see \cite[p.\ 42]{Happel-triangulated-cats-in-rep-theory}). 
Note that $\tau^{-1}$ is an autoequivalence of $\der$, so the composite endofunctor $\SF\deff\tau^{-1}\circ[1]$ is also an autoequivalence of $\der$.

The \emph{cluster category} of $\Lambda$ is defined to be the orbit category $\CC_{\Lambda}\deff\der/\SF$ (see \cite[p.\ 576]{BMRRT-cluster-combinatorics}). 
Recall that for an orbit category $\CA/\SG$, the functor $\SG$ is required be to an automorphism. 
However, $\SF$ is only an autoequivalence, and not an automorphism, of $\der$. 
Thus, the definition of $\CC_{\Lambda}$ just stated masks some details that we now discuss. 
We first take a skeleton $\der^{\skel}$ of $\der$. 
Then $\SF$ induces an autoequivalence $\SF'$ of $\der^{\skel}$ and, by Proposition \ref{prop:equivalence-between-skeletal-categories-is-an-isomorphism}, $\SF'$ is actually an automorphism. 
(This is the `standard construction' alluded to in \cite[\S1]{Keller-triangulated-orbit-categories}.) 
Hence, we may consider the cluster category of $\Lambda$ to be the orbit category $\der^{\skel}/\SF'$. 
Moreover, $\der^{\skel}$ is triangulated by Corollary \ref{cor:transport-of-n-angulated-structure-to-skeleton}, and we may thus apply \cite[Thm.\ 1]{Keller-triangulated-orbit-categories} to conclude $\der^{\skel}/\SF'$ is triangulated.

\end{example}


\begin{acknowledgements}

Both authors would like to thank Peter J\o{}rgensen, Bernhard Keller and Bethany R.\ Marsh for helpful discussions and for their advice during the preparation of this article. We also thank the referee for reading a previous version of the article and their comments. 

During part of this work the first author was supported by the Alexander von Humboldt Foundation in the framework of an Alexander von Humboldt Professorship endowed by the German Federal Ministry of Education and Research, for which they are grateful. The second author gratefully acknowledges support from the EPSRC (grant EP/P016014/1), and from the London Mathematical Society for funding through an Early Career Fellowship with support from Heilbronn Institute for Mathematical Research. Some of this work was carried out as part of the second author's Ph.D.\ at the University of Leeds. 

\end{acknowledgements}


\bibliographystyle{mybst}
\bibliography{references}
\end{document}